\newtheorem{thm}{Theorem}[section]
\newtheorem*{thm*}{Theorem}
\newtheorem{cor}[thm]{Corollary}
\newtheorem{lem}[thm]{Lemma}
\newtheorem{pro}[thm]{Proposition}
\newtheorem{rem}[thm]{Remark}
\numberwithin{equation}{section}
\date{}
\begin{document}
	
	\title{\bf Exponential mixing for Hamiltonian shear flow}
    \author[1,2]{Weili Zhang\thanks{zhangweili@amss.ac.cn}}
    \affil[1]{Beijing Institute of Mathematical Sciences and Applications, Beijing 101408, China}
    \affil[2]{Yau Mathematical Sciences Center, Tsinghua University, Beijing 100084, China}
    \renewcommand\Authfont{\normalsize}  % 修改作者名的字号
    \renewcommand\Affilfont{\small}             % 修改机构名的字号
    \maketitle
	\noindent{\bf Abstract:} We consider the advection equation on $\mathbb{T}^2$ with a real analytic and time-periodic velocity field that alternates between two Hamiltonian shears. Randomness is injected by alternating the vector field randomly in time between just two distinct shears. We prove that, under general conditions, these models have a positive top Lyapunov exponent and exhibit exponential mixing. This framework is then applied to the Pierrehumbert model with randomized time and to a model analogous to the Chirikov standard map.
	%\smallskip
	\vskip 0.3cm
	\noindent  {\bf MSC: } 35Q49, 37H05, 37A25, 76F25.
	\vskip 0.3cm
	\noindent {\bf Keywords:} Exponential mixing, positive top Lyapunov exponent, Hamiltonian shears.
	
	\section{Introduction}
	\quad Mixing by incompressible flows is a fundamental problem in fluid mechanics with relevance to both engineering and the natural sciences.  In its simplest mathematical setting, this phenomenon is relevant to the study of the long time dynamics of the advection equation. Given a divergence-free and time-dependent velocity field $b:[0,\infty)\times\mathbb{T}^2\to\mathbb{R}^2$, where $\mathbb{T}^2:=\mathbb{R}^2/(2\pi \mathbb{Z})^2$, consider the advection equation on $\mathbb{T}^2$
	\begin{equation}\label{advection equation}
		\begin{cases}
			\partial_t u(t,x)+b(t,x)\cdot \nabla u(t,x) = 0, \\
			u(0,x) = u_0(x).
		\end{cases}
	\end{equation}
	We are interested in how effectively some mean-zero initial data $u_0$ is mixed when advected by $b$. We can measure mixing by the geometric mixing scale in Bressan's sense. More precisely, given a vector field $b\in L^1([0,1]; W^{1,1}(\mathbb{T}^2;\mathbb{R}^2))$, the \textbf{mixing scale} $\text{mix}(b)$ is defined by 
	\begin{equation*}
		\text{mix}(b):=\sup\left\{r(B):\left|\frac{1}{|B|}\int_B u(1,x)dx\right|>1\right\},
	\end{equation*}
	where the supremum is over all balls $B\subseteq \mathbb{T}^2$, $r(B)$ is the radius of $B$ and $u(t,x)$ is the solution to the advection equation (\ref{advection equation}) with initial data $u_0\in L^\infty(\mathbb{T}^{2})$ with mean zero. 
		
	For a given divergence-free velocity field $b$, let $\Psi(t,\cdot) :\mathbb{T}^2\to\mathbb{T}^2$ denote the associated flow map, which solves 
	\begin{equation}\label{Lagrangian equation}
		\begin{cases}
			\frac{d}{dt} \Psi(t, x) = b(t,\Psi(t, x)), \\
			\Psi(0,x) = x.
		\end{cases}
	\end{equation}
	For any Lipschitz $b$, $\Psi(t,\cdot)$ is well defined and is a Lebesgue measure-preserving homeomorphism for each $t\geq 0$. The solution to the pure advection equation (\ref{advection equation}) is then given by 
	\begin{equation*}
		u(t,x)=u_0(\Psi^{-1}(t,x)).
	\end{equation*}
	From the above equation, one can naturally quantify mixing in terms of correlation decay of sufficiently regular observables for the dynamical system on $\mathbb{T}^2$ generated by $\Psi(t,\cdot)$.
	
	Our main goal in this paper is to introduce a time-periodic velocity field that is analytic and whose dynamics we can understand deeply enough to establish exponential mixing. We consider in particular alternating shear field. Let $X_i: \mathbb{T}^2\to \mathbb{R}^2$, $i=1,2$ be the Hamiltonian horizontal and vertical shears, respectively, defined by
	\begin{equation*}
		X_1(x):=(f_1(p),0)\quad \text{and} \quad X_2(x):=(0, f_2(q)),\ \ x=(q,p)
	\end{equation*}
	where $f_i \in C^{\omega}(\mathbb{S}^1, \mathbb{R})$, $i=1,2$ are non-constant, and $\mathbb{S}^1:= \mathbb{R}/(2\pi \mathbb{Z})$. Fix a sufficiently large $T>0$, we then define the divergence-free velocity field $b_{\underline{\tau}}:[0,\infty)\times\mathbb{T}^2\to\mathbb{R}^2$ by
	\[
	b_{\underline{\tau}}(t,x) = 
	\begin{cases} 
		-\tau_{\lceil t \rceil}X_2(x) & \text{if} \ \lceil t \rceil \ \text{is odd}, \\
		-\tau_{\lceil t \rceil}X_1(x) & \text{if} \ \lceil t \rceil \ \text{is even}, 
	\end{cases}
	\]
	where $\underline{\tau}=(\tau_1,\tau_2,\cdots)$ be chosen independently and identically distributed with respect to the uniform distribution on $[0,T]$. Thus our sample space can be written as $[0,T]^\mathbb{N}$, equipped with the probability $\mathbb{P}$. We write $u_{\underline{\tau}}$ to denote the solution to (\ref{advection equation}) with $b=b_{\underline{\tau}}$, and we write $\Psi_{\underline{\tau}}$ to denote the solution to (\ref{Lagrangian equation}) with $b=b_{\underline{\tau}}$.
	
	Denote $F:=\{(q,p):f_1(p)=0 \text{ and } f_2(q)=0\}$. Indeed, $b_{\underline{\tau}}(t,x)=0$ for any $t>0$ and $x\in F$. To deal with this, we consider compositions of random maps on $\mathcal{O}:=\mathbb{T}^2\setminus F$
	\begin{align*}
		\Phi_{\underline{\tau}}^m=
		\begin{cases}
			I, & m=0;\\
			\varphi_{\tau_{2m}}^{(2)}\circ\varphi_{\tau_{2m-1}}^{(1)}(\Phi_{\underline{\tau}}^{m-1}),& m=1,2,\cdots,
		\end{cases}
	\end{align*}    
	where $I$ is the identity and
	\[ 
	\varphi_{\tau}^{(1)}(x) = \begin{pmatrix}
		q+\tau f_1(p) \\
		p
	\end{pmatrix},\quad
	\varphi_{\tau}^{(2)}(x) = \begin{pmatrix}
		q \\
		p+\tau f_2(q)
	\end{pmatrix}.
	\]
	It follows from the independence of the $\tau_i$ that $\{\Phi_{\underline{\tau}}^m\}_{m=0}^{\infty}$ is a Markov chain on $\mathcal{O}$, which we refer to as the one-point Markov chain. Its transition kernel $P:\mathcal{O}\times\mathscr{B}(\mathcal{O})\to [0,1]$ is defined by
	\begin{equation*}
		P(x,A)=\mathbb{P}(\Phi_{\underline{\tau}}(x)\in A)=\mathbb{E}\chi_A(\Phi_{\underline{\tau}}(x))
	\end{equation*}
	where $\mathscr{B}(\mathcal{O})$ is the Borel $\sigma$-algebra on $\mathcal{O}$ and $\chi_A$ is the indicator function of $A$. $P$ acts on measurable functions $g:\mathcal{O}\to \mathbb{R}$ by
	\begin{equation*}
		Pg(x)=\mathbb{E}g(\Phi_{\underline{\tau}}(x)).
	\end{equation*} 
	And $P$ acts on measure $\mu$ on $\mathcal{O}$ by 
	\begin{equation*}
		P^*\mu(A):=\int P(x,A)\mu(dx),  \ \ A\in\mathscr{B}(\mathcal{O}).
	\end{equation*}
	The stationary measure plays an important role in our results on ergodicity and chaos, recall a probability measure $\mu$ on $\mathcal{O}$ is \textbf{stationary} for the Markov chain $\{\Phi_{\underline{\tau}}^m\}$ if $P^*\mu=\mu$. We say that $P$ is \textbf{uniquely ergodic} if the set of stationary measures has cardinality one. Given a function $V: \mathcal{O}\to [1,\infty)$, we define the weighted norm
	\[
	\|g\|_V:=\sup_{x\in \mathcal{O}}\frac{|g(x)|}{V(x)},
	\]
	and define $\mathcal{M}_V(\mathcal{O})$ to be the space of measurable observables $g: \mathcal{O}\to\mathbb{R}$ such that $\|g\|_V<\infty$. We say that $P$ is $V$-\textbf{uniformly geometrically ergodic}, if $P$ admits a unique stationary measure $\mu$ on $\mathcal{O}$, and there exist constants $C>0$ and $\gamma\in (0,1)$ such that the bound 
	\begin{equation*}
		\left |P^m g(x)-\int g d\mu\right |\leq CV(x)\|g\|_V\gamma^m,
	\end{equation*}
	holds for all $x\in \mathcal{O}$ and $g\in \mathcal{M}_V(\mathcal{O})$. 
	Denote $\mathcal{X}=\{X_1, X_2\}$, $C_{f_i}:=\{z\in\mathbb{S}^1:f_i(z)=0\}$, $C_{f_i'}:=\{z\in\mathbb{S}^1:f_i'(z)=0\}$. Assume 
	\begin{itemize}
		\item [(H1)] $C_{f_i}\cap C_{f_i'}=\emptyset$, $i=1, 2$.
	\end{itemize}
	\begin{thm}[\textbf{Uniformly geometrically ergodic}]\label{Uniformly geometrically ergodic}
		Let $\{\Phi_{\underline{\tau}}^m\}$ be as above. Assume $(H1)$ holds. Then there exists a continuous function $V:\mathcal{O}\to [1,\infty)$ such that the transition kernel $P$ of $\{\Phi_{\underline{\tau}}^m\}$ is $V$-uniformly geometrically ergodic.
	\end{thm}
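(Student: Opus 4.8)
The plan is to deduce the statement from the Harris theorem (Meyn--Tweedie / Hairer--Mattingly form): it suffices to construct a continuous $V:\mathcal{O}\to[1,\infty)$ with $V(x)\to+\infty$ as $x\to F$ (so the sublevel sets $\{V\le R\}$ are compact in $\mathcal{O}$), satisfying a geometric drift $P^{n_1}V\le\gamma V+K$ for some integer $n_1$ and constants $\gamma\in(0,1)$, $K<\infty$, together with a minorization $\inf_{x\in\{V\le R\}}P^{N}(x,\cdot)\ge\eta\,\nu$ for a suitable multiple $N$ of $n_1$, some $\eta>0$, a probability measure $\nu$, and $R$ large (in terms of $\gamma,K$). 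Since the $f_i$ are non-constant real-analytic, $C_{f_1}$ and $C_{f_2}$ are finite, so $F=C_{f_2}\times C_{f_1}$ is finite and $\mathcal{O}$ is a punctured torus; I take $V(x)=d(x,F)^{-\alpha}$ on $\{d(x,F)<\rho_0\}$ and interpolate smoothly to a constant $\ge1$ elsewhere, where $d(\cdot,F)$ is the distance to $F$ and $\alpha,\rho_0>0$ are small parameters to be fixed. Granting the two conditions, Harris's theorem gives a unique $P^{N}$-stationary measure $\mu$ and $V$-uniform geometric ergodicity of $P^{N}$; as $\mu$ is then also the unique $P$-stationary measure and $P^{r}V\le C_{r}V$ for $0\le r<N$ (a consequence of the drift ingredients below), the decomposition $m=kN+r$ transfers the bound to $P$ itself, which is the assertion.

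The heart of the proof is the drift in a punctured neighbourhood of $F$. Fix $z_0=(q_0,p_0)\in F$, so $f_1(p_0)=f_2(q_0)=0$; since $\varphi^{(1)}_\tau$ and $\varphi^{(2)}_\tau$ fix $z_0$, the linearisation of $\Phi_{\underline\tau}$ at $z_0$ is
\[
A_1=\begin{pmatrix}1&0\\\tau_2 f_2'(q_0)&1\end{pmatrix}\begin{pmatrix}1&\tau_1 f_1'(p_0)\\0&1\end{pmatrix}=\begin{pmatrix}1&\tau_1 f_1'(p_0)\\\tau_2 f_2'(q_0)&1+\tau_1\tau_2 f_1'(p_0)f_2'(q_0)\end{pmatrix}\in SL_2(\mathbb{R}),
\]
and $D\Phi^{m}_{\underline\tau}(z_0)=M_m=A_m\cdots A_1$ is an i.i.d.\ product. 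By $(H1)$, $f_1'(p_0)\ne0$ and $f_2'(q_0)\ne0$; writing $A_1=PU_s$ with $P=\bigl(\begin{smallmatrix}1&0\\\tau_2 f_2'(q_0)&1\end{smallmatrix}\bigr)$ and $U_s=\bigl(\begin{smallmatrix}1&\tau_1 f_1'(p_0)\\0&1\end{smallmatrix}\bigr)$, the group $G$ generated by the support of the distribution of $A_1$ contains the non-trivial unipotent conjugates $PU_uP^{-1}$ with $u=(\tau_1-\tau_1')f_1'(p_0)\ne0$, whose powers are unbounded, so $G$ is non-compact; it is also strongly irreducible, since a finite $G$-invariant subset of $\mathbb{P}^1$ would have to lie in the fixed-point set $\{[1:\tau_2 f_2'(q_0)]\}$ of such a unipotent conjugate, and these singletons vary with $\tau_2$. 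Furstenberg's theorem then gives $\lambda_1>0$ for $(M_m)$, and $\lambda_1+\lambda_2=0$ makes the top exponent simple; moreover, for $T$ large $G$ contains a proximal (hyperbolic) element, since $\mathrm{tr}\,A_1=2+\tau_1\tau_2 f_1'(p_0)f_2'(q_0)$ exceeds $2$ automatically when $f_1'(p_0)f_2'(q_0)>0$ and drops below $-2$ for $\tau_1\tau_2$ large otherwise. Consequently the standard spectral-gap/large-deviation theory for proximal, strongly irreducible products of random matrices yields an integer $n_1$ and an $\alpha>0$ with $\gamma_0:=\sup_{\|w\|=1}\mathbb{E}\,\|M_{n_1}w\|^{-\alpha}<1$. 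Choosing $\rho_0$ small enough that the first $n_1$ iterates of a point in $B(z_0,\rho_0)$ stay near $z_0$ (with $z_0$ remaining the closest point of $F$), a Taylor expansion gives $d(\Phi^{n_1}_{\underline\tau}(x),F)=\|M_{n_1}(x-z_0)\|\bigl(1+O(d(x,F))\bigr)$, hence $P^{n_1}V(x)\le\gamma_0\bigl(1+O(\rho_0)\bigr)V(x)\le\gamma V(x)$ with $\gamma<1$ there. The randomness is essential --- a deterministic saddle contracts $V$ along its unstable direction --- and it is the averaging over $\underline\tau$ together with $\lambda_1>0$ that makes $F$ a random repeller; largeness of $T$ enters exactly to guarantee the proximality behind the uniform-in-$w$ bound.

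Away from $F$, on the compact set $K'=\mathbb{T}^2\setminus\bigcup_{z_0\in F}B(z_0,\rho_0)$, $V$ is bounded, and I must bound $P^{n_1}V(x)=\mathbb{E}\,d(\Phi^{n_1}_{\underline\tau}(x),F)^{-\alpha}$ uniformly for $x\in K'$. Two facts give this: the Jacobian of $(\tau_1,\tau_2)\mapsto\varphi^{(2)}_{\tau_2}\varphi^{(1)}_{\tau_1}(x)$ equals $f_1(p)\,f_2(q+\tau_1 f_1(p))$, whose two-step analogue is non-vanishing on an open set for every $x\in\mathcal{O}$ (using $(H1)$), so the law of $\Phi^{2}_{\underline\tau}(x)$ has a non-trivial absolutely continuous component; and a standard estimate on the sublevel sets of the real-analytic, non-constant map $\underline\tau\mapsto\Phi^{n_1}_{\underline\tau}(x)$ gives $\mathbb{P}\bigl(d(\Phi^{n_1}_{\underline\tau}(x),F)<\varepsilon\bigr)\le C\varepsilon^{\beta}$ uniformly on $K'$, so the expectation is finite once $\alpha<\beta$. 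Together with the near-$F$ bound this yields $P^{n_1}V\le\gamma V+K$ on all of $\mathcal{O}$, and the same sublevel-set estimate (together with the deterministic bound $\|M_r^{\pm1}\|\lesssim\Lambda^r$ near $F$) supplies the auxiliary bounds $P^{r}V\le C_{r}V$.

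It remains to verify the minorization and conclude. The Jacobian computation above, together with a uniform controllability statement --- for $T$ large and using that $\{X_1,X_2,[X_1,X_2]\}$, with $[X_1,X_2]=(-f_1'f_2,\,f_1 f_2')$, spans $\mathbb{R}^2$ at every point of $\mathcal{O}$ by $(H1)$, any point of a compact subset of $\mathcal{O}$ can be steered in a bounded number of alternating shear steps, with density bounded below, onto one fixed ball --- furnishes $\inf_{x\in\{V\le R\}}P^{n_0}(x,\cdot)\ge\eta\nu$; since minorization persists at all larger times, taking $N$ a multiple of $n_1$ with $N\ge n_0$, iterating the drift to the $N$-th power (which only enlarges the additive constant, uniformly in $N$), fixing $R$ large in terms of $\gamma$ and $K$, and applying Harris's theorem to $P^{N}$ on $\{V\le R\}$ completes the proof as in the first paragraph. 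I expect the main obstacle to be the second paragraph: establishing that the linearisations at the punctures generate non-compact, strongly irreducible and (for $T$ large) proximal subgroups of $SL_2(\mathbb{R})$ --- precisely where $(H1)$ and the size of $T$ are genuinely used --- and then promoting Furstenberg positivity to the uniform bound $\gamma_0<1$ and descending it through the nonlinear error to $\Phi^{n_1}_{\underline\tau}$; the controllability/minorization and the sublevel-set estimate are more routine, but need care to be made uniform over compact subsets of $\mathcal{O}$.
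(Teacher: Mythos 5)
Your proposal is correct in outline and gives the same scaffolding (drift plus minorization, with controllability and Hörmander-type bracket conditions supplying the minorization), but the crucial step — the Lyapunov--Foster drift near the fixed-point set $F$ — is carried out by a genuinely different argument than the paper's. The paper (Proposition~\ref{Lyapunov-Foster drift condition}, modeled on \cite[Lemma~13]{Co}) works with $V(q,p)=\max\bigl(d(q,C_{f_2}),d(p,C_{f_1})\bigr)^{-\beta}+b$ and proves a \emph{one-step} drift $PV\le\alpha V$ on a punctured neighbourhood of $F$ by an entirely elementary case analysis: conditioning on whether the horizontal shear moves the $q$-coordinate to a ``good'' region, and balancing the small probability of bad events against a bounded loss using the comparability $K^{-1}d(\cdot,C_{f_i})\le|f_i|\le Kd(\cdot,C_{f_i})$ that follows from $(H1)$; the largeness of $T$ is used only to make this arithmetic close. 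You instead linearize at each puncture $z_0\in F$, observe that the derivative process $(M_m)$ is an i.i.d.\ product of the $SL_2(\mathbb{R})$ matrices $A(\tau_1,\tau_2)$, verify strong irreducibility and (for $T$ large) proximality of the generated semigroup, and invoke the Le Page spectral-gap/large-deviation theory to obtain $\sup_{\|w\|=1}\mathbb{E}\|M_{n_1}w\|^{-\alpha}<1$, then push this $n_1$-step linear contraction through the Taylor error to the nonlinear flow. This is a legitimate and even illuminating route --- it exposes the repulsion from $F$ as a local positive-Lyapunov-exponent statement, which is conceptually close to what the paper does globally in Section~3 --- but it is also heavier machinery, and it imports nonessential requirements: the paper never needs proximality or the Le Page apparatus for Theorem~\ref{Uniformly geometrically ergodic}, and its $T$-largeness hypothesis is not tied to producing a hyperbolic element in a linearized group.

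A few places where your write-up would need shoring up if carried out in full. First, the ``away from $F$'' bound: you propose a uniform Łojasiewicz-type sublevel estimate $\mathbb{P}\bigl(d(\Phi^{n_1}_{\underline\tau}(x),F)<\varepsilon\bigr)\le C\varepsilon^\beta$ uniformly over a compact $K'\subset\mathcal{O}$, but with the paper's explicit $V$ a direct two-step argument (either $p$ stays far from $C_{f_1}$, or $q$ does after the horizontal shear, using $|f_i|\le K\,d(\cdot,C_{f_i})$) gives boundedness of $PV$ on $K'$ without any parametric analyticity lemma; the Łojasiewicz route is workable but far more than is needed and its uniformity-in-$x$ is itself a nontrivial claim. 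Second, in the near-$F$ drift the logical order of quantifiers matters: $n_1$ and $\alpha$ come from Le Page, then $\rho_0$ must be chosen small depending on $n_1$, $T$, and the $C^2$-norms of $f_1,f_2$ so that both the iterates stay near $z_0$ \emph{and} the relative Taylor error is dominated by the lower bound $\|M_{n_1}(x-z_0)\|\ge\|M_{n_1}^{-1}\|^{-1}\|x-z_0\|$; it is worth stating this explicitly because the entries of $M_{n_1}$ can be as large as $(CT^2)^{n_1}$. Third, you should note that the law of $A(\tau_1,\tau_2)$ is supported on a two-dimensional surface in $SL_2(\mathbb{R})$; the Le Page theory is still applicable (it requires moments, strong irreducibility, proximality, not absolute continuity), but this deserves a sentence. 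Finally, the paper actually avoids the classical Harris minorization-on-a-sublevel-set formulation in favour of \cite[Theorem~2.3]{BCZG} (Feller $+$ open small set $+$ topological irreducibility $+$ strong aperiodicity $+$ drift), and obtains the small set from the Lie-bracket condition $\mathrm{Lie}_x(\mathcal{X})=T_x\mathcal{O}$ via Lemmas~\ref{surjective}--\ref{small set} and irreducibility from the explicit controllability Lemma~\ref{irreducibility}; your minorization paragraph is the same content in different packaging.
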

	
	The lift of $\{\Phi_{\underline{\tau}}^m\}$ to the tangent bundle $T\mathcal{O}$ is the Markov chain $\{\hat{\Phi}_{\underline{\tau}}^m\}$ with transition kernel $\hat{P}((x,u),\hat{A})=\mathbb{P}(\hat{\Phi}_{\underline{\tau}}(x,u)\in\hat{A})$, its corresponding vector fields are $\hat{\mathcal{X}}=\{\hat{X_1}, \hat{X_2}\}$, where for $(x,u)\in T\mathcal{O}$,
	\begin{equation*}
		\hat{\Phi}_{\underline{\tau}}^m(x,u)=\left(\Phi_{\underline{\tau}}^m(x), D_x\Phi_{\underline{\tau}}^m u\right),
	\end{equation*}
	and
	\[ 
	\hat{X_i}(x,u) = \begin{pmatrix}
		X_i(x) \\
		DX_i(x)u
	\end{pmatrix},\quad 
    i=1, 2.
	\]
	Assume
	\begin{itemize}
		\item[(H2)] $\text{Lie}_{\hat{x}}(\hat{\mathcal{X}})=T_{\hat{x}}T\mathcal{O}$ for some ${\hat{x}}\in T\mathcal{O}$.
	\end{itemize}
	
	\begin{thm}[\textbf{Positivity of the top Lyapunov exponent}]\label{Positivity of the top Lyapunov exponent}
		Let $\{\Phi_{\underline{\tau}}^m\}$ be as above. Assume $(H1)$ and $(H2)$ hold. Then the random dynamical system $\{\Phi_{\underline{\tau}}^m\}$ almost surely has a positive top Lyapunov exponent 
		\begin{equation*}
			\lambda_1:=\lim_{m\to\infty}\frac{1}{m}\log \|D_x\Phi_{\underline{\tau}}^m(x)\|
		\end{equation*}
		for $\mu$-a.e. $x\in\mathcal{O}$.
	\end{thm}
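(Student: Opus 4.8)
\emph{Sketch of proof.} The plan is to run a Furstenberg-type argument adapted to this random dynamical system: volume preservation reduces positivity of $\lambda_1$ to excluding the vanishing of the top exponent; Furstenberg's criterion converts that vanishing into a measurable $D\Phi$-invariant direction field; and the hypoellipticity coming from $(H2)$ destroys such a field. Concretely, each shear $\varphi_\tau^{(i)}$ has unit Jacobian, so $\det D_x\Phi_{\underline{\tau}}^m\equiv 1$ and $\lambda_1+\lambda_2=0$; since $f_i$ and $f_i'$ are bounded on $\mathbb S^1$, $\mathbb E\log^+\|D_x\Phi_{\underline{\tau}}^1\|<\infty$ and Kingman's subadditive ergodic theorem applies. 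By Theorem~\ref{Uniformly geometrically ergodic} the one-point chain has a \emph{unique} stationary measure $\mu$, so the associated skew product over the Bernoulli shift, with invariant measure $\mathbb P\otimes\mu$, is ergodic; hence $\frac1m\log\|D_x\Phi_{\underline{\tau}}^m(x)\|$ converges for $\mathbb P$-a.e.\ $\underline{\tau}$ and $\mu$-a.e.\ $x$ to the deterministic constant $\lambda_1=-\lambda_2\ge 0$. It remains to exclude $\lambda_1=0$.

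\emph{From $\lambda_1=0$ to an invariant line field.} Suppose $\lambda_1=\lambda_2=0$. Projectivising $\{\hat\Phi_{\underline{\tau}}^m\}$ gives a Markov chain on the projective bundle $\mathcal O\times\mathbb{RP}^1$, which by Krylov--Bogolyubov (compactness of $\mathbb{RP}^1$, tightness of $\mu$) carries an ergodic stationary measure $\hat\mu$ projecting to $\mu$. Furstenberg's formula identifies $\int\mathbb E\big[\log(\|D_x\Phi_{\underline{\tau}}v\|/\|v\|)\big]\,d\hat\mu(x,[v])$ with one of $\lambda_1,\lambda_2$, hence it vanishes; and since these integrals have maximum $\lambda_1$ and minimum $\lambda_2$ over ergodic stationary measures, all of them vanish. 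By the Furstenberg--Ledrappier invariance principle the disintegration $\hat\mu=\int\hat\mu_x\,d\mu(x)$ is then equivariant, $(D_x\Phi_{\underline{\tau}})_*\hat\mu_x=\hat\mu_{\Phi_{\underline{\tau}}(x)}$ for a.e.\ $(\underline{\tau},x)$, and each $\hat\mu_x$ is of the degenerate Furstenberg type; passing to a double cover if necessary (and deferring the ``conjugate to rotations'' alternative to the next step) we obtain a measurable $D\Phi$-invariant line field $E:\mathcal O\to\mathbb{RP}^1$. Writing one step as $\Phi_{\underline{\tau}}=\varphi_{\tau_2}^{(2)}\circ\varphi_{\tau_1}^{(1)}$, a Fubini argument in $(\tau_1,\tau_2)\in[0,T]^2$ combined with shift-invariance of $\hat\mu$ upgrades this to invariance of $E$ under each $D\varphi_\tau^{(i)}$ for Lebesgue-a.e.\ $\tau\in[0,T]$; since $\varphi_\tau^{(i)}$ is the time-$\tau$ map of the flow of $X_i$, $E$ is invariant under the flows of both $X_1$ and $X_2$.

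\emph{Contradiction with $(H2)$.} By $(H2)$ the Lie algebra of the lifts $\hat X_1,\hat X_2$ spans $T_{\hat x}T\mathcal O$ at some $\hat x$, which descends to bracket-generation of the projectivised vector fields at $[\hat x]\in\mathcal O\times\mathbb{RP}^1$. Since the $f_i$ are real analytic, for $m$ large the map $(\tau_1,\dots,\tau_{2m})\mapsto[\hat\Phi_{\underline{\tau}}^m([\hat x])]$ is a submersion onto an open set for an open set of parameters (a Krener/orbit-theorem argument, using that the time increments fill the interval $[0,T]$), so $\hat P^m$ is absolutely continuous on an open subset of the $3$-manifold $\mathcal O\times\mathbb{RP}^1$; together with irreducibility this forces $\hat\mu$ to have a nontrivial absolutely continuous component. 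This contradicts the previous step, where $\hat\mu$ has atomic fibres and is therefore carried by the graph of $E$, a Lebesgue-null set. (Equivalently, the same regularity makes $E$ agree a.e.\ on an open set with an analytic section; its invariance under the flows of $X_1$ and $X_2$ then forces invariance under $[X_1,X_2]$ and all iterated brackets, so by $(H2)$ a nonzero fibre-direction at $\hat x$ would both preserve the line $E(\pi\hat x)$ and complete it to a frame of $T_{\pi\hat x}\mathcal O$ --- impossible.) Hence $\lambda_1>0$, and by the reductions above this holds $\mathbb P$-a.s.\ for $\mu$-a.e.\ $x$.

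\emph{Expected main obstacle.} The crux is the last step: turning the \emph{pointwise} Hörmander condition $(H2)$ --- for a random walk whose increments are merely uniform on $[0,T]$ rather than a genuine diffusion --- into absolute continuity of $\hat\mu$ (or analytic regularity of $E$) requires a careful hypoellipticity/control-theoretic argument that must also control the behaviour near $F=\partial\mathcal O$ using the analyticity of the $f_i$ together with $(H1)$. By comparison, the ``factoring out $\tau_1$'' reduction is routine but slightly delicate, since $\{\tau_1=0\}$ is a null event and cannot simply be substituted.
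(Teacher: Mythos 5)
Your overall strategy mirrors the paper's: use volume preservation to reduce to excluding $\lambda_1=0$, invoke a Furstenberg/Baxendale-type invariance principle to derive a degenerate invariant structure from $\lambda_1=0$, and then use a hypoellipticity-type consequence of $(H2)$ to produce a contradiction. The paper formalises the invariance-principle step via Baxendale's Theorem~6.8, which yields \emph{two} alternatives when $2\lambda_1=\lambda_\Sigma$: (a) an invariant Riemannian (conformal) structure, i.e.\ the ``conjugate to rotations'' case, and (b) a finite invariant family of proper linear subspaces, i.e.\ your atomic-fibre case. The paper then rules out (b) exactly along the lines you sketch: $(H2)$ plus analyticity gives a local strong Feller property for a power of the projectivised/lifted kernel (the paper's Theorem~\ref{Strong Feller}, proved via a coarea/submersion argument with a careful handling of the zero set of the Jacobian using real analyticity), and the indicator of the invariant subspace family would have to be both fixed by $\hat P^{m_0}$ and continuous, a contradiction. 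Your absolute-continuity-of-$\hat\mu$ version is a cosmetic variant of this and, if executed carefully, would work for case~(b).

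The genuine gap is case~(a). You write that you are ``deferring the conjugate-to-rotations alternative to the next step,'' but your next step does not address it: in the conformal case the conditional measures $\hat\mu_x$ on $\mathbb{RP}^1$ are the rotation-invariant (hence absolutely continuous) measures, so the observation that $\hat\mu$ has a nontrivial absolutely continuous component is perfectly consistent with it, and your dichotomy ``$\hat\mu$ abs.\ cont.\ vs.\ $\hat\mu$ carried by a Lebesgue-null graph'' collapses. Likewise, the alternative phrasing in your parenthetical (an invariant \emph{line field} agreeing a.e.\ with an analytic section) does not apply: in the conformal case there is no invariant line field at all. The paper rules out (a) by a direct structural computation, not by hypoellipticity: it picks a point where $f_1'\neq 0$, uses that the horizontal shear returns to that point at three distinct times $t_1,t_2,t_3\in[0,T]$, and observes that an invariant metric $g_{x_0}$ would have to satisfy a nontrivial quadratic identity in $t_k$ with three roots (Lemma~\ref{ruling out a}). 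Some version of this shearing argument — exploiting the unipotent, upper-triangular form of $D\varphi_t^{(1)}$ — is genuinely needed; it cannot be replaced by the absolute-continuity argument you use for (b). So you should add a separate step handling the conformal alternative, and in exchange you can drop the ``double cover'' remark, which only concerns case~(b).

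Two smaller points. First, the claim that Furstenberg's integral ``is one of $\lambda_1,\lambda_2$'' is a slight overstatement: for a general ergodic stationary measure it lies in $[\lambda_2,\lambda_1]$ and the conclusion you want (all such integrals vanish) follows because the interval collapses; worth stating carefully. Second, your ``expected main obstacle'' correctly identifies the need for a hypoellipticity/control argument adapted to uniform-in-$[0,T]$ increments rather than a diffusion, and for control near $F=\partial\mathcal O$; the paper does exactly this via Lemmas~\ref{surjective}--\ref{submersion for almost all time}, Theorem~\ref{Strong Feller}, and the weighted ($V$-)geometric ergodicity of the one-point chain (Theorem~\ref{Uniformly geometrically ergodic}) to obtain a \emph{continuous} version of $x\mapsto\nu_x$ on $\text{Supp}(\mu)$ (Lemma~\ref{RCPD}), which is the technical input Baxendale's theorem requires in this non-compact setting.
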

	 The two-point chain is the Markov chain $\{\tilde{\Phi}_{\underline{\tau}}^m\}$ on $\mathcal{O}^{(2)}:=\mathcal{O}\times \mathcal{O}\setminus \Delta$ ($\Delta$ is the invariant set of $\tilde{\Phi}_{\underline{\tau}}$, i.e., $\tilde{\Phi}_{\underline{\tau}}\Delta=\Delta$ for every $\underline{\tau}$) with transition kernel $P^{(2)}((x,y),A^{(2)})=\mathbb{P}(\tilde{\Phi}_{\underline{\tau}}(x,y)\in A^{(2)})$, its corresponding vector fields are $\tilde{\mathcal{X}}=\{\tilde{X_1}, \tilde{X_2}\}$, where for $(x,y)\in\mathcal{O}^{(2)}$,
	 \begin{equation*}
	 	\tilde{\Phi}_{\underline{\tau}}^m(x,y)=(\Phi_{\underline{\tau}}^m(x), \Phi_{\underline{\tau}}^m(y)),
	 \end{equation*}
	 and 
	 \[ 
	 \tilde{X_i}(x,y) = \begin{pmatrix}
	 	X_i(x) \\
	 	X_i(y)
	 \end{pmatrix},\quad
	 i=1, 2.
	 \] 
	 Assume
	\begin{itemize}
		\item[(H3)] $\text{Lie}_{\tilde{x}}(\tilde{\mathcal{X}})=T_{\tilde{x}}\mathcal{O}^{(2)}$ for some ${\tilde{x}}\in \mathcal{O}^{(2)}$.
	\end{itemize}
    To make our notation match with the mixing scale, we write $b_{\underline{\tau}}^\varrho(t,x)=\varrho b_{\underline{\tau}}(\varrho t,x)$ to denote the speeding time up by a factor of $\varrho$, and we write $u_{\underline{\tau}}$ to denote the solution to (\ref{advection equation}) with $b=b_{\underline{\tau}}$.
	\begin{thm}[\textbf{Exponential mixing}]\label{Exponential mixing}
		Let $b_{\underline{\tau}}^\varrho$ be as above. Assume $(H1)$, $(H2)$ and $(H3)$ hold. Then there is a random variable $\xi$ which is positive almost surely, such that 
		\[
		|\log \text{mix}(b_{\underline{\tau}}^\varrho)|\geq \xi(\underline{\tau})\|D_xb_{\underline{\tau}}^\varrho\|_{L^1([0,1]\times \mathbb{T}^{2})},
		\]
		for any integer $\varrho>0$ almost surely.
	\end{thm}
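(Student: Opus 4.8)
The plan is to route the argument through the two-point chain. I would first establish that $\{\tilde{\Phi}_{\underline{\tau}}^m\}$ is geometrically ergodic on $\mathcal{O}^{(2)}$ with stationary measure $\mu\otimes\mu$, convert this into an annealed bound for the decay of correlations of bounded observables along $\Phi_{\underline{\tau}}$, upgrade it to an almost sure decay of the negative Sobolev norm of the advected scalar by a Chebyshev--Borel--Cantelli argument, and finally translate $H^{-s}$-smallness into smallness of the geometric mixing scale. Two preliminary remarks make the bookkeeping transparent. First, since each $f_i$ is real-analytic and non-constant, $C_{f_i}$ is finite, hence $F=C_{f_2}\times C_{f_1}$ is finite and $\mathcal{O}=\mathbb{T}^2\setminus F$ has full Lebesgue measure; as the shears $\varphi^{(i)}_\tau$ preserve Lebesgue measure and $F$ is invariant and Lebesgue-null, normalized Lebesgue measure on $\mathcal{O}$ is stationary, so by Theorem~\ref{Uniformly geometrically ergodic} it equals $\mu$. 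Second, up to a relabelling of the i.i.d.\ parameters allowed by the $X_1\leftrightarrow X_2$ and $\tau\mapsto-\tau$ symmetries of $(H1)$--$(H3)$, the time-$\varrho$ flow map $\Psi_{\underline{\tau}}(\varrho,\cdot)$ of $b_{\underline{\tau}}$ is a composition of $\varrho$ alternating shears, so $u_{\underline{\tau}}^\varrho(1,\cdot)=u_0\circ\Psi_{\underline{\tau}}(\varrho,\cdot)^{-1}$ agrees with $u_0\circ(\Phi_{\underline{\tau}}^{\lfloor\varrho/2\rfloor})^{-1}$ up to at most one further shear, while from $b_{\underline{\tau}}^\varrho(t,x)=\varrho b_{\underline{\tau}}(\varrho t,x)$,
\[
\|D_xb_{\underline{\tau}}^\varrho\|_{L^1([0,1]\times\mathbb{T}^2)}=\sum_{j=1}^{\varrho}\tau_j\,\|DX_{\sigma(j)}\|_{L^1(\mathbb{T}^2)}\le\varrho\,T\max_i\|DX_i\|_{L^1(\mathbb{T}^2)},
\]
where $\sigma(j)\in\{1,2\}$ alternates. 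Thus an exponential-in-$\varrho$ upper bound on $\text{mix}(b_{\underline{\tau}}^\varrho)$ is exactly a linear-in-$\|D_xb_{\underline{\tau}}^\varrho\|_{L^1}$ lower bound on $|\log\text{mix}(b_{\underline{\tau}}^\varrho)|$.

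For the geometric ergodicity of the two-point chain I would run a Harris (Hairer--Mattingly) scheme. The core ingredient is a continuous Lyapunov function $\tilde{V}\colon\mathcal{O}^{(2)}\to[1,\infty)$ growing like $d(x,y)^{-\beta}$ as $(x,y)\to\Delta$, with $\beta>0$ small enough that $\tilde{V}\in L^1(\mathbb{T}^2\times\mathbb{T}^2)$ (the diagonal being codimension two), and like $V(x)V(y)$ (with $V$ from Theorem~\ref{Uniformly geometrically ergodic}) near $(F\times\mathcal{O})\cup(\mathcal{O}\times F)$, for which one verifies a drift bound $P^{(2)}\tilde{V}\le\lambda\tilde{V}+C_0$ with $\lambda\in(0,1)$. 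Away from $\Delta$ and $F$ the drift is that of the product chain. Near $\Delta$ the separation of nearby points is governed by the tangent cocycle, so the drift of $d(x,y)^{-\beta}$ amounts to showing that, for $\beta$ small, the operator $g\mapsto\mathbb{E}\big[\,\|D_x\Phi_{\underline{\tau}}v\|^{-\beta}g(\hat{\Phi}_{\underline{\tau}}(x,v))\,\big]$ on the projectivised chain has spectral radius $<1$; this follows from the Furstenberg--Khasminskii identity $\int\log\|D_x\Phi_{\underline{\tau}}v\|\,d\hat{\mu}=\lambda_1>0$ (Theorem~\ref{Positivity of the top Lyapunov exponent}) together with unique ergodicity of the projectivised chain, a byproduct of the proof of that theorem which uses $(H2)$. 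Minorization on the compact sublevel sets of $\tilde{V}$ comes from $(H3)$: by H\"ormander's theorem and the support theorem, $\tilde{\Phi}_{\underline{\tau}}$ has a smooth transition density and is topologically irreducible on $\mathcal{O}^{(2)}$, hence satisfies a local Doeblin condition there. Harris' theorem then yields a unique stationary probability $\mu^{(2)}$ with $\|(P^{(2)})^mG-\int G\,d\mu^{(2)}\|_{\tilde{V}}\le C\gamma^m\|G\|_{\tilde{V}}$; as the marginals of $\mu^{(2)}$ are one-point stationary, hence equal to $\mu$, and $(H3)$ forces uniqueness, $\mu^{(2)}=\mu\otimes\mu$. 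I expect this step --- precisely the near-diagonal drift estimate, i.e.\ turning the positive top Lyapunov exponent into a genuine contraction of $d(x,y)^{-\beta}$ uniformly in the projective direction --- to be the main obstacle.

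Granting this, fix mean-zero $u_0\in L^\infty(\mathbb{T}^2)$ and $s>1$, and expand $\|u_0\circ\Phi_{\underline{\tau}}^{-m}\|_{H^{-s}}^2$ in the Fourier modes $e_k=e^{ik\cdot x}$. Since $\Phi_{\underline{\tau}}$ preserves Lebesgue measure, a change of variables gives $\widehat{u_0\circ\Phi_{\underline{\tau}}^{-m}}(k)=\tfrac{1}{4\pi^2}\int u_0\,\overline{e_k\circ\Phi_{\underline{\tau}}^m}\,dx$; the $k=0$ mode vanishes by mean-zeroness, and for $k\neq0$, using that $\int\int u_0(x)\overline{u_0(y)}\,dx\,dy=0$,
\[
\mathbb{E}|\widehat{u_0\circ\Phi_{\underline{\tau}}^{-m}}(k)|^2=\tfrac{1}{16\pi^4}\int\int u_0(x)\overline{u_0(y)}\Big((P^{(2)})^m(e_{-k}\otimes e_k)(x,y)-\langle e_{-k}\rangle\langle e_k\rangle\Big)\,dx\,dy,
\]
which, by the drift bound of the previous step together with $\|e_{-k}\otimes e_k\|_{\tilde{V}}\le1$ and $\tilde{V}\in L^1(\mathbb{T}^2\times\mathbb{T}^2)$, is $\le C\gamma^m\|u_0\|_\infty^2$ uniformly in $k$. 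Summing against $(1+|k|^2)^{-s}$ yields $\mathbb{E}\|u_0\circ\Phi_{\underline{\tau}}^{-m}\|_{H^{-s}}^2\le C_s\gamma^m\|u_0\|_\infty^2$. By Chebyshev's inequality $\mathbb{P}(\|u_0\circ\Phi_{\underline{\tau}}^{-m}\|_{H^{-s}}^2>\gamma^{m/2}\|u_0\|_\infty^2)\le C_s\gamma^{m/2}$, which is summable in $m$, so Borel--Cantelli furnishes an almost surely finite $K(\underline{\tau})$ with $\|u_0\circ\Phi_{\underline{\tau}}^{-m}\|_{H^{-s}}\le K(\underline{\tau})\gamma^{m/4}\|u_0\|_\infty$ for all $m$ (for the finitely many exceptional $m$ one uses the isometry bound $\|u_0\circ\Phi_{\underline{\tau}}^{-m}\|_{H^{-s}}\le\|u_0\|_{L^2}$).

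Finally I would invoke the elementary deterministic fact that, for mean-zero $v\in L^\infty(\mathbb{T}^2)$ and $s>0$, $\text{mix}(v)\le C_s\|v\|_\infty^{a(s)}\|v\|_{H^{-s}}^{1/(s+1)}$ for some $a(s)>0$: one estimates $|\frac{1}{|B_r|}\int_{B_r}v|$ by replacing $\chi_{B_r}$ with a mollification at scale $\delta$, bounding the mollification error by $\|v\|_\infty\delta/r$ and the smoothed term by $\|v\|_{H^{-s}}r^{-3/2}\delta^{(1-2s)/2}$, and optimising in $\delta$. Applying this with $v=u_{\underline{\tau}}^\varrho(1,\cdot)=u_0\circ(\Phi_{\underline{\tau}}^{\lfloor\varrho/2\rfloor})^{-1}$ (the harmless extra shear changing neither $\|\cdot\|_\infty$ nor mean, and being absorbed into $K$) together with the previous step gives $\text{mix}(b_{\underline{\tau}}^\varrho)\le K'(\underline{\tau})\gamma^{c\varrho}$ for a deterministic $c>0$, hence $|\log\text{mix}(b_{\underline{\tau}}^\varrho)|\ge c'\varrho-C'(\underline{\tau})$. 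For $\varrho$ above a random threshold $\varrho_0(\underline{\tau})$ this is $\ge\tfrac{c'}{2}\varrho\ge\tfrac{c'}{2T\max_i\|DX_i\|_{L^1}}\|D_xb_{\underline{\tau}}^\varrho\|_{L^1}$ by the displayed bound of the first paragraph; for the finitely many $\varrho<\varrho_0(\underline{\tau})$, $\|D_xb_{\underline{\tau}}^\varrho\|_{L^1}$ is bounded and $|\log\text{mix}(b_{\underline{\tau}}^\varrho)|>0$ almost surely (since $T$ is large, even one shear step with a typical parameter already mixes at scale $<1$), so the inequality persists after shrinking the constant. Taking $\xi(\underline{\tau})$ to be the minimum of $\tfrac{c'}{2T\max_i\|DX_i\|_{L^1}}$ and the finitely many ratios $|\log\text{mix}(b_{\underline{\tau}}^\varrho)|/\|D_xb_{\underline{\tau}}^\varrho\|_{L^1}$ over $\varrho<\varrho_0(\underline{\tau})$ gives a random variable that is positive almost surely and satisfies the asserted inequality for every integer $\varrho>0$.
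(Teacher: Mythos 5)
You identify the right master ingredient --- geometric ergodicity of the two-point chain on $\mathcal{O}^{(2)}$ together with positivity of $\lambda_1$ --- and your overall architecture is sound. However, the route you take from the two-point estimate to the mixing-scale bound is genuinely different from the paper's: after obtaining $\sup_k\mathbb{E}|\widehat{u_0\circ\Phi^{-m}}(k)|^2\lesssim\gamma^m$, you sum against $(1+|k|^2)^{-s}$ to get annealed decay of $\|u_0\circ\Phi^{-m}\|_{H^{-s}}$, run Borel--Cantelli, and then convert negative-Sobolev smallness into geometric mixing scale smallness by a deterministic mollification/interpolation inequality. The paper's Proposition \ref{sup mix time} instead works directly with the ball averages $\tfrac{1}{|B|}\int_B g(\Phi^m_{\underline{\tau}}(x))dx$: Chebyshev plus the two-point decay bounds $\mathbb{P}\{|\text{avg}|>1/3\}$, one then covers $\mathbb{T}^2$ by $O(r^{-2})$ balls of radius $r/2$, union-bounds over $r=1/i$, and bounds $\mathbb{P}\{\eta\geq k\}$, where $\eta=\sup_{m,B}m/|\log r(B)|$ over the bad pairs. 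The paper's route avoids the interpolation lemma and, more importantly, produces in one stroke a statement uniform in both $m$ and $B$, so Theorem \ref{Exponential mixing} follows by a single division $|\log\text{mix}(b^\varrho_{\underline{\tau}})|\geq\varrho/\eta(\underline{\tau})$. By contrast, your final paragraph needs a separate patch for ``the finitely many $\varrho<\varrho_0(\underline{\tau})$,'' where the justification ``one shear step already mixes at scale $<1$ a.s.''\ is not obviously true (for $\tau_1,\tau_2$ both near zero the flow barely moves), and the paper sidesteps this entirely.

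The more substantial gap is in the step you yourself flag as ``the main obstacle,'' namely geometric ergodicity of the two-point chain. Your sketch takes the invariant set $\Delta$ to be the diagonal, with Lyapunov function blowing up like $d(x,y)^{-\beta}$. In the general Hamiltonian-shear setting of the paper this is wrong: the paper computes $\Delta=\bigcup_{k=1}^4\mathcal{I}_k$, a finite union of two-dimensional invariant submanifolds of $\mathcal{O}\times\mathcal{O}$ encoding period and axis-of-symmetry relations of $f_1,f_2$ (not merely $\{y=x\}$ --- e.g.\ for $f_i=\sin$ the Pierrehumbert case has four such translates/reflections, written out explicitly at the end of Section 5). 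The Lyapunov function $W$ in Proposition \ref{VV} is correspondingly built as a maximum $\max_k W_k(x,y)=\max_k W_0(x(k),y)$ over all components, not just $W_0(x,y)=d(x,y)^{-h}w(x,y)$; a function singular only on the diagonal cannot satisfy the drift inequality on a neighborhood of $\Delta$. Your Fourier computation, which tacitly replaces $\mathcal{O}^{(2)}$ by $\mathbb{T}^2\times\mathbb{T}^2$ minus the diagonal and needs $V^{(2)}\in L^1$, would inherit the same issue: integrability of $V^{(2)}$ over $\mathcal{O}\times\mathcal{O}$ holds because each $\mathcal{I}_k$ has codimension two and the exponents $h,\beta$ are chosen small, not just because the diagonal is codimension two. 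Incorporating the full structure of $\Delta$ is precisely what the paper flags as its departure from \cite{BCZG,Co}, and without it your proposal does not close.

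Two smaller points. The identification $\mu^{(2)}=\mu\otimes\mu$ follows because product Lebesgue is itself stationary for $\tilde{\Phi}_{\underline{\tau}}$ (each shear preserves Lebesgue) and then by uniqueness; ``marginals equal $\mu$ plus uniqueness'' is not by itself a valid deduction. And the paper does not obtain the minorization from H\"ormander-type smoothing but from the submersion/coarea argument of Lemmas \ref{surjective}--\ref{small set} combined with exact controllability (Lemma \ref{topologically irreducible for two-point chain}); the two mechanisms are morally similar but your phrasing overstates what is needed.
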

	The Pierrehumbert model with randomized time is defined by setting
	\begin{equation*}
		f_1(p)=\sin(p)\quad \text{and} \quad f_2(q)=\sin(q).
	\end{equation*}
	\begin{cor}[\textbf{The Pierrehumbert model with randomized time}]\label{The Pierrehumbert model with randomized time}
		Let $\{\Phi_{\underline{\tau}}^m\}$, and $b_{\underline{\tau}}^\varrho$ be as above with $f_1(p):=\sin(p)$ and $f_2(q):=\sin(q)$. Then Theorem \ref{Uniformly geometrically ergodic}, Theorem \ref{Positivity of the top Lyapunov exponent}, and Theorem \ref{Exponential mixing} hold for the Pierrehumbert model with randomized time.
	\end{cor}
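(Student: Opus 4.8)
The strategy is to deduce Corollary~\ref{The Pierrehumbert model with randomized time} from Theorems~\ref{Uniformly geometrically ergodic}, \ref{Positivity of the top Lyapunov exponent} and \ref{Exponential mixing} by verifying, for the shear profiles $f_1(p)=\sin p$ and $f_2(q)=\sin q$, the three structural hypotheses $(H1)$, $(H2)$ and $(H3)$; once these are in place the three theorems apply verbatim and yield the corollary. Hypothesis $(H1)$ is immediate: for $f_i=\sin$ one has $C_{f_i}=\{0,\pi\}$ and $C_{f_i'}=\{\pi/2,3\pi/2\}$, and these two finite sets are disjoint, so $C_{f_i}\cap C_{f_i'}=\emptyset$ for $i=1,2$.

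For $(H2)$, in coordinates $(q,p,u_1,u_2)$ on the four-dimensional manifold $T\mathcal{O}$ the lifted vector fields are
\[
\hat X_1=\sin p\,\partial_q+u_2\cos p\,\partial_{u_1},\qquad
\hat X_2=\sin q\,\partial_p+u_1\cos q\,\partial_{u_2}.
\]
The plan is to compute the iterated brackets $W:=[\hat X_1,\hat X_2]$ and $[\hat X_2,W]$ — one finds that $W$ has $\partial_q$- and $\partial_p$-components $-\sin q\cos p$ and $\sin p\cos q$ together with fiber components linear in $(u_1,u_2)$ — and then to exhibit a base point $\hat x=(q,p,u_1,u_2)\in T\mathcal{O}$, for instance $q=p=\pi/4$ with $u=(1,2)$, so that $\hat x\in T\mathcal{O}$, $u\neq 0$, and none of $\sin p,\cos p,\sin q,\cos q$ vanishes, at which the $4\times4$ matrix with rows $\hat X_1,\hat X_2,W,[\hat X_2,W]$ is nonsingular. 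This gives $\text{Lie}_{\hat x}(\hat{\mathcal X})=T_{\hat x}T\mathcal{O}$, which is all that $(H2)$ demands.

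For $(H3)$, in coordinates $(q_1,p_1,q_2,p_2)$ on the four-dimensional manifold $\mathcal{O}^{(2)}$ the two-point vector fields are
\[
\tilde X_1=\sin p_1\,\partial_{q_1}+\sin p_2\,\partial_{q_2},\qquad
\tilde X_2=\sin q_1\,\partial_{p_1}+\sin q_2\,\partial_{p_2},
\]
and a direct computation gives
\[
[\tilde X_1,\tilde X_2]=-\sin q_1\cos p_1\,\partial_{q_1}+\sin p_1\cos q_1\,\partial_{p_1}-\sin q_2\cos p_2\,\partial_{q_2}+\sin p_2\cos q_2\,\partial_{p_2}.
\]
One more bracket with $\tilde X_1$ produces a vector whose $(q_i,p_i)$-component pair is proportional to $(\sin 2p_i\cos q_i,\ \sin^2 p_i\sin q_i)$, and the plan is then to verify that $\tilde X_1,\tilde X_2,[\tilde X_1,\tilde X_2],[\tilde X_1,[\tilde X_1,\tilde X_2]]$ are linearly independent at a point $x=(q_1,p_1)$, $y=(q_2,p_2)$ lying off the diagonal of $\mathcal{O}\times\mathcal{O}$, with all occurring sines and cosines nonzero, and avoiding the resonances $p_1\equiv\pm p_2$ and $q_1\equiv\pm q_2\pmod{2\pi}$ — a convenient choice being $x=(\pi/6,\pi/4)$, $y=(\pi/3,\pi/5)$. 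This gives $\text{Lie}_{\tilde x}(\tilde{\mathcal X})=T_{\tilde x}\mathcal{O}^{(2)}$, so $(H3)$ holds.

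The conceptual content is entirely carried by the three general theorems; the work here is the Hörmander-type bracket bookkeeping for $(H2)$ and $(H3)$. The only point requiring care — and the main (if mild) obstacle — is the choice of evaluation point: at $u=0$ the fiber directions of $\hat X_i$ degenerate, while on the diagonal or at the sign-resonances of the profile $\sin$ the two-point fields $\tilde X_i$ become ``parallel'' across the two factors, and in all of these degenerate configurations the iterated brackets fail to span. Away from this closed degenerate locus the remaining verification is a finite determinant computation, and since $\hat X_i$ and $\tilde X_i$ are real-analytic, establishing each rank condition at a single non-degenerate point suffices.
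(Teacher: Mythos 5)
Your strategy is exactly the paper's: verify $(H1)$, $(H2)$, $(H3)$ for $f_1=f_2=\sin$ by direct Lie-bracket computation and a single nondegenerate evaluation point, then apply Theorems~\ref{Uniformly geometrically ergodic}, \ref{Positivity of the top Lyapunov exponent}, \ref{Exponential mixing} verbatim. The only substantive difference is cosmetic — for $(H2)$ you propose the bracket $[\hat X_2,[\hat X_1,\hat X_2]]$ where the paper uses $[\hat X_1,[\hat X_1,\hat X_2]]$, and you name different test points $(q,p,u)=(\pi/4,\pi/4,1,2)$ and $\tilde x=((\pi/6,\pi/4),(\pi/3,\pi/5))$ versus the paper's $(\pi/3,\pi/3,\sqrt2/2,\sqrt2/2)$ and $((\pi/3,\pi/2),(\pi/2,\pi/3))$; by the $q\leftrightarrow p$ symmetry of this model either bracket choice is viable. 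One caveat: you assert that the $4\times4$ matrices are nonsingular at your proposed points but do not actually evaluate the determinants, whereas the paper carries out the computation (getting $27/128$ and $\sqrt3/2$); since the determinants are real-analytic and the claim is that they are not identically zero, a single explicit numerical evaluation is what closes the argument, and without it your proof is an outline rather than a complete verification.
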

    Let $\{\Phi_{\underline{\tau}}^m\}$ be as above. Take $f_1(p)=\sin p$ and $f_2(q)=q$. Then 
    \[
    \Phi_{\underline{\tau}}=(q+\tau_1\sin p, p+\tau_2(q+\tau_1\sin p))
    \]
    serves as an analog to the Chirikov standard map, which is an area-preserving map on $\mathbb{T}^2$ defined by
    \[
    \Psi(q,p)=(q+K\sin p, p+q+K\sin p),
    \]
    where $K$ is a parameter in $\mathbb{R}$. 
    \begin{cor}[\textbf{Analog model to the Chirikov standard map}]\label{Analog model to the Chirikov standard map}
    	Let $\{\Phi_{\underline{\tau}}^m\}$, and $b_{\underline{\tau}}^\varrho$ be as above with $f_1(p):=\sin(p)$ and $f_2(q):=q$. Then, for the model analogous to the Chirikov standard map, Theorem \ref{Uniformly geometrically ergodic}, Theorem \ref{Positivity of the top Lyapunov exponent}, and Theorem \ref{Exponential mixing} apply.
    \end{cor}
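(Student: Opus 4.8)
\noindent\emph{Proof strategy.} The plan is to verify that hypotheses $(H1)$, $(H2)$ and $(H3)$ hold for $f_1(p)=\sin p$ and $f_2(q)=q$, and then invoke Theorem~\ref{Uniformly geometrically ergodic}, Theorem~\ref{Positivity of the top Lyapunov exponent} and Theorem~\ref{Exponential mixing}; the argument runs parallel to the proof of Corollary~\ref{The Pierrehumbert model with randomized time}, only the bracket computations changing because $f_2$ is now the linear shear. Condition $(H1)$ is immediate: $C_{f_1}=\{0,\pi\}$ and $C_{f_1'}=\{\pi/2,3\pi/2\}$ are disjoint, while $f_2'\equiv 1$ gives $C_{f_2'}=\emptyset$, so $C_{f_2}\cap C_{f_2'}=\emptyset$ trivially.

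For $(H2)$, I would work in coordinates $(q,p,a,b)$ on $T\mathcal{O}$, in which $\hat{X_1}=(\sin p,\,0,\,b\cos p,\,0)$ and $\hat{X_2}=(0,\,q,\,0,\,a)$. A short computation gives $[\hat{X_1},\hat{X_2}]=(-q\cos p,\ \sin p,\ bq\sin p-a\cos p,\ b\cos p)$ and then $[\hat{X_1},[\hat{X_1},\hat{X_2}]]=(-\sin 2p,\ 0,\ -2b\cos 2p,\ 0)$. Evaluating the four fields $\hat{X_1},\hat{X_2},[\hat{X_1},\hat{X_2}],[\hat{X_1},[\hat{X_1},\hat{X_2}]]$ at, for instance, $q=1$, $p=\pi/2$, $(a,b)=(1,1)$ — a point of $T\mathcal{O}$ since $(1,\pi/2)\notin F$ — one finds them linearly independent, so $\text{Lie}_{\hat x}(\hat{\mathcal{X}})=T_{\hat x}T\mathcal{O}$ there. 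Heuristically, bracketing $\hat{X_1}$ with itself against $\hat{X_2}$ already produces the $b$-fibre direction from the $b\cos p$ term, while $[\hat{X_1},\hat{X_2}]$ recovers the remaining base and $a$-fibre directions.

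The verification of $(H3)$ is entirely analogous on $\mathcal{O}^{(2)}$: in coordinates $(q_1,p_1,q_2,p_2)$ one has $\tilde{X_1}=(\sin p_1,0,\sin p_2,0)$, $\tilde{X_2}=(0,q_1,0,q_2)$, hence $[\tilde{X_1},\tilde{X_2}]=(-q_1\cos p_1,\sin p_1,-q_2\cos p_2,\sin p_2)$ and $[\tilde{X_1},[\tilde{X_1},\tilde{X_2}]]=(-\sin 2p_1,0,-\sin 2p_2,0)$, and at the point $\tilde x=((1,\pi/4),(1,\pi/2))\in\mathcal{O}^{(2)}$ (two distinct points, both with $q\neq 0$) the four fields $\tilde{X_1},\tilde{X_2},[\tilde{X_1},\tilde{X_2}],[\tilde{X_1},[\tilde{X_1},\tilde{X_2}]]$ span $T_{\tilde x}\mathcal{O}^{(2)}$, so $\text{Lie}_{\tilde x}(\tilde{\mathcal{X}})=T_{\tilde x}\mathcal{O}^{(2)}$. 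Granting $(H1)$–$(H3)$, Theorems~\ref{Uniformly geometrically ergodic}, \ref{Positivity of the top Lyapunov exponent} and \ref{Exponential mixing} then apply.

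I expect the bracket computations above to be routine; the genuinely delicate point is that $f_2(q)=q$ is not $2\pi$-periodic, so $\varphi_{\tau}^{(2)}:(q,p)\mapsto(q,p+\tau q)$ descends to $\mathbb{T}^2$ only for integer $\tau$. The corollary should therefore be read with the $q$-axis unrolled, i.e.\ on the cylinder $\mathbb{R}_q\times\mathbb{S}^1_p$ with $\mathcal{O}=(\mathbb{R}_q\times\mathbb{S}^1_p)\setminus F$, just as the Chirikov standard map is studied in action--angle form. On this non-compact phase space the one ingredient that must be reworked is the Foster--Lyapunov drift estimate behind Theorem~\ref{Uniformly geometrically ergodic}: since $q_{m+1}=q_m+\tau_{2m+1}\sin p_m$ lets the base coordinate wander, one must take the weight $V$ to grow in $q$ (in addition to blowing up near $F$) and check that the drift still contracts. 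This, rather than the hypoellipticity verifications, is where I anticipate the main work.
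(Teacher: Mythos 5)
Your verification of $(H1)$, $(H2)$, $(H3)$ is the same as the paper's proof of this corollary: the brackets you compute for $\hat{\mathcal{X}}$ and $\tilde{\mathcal{X}}$ agree (up to the labelling of fibre variables) with those in Section~5, and the paper also concludes by exhibiting a single point where the associated $4\times 4$ determinant is nonzero — it uses $(\pi/3,\pi/3,\sqrt{2}/2,\sqrt{2}/2)$ for $(H2)$, giving $\det A = (9-\sqrt{3}\pi)/16$, and $(\pi,\pi/2,\pi/2,\pi/3)$ for $(H3)$, giving $\det B = (3-\sqrt{3})\pi/4$, rather than the points you pick, but that is immaterial since both arguments rest on analyticity of the determinant plus a single nonvanishing evaluation. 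So the core of your proposal is in line with the paper.

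Where you diverge from the paper is in your final paragraph. You are right to observe that $q\mapsto q$ is not a well-defined map $\mathbb{S}^1\to\mathbb{R}$, so that $\varphi_\tau^{(2)}(q,p)=(q,p+\tau q)$ is not a map of $\mathbb{T}^2$ for non-integer $\tau$. The paper does \emph{not} handle this by unrolling $q$ to the cylinder $\mathbb{R}_q\times\mathbb{S}^1_p$ as you propose; instead, Remark~1.6 declares that $f_2$ should be regarded as the identity map $\mathbb{S}^1\to\mathbb{S}^1$, which is analytic, and asserts that all results go through. Your cylinder approach is a genuinely different route and you correctly anticipate that it would force a reworking of the Foster--Lyapunov drift behind Theorem~\ref{Uniformly geometrically ergodic} with a weight growing in $q$, work that you do not carry out. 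Two honest caveats: first, the paper's own resolution in Remark~1.6 is terse — it does not explain how $\tau f_2(q)$, with $f_2(q)\in\mathbb{S}^1$ and $\tau$ a real number, yields a well-defined displacement of $p\in\mathbb{S}^1$ — so your unease about this point is warranted; second, since you neither complete the cylinder argument nor accept the paper's circle-valued reinterpretation, your proposal as written leaves this foundational point open. For the $(H1)$--$(H3)$ verifications themselves, though, you match the paper.
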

    \begin{rem}
    	Note that the function $f_2: \mathbb{S}^1\to\mathbb{R}$, $q\mapsto q$, is not a continuous function. However, we can regard it as the identity map from $\mathbb{S}^1$ to $\mathbb{S}^1$, which is obviously analytic. And the results in this paper still apply in this setting.
    \end{rem}
	\textbf{Relation to existing results.} Closest to the present work are the results of Agazzi, Mattingly, and Melikechi \cite{AMM, AMM2}, of Blumenthal, Coti Zelati, and Gvalani \cite{BCZG} and of Cooperman \cite{Co}. In \cite{AMM, AMM2}, Agazzi, Mattingly, and Melikechi study random splitting and apply their results to random splittings of fluid models. They prove these random splittings are ergodic and converge to their deterministic counterparts in a certain sense, and, for conservative Lorenz-96 and $2d$ Euler, that their top Lyapunov exponent is positive. In \cite{BCZG}, Blumenthal, Coti Zelati, and Gvalani introduce a robust framework for establishing exponential mixing in random dynamical systems. They subsequently apply this framework to the 1994 Pierrehumbert model \cite{P}, which consists of alternating periodic shear flows with randomized phases. In \cite{Co}, Cooperman employs the techniques developed in \cite{BCZG} and further addresses the challenges posed by the presence of almost surely stationary points in the flow. This allows him to demonstrate exponential mixing in Bressan's sense for the Pierrehumbert model under conditions where the switching times between horizontal and vertical shearing are randomized. 
	
	For comparison, drawing inspiration from \cite{AMM, AMM2}, we conceptualize our model as a random splitting of Hamiltonian horizontal and vertical shear flows. We establish the existence of a unique stationary measure and employ the framework of \cite{B} to derive sufficient conditions under which the random dynamical systems induced by horizontal and vertical shear flows exhibit a positive top Lyapunov exponent. Furthermore, we prove the exponential mixing properties of the corresponding advection equation, guided by the insights from \cite{BCZG, Co}. Although many of the basic ideas are the same, there are several significant differences. For example, we investigate a more general class of Hamiltonian shear flows, , which necessitates overcoming the challenges introduced by the presence of an invariant set in the two-point Markov chain. Using this framework, we establish the exponential mixing for passive scalar advection under alternating sine shear flows with random time, while Pierrehumbert \cite{P} studied with random phase. Moreover, we establish the exponential mixing for an analog model to the Chirikov standard map, further extending the applicability of our approach.
	
	The literature on Lyapunov exponents is extensive and spans various domains. In deterministic systems, establishing that the top Lyapunov exponent is positive is notoriously challenging, as discussed in \cite{PC, W, Y}. Even in the context of random systems, effective methods are relatively scarce. Furstenberg's seminal work in 1963 \cite{F} initiated extensive research into criteria for the Lyapunov exponent of random matrix products, leading to a substantial body of literature following Furstenberg's approach, including works such as \cite{AV, B, BL, C, L, R, V}. In this manuscript, we particularly draw on the methodologies from \cite{B}. Additionally, we acknowledge significant results regarding positive Lyapunov exponents in various fluid models, including Lagrangian flows advected by stochastic 2D Navier-Stokes equations and Galerkin truncations of these equations, as discussed in \cite{BP, BBP}. Furthermore, we note studies providing quantitative estimates of the top Lyapunov exponent, such as \cite{BXY, BBP22}. 
	
	Passive scalar mixing by fluid motion is a critical aspect of many phenomena, as discussed in \cite{P99, SS, W00, WF}. Understanding mixing is a central question in both physics and engineering applications and has been a topic of active research in mathematics, for example, \cite{BBP2021, BBP2022, CZDE, CLS, FI, LTD, SC} and the references therein. In particular, a number of recent works have addressed Bressan's mixing conjecture \cite{Br}, see, e.g., \cite{ACM, EZ, ZY, C, CDL, IKX}. The mixing properties of the Pierrehumbert model have been extensively studied in the applied and computational literature, see, e.g., \cite{PY, T, TDG}. Numerical evidence suggests that it is a universal exponential mixer \cite{CRWZ}, with the first rigorous mathematical proof provided in \cite{BCZG}. This work inspired the proof of exponential mixing for alternating sine shears via random durations in \cite{C}.
	
	This paper is organized as follows. Section 2 contains some preliminary results for Markov chains and the proof of Uniform geometric ergodicity of the one-point Markov chain. In Section 3, we demonstrate the positivity of the top Lyapunov exponent for the model that alternates between two Hamiltonian shears. Section 4 contains the proof of exponential mixing in the sense of Bressan, while Section 5 extends the abstract results to applications in the Pierrehumbert model with randomized timing and a model analogous to the Chirikov standard map.
	
	\textbf{Acknowledgment.} The author sincerely appreciates Prof. Jinxin Xue for engaging discussions and helpful insights on related topics.
	
	\section{Proof of Theorem \ref{Uniformly geometrically ergodic}}
	\quad The primary goal of this section is to prove Theorem \ref{Uniformly geometrically ergodic}, which establishes uniform geometric ergodicity for the one-point chain. Some of the concepts used in the sequel are best introduced in a general abstract setting. We begin by covering some preliminaries on Markov chains in this abstract setting in the following subsection.
	\subsection{Markov chain preliminaries}  
	\quad The projective chain is the Markov chain $\{\check{\Phi}_{\underline{\tau}}^m\}$ on the projective bundle $P\mathcal{O}$ of $\mathcal{O}$ with transition kernel $\check{P}((x,u),\check{A})=\mathbb{P}(\check{\Phi}_{\underline{\tau}}(x,u)\in\check{A})$, its corresponding vector fields are $\check{\mathcal{X}}=\{\check{X_1}, \check{X_2}\}$, where for $(x,u)\in P\mathcal{O}$, by a slight abuse of notation, we use $u$ to denote both an element of the tangent space $T_x\mathcal{O}$ and its equivalence class in $P_x\mathcal{O}$ whenever $u\neq 0$,
	\begin{equation*}
		\check{\Phi}_{\underline{\tau}}^m(x,u)=\left(\Phi_{\underline{\tau}}^m(x), \frac{D_x\Phi_{\underline{\tau}}^m u}{|D_x\Phi_{\underline{\tau}}^m u|}\right),
	\end{equation*}
	and
	\[ 
	\check{X_i}(x,u) = \begin{pmatrix}
		X_i(x) \\
		DX_i(x)u-\langle DX_i(x)u, u \rangle u
	\end{pmatrix},\quad 
	i=1, 2.
	\]
	Thoughout this paper, consider the sequence $\{\psi_{\underline{\tau}}^m\}$, where each term is defined recursively by $\psi_{\underline{\tau}}^m = \psi_{\tau_{2m}}^{(2)} \circ \psi_{\tau_{2m-1}}^{(1)}(\psi_{\underline{\tau}}^{m-1})$. This sequence represents one of the chains above (one point chain, two point chain, or projective chain) on $M$, which is $d$-dimensional with transition kernel $Q$ and vector fields $\mathcal{F}:=\{Y_1,Y_2\}$. We say that $Q$ is \textbf{Feller} if $Qg$ is continuous for any bounded, continuous function $g$ on $M$. We say that a set $A\subset M$ is \textbf{small} for $\{\psi_{\underline{\tau}}^m\}$  if there exists an $m_0>0$, and a positive measure $\nu$ on $M$ such that for any $x\in A$,
	\[
	Q^{m_0}(x,B)\geq \nu(B), 
	\]
	for all $B\in\mathscr{B}(M)$.  We say that $\{\psi_{\underline{\tau}}^m\}$ is \textbf{topologically irreducible} if for every $x\in M$ and open set $U\subset M$, there exists $m_0$ such that 
	\[
	Q^{m_0}(x,U)>0.
	\]
	We say that $\{\psi_{\underline{\tau}}^m\}$ is \textbf{strongly aperiodic} if there exists $x_0\in M$ such that for all open sets $U$ containing $x_0$, we have
	\[
	Q(x_0,U)>0.
	\]
	We say that a function $V:M \to[1,\infty)$ satisfies a \textbf{Lyapunov-Foster drift condition} for $\{\psi_{\underline{\tau}}^m\}$ if there exist $\alpha\in (0,1)$, $b>0$ and a compact set $C\subset M$ such that 
	\[
	QV\leq \alpha V+b\chi_C,
	\]
	where $\chi_C$ is the characteristic function on $C$.
	\begin{thm}[\cite{BCZG}, Theorem 2.3]\label{conditions for uniformly geometrically ergodic}
		Let $\{\psi_{\underline{\tau}}^m\}_{m\geq 0}$ and $Q$ be as above. If $Q$ is a Feller transition kernel and assume the following:
		\begin{itemize}
			\item[(a)] There exists an open small set for $\{\psi_{\underline{\tau}}^m\}$.
			\item[(b)] $\{\psi_{\underline{\tau}}^m\}$ is topologically irreducible.
			\item[(c)] $\{\psi_{\underline{\tau}}^m\}$ is strongly aperiodic.
			\item[(d)] There exists a function $V: X\to[1,\infty)$ that satisfies a Lyapunov-Foster drift condition for $\{\psi_{\underline{\tau}}^m\}$.
		\end{itemize}
		Then $Q$ is $V$-uniformly geometrically ergodic.
	\end{thm}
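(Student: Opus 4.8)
The plan is to derive this from the classical theory of geometric ergodicity for $\psi$-irreducible Markov chains on general state spaces (Meyn--Tweedie), whose hypotheses are measure-theoretic; the work is to upgrade the topological assumptions (a)--(c) and the Feller property into those hypotheses. Recall the relevant criterion: if a Markov chain with kernel $Q$ on $M$ is $\psi$-irreducible and aperiodic, admits a petite set $C$, and there is $V:M\to[1,\infty)$ with $QV\le\alpha V+b\chi_C$ for some $\alpha\in(0,1)$ and $b>0$, then $Q$ has a unique stationary probability measure $\mu$, $\mu(V)<\infty$, and there are $R>0$, $\rho\in(0,1)$ with $|Q^mg(x)-\int g\,d\mu|\le RV(x)\|g\|_V\rho^m$ for all $x\in M$ and $g\in\mathcal{M}_V(M)$, i.e. $Q$ is $V$-uniformly geometrically ergodic. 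So it suffices to verify: (i) $\psi$-irreducibility; (ii) that the compact set $C$ from (d) is petite; (iii) aperiodicity.

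For (i), let $A$ be the open small set from (a), so $Q^{m_0}(y,\cdot)\ge\nu(\cdot)$ for every $y\in A$ and some nontrivial measure $\nu$. Given $x\in M$ and any $B$ with $\nu(B)>0$, topological irreducibility (b) gives $n=n(x)$ with $Q^n(x,A)>0$, and by Chapman--Kolmogorov $Q^{n+m_0}(x,B)\ge\int_A Q^n(x,dy)Q^{m_0}(y,B)\ge Q^n(x,A)\nu(B)>0$; hence $\nu$ is an irreducibility measure and the chain is $\psi$-irreducible. For (ii), fix the compact set $C$ of (d). For each $x\in C$, (b) gives $m(x)\ge1$ with $Q^{m(x)}(x,A)>0$; since $Q$ is Feller and $A$ is open, $z\mapsto Q^{m(x)}(z,A)$ is lower semicontinuous, so $Q^{m(x)}(z,A)\ge\varepsilon(x):=\tfrac12 Q^{m(x)}(x,A)>0$ on an open neighborhood $U(x)\ni x$. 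By compactness $C\subset\bigcup_{j=1}^k U(x_j)$, and for $x\in C$, choosing $j$ with $x\in U(x_j)$, one gets $Q^{m(x_j)+m_0}(x,B)\ge\varepsilon(x_j)\nu(B)$ for all $B$. Taking $a$ uniform on $\{m(x_1)+m_0,\dots,m(x_k)+m_0\}$ and $K_a:=\sum_m a(m)Q^m$ yields $K_a(x,\cdot)\ge k^{-1}(\min_j\varepsilon(x_j))\nu(\cdot)$ for every $x\in C$, so $C$ is $\nu$-petite; in a $\psi$-irreducible aperiodic chain petite sets are small, which delivers a one-step minorization if needed.

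For (iii), use (c): let $x_0$ satisfy $Q(x_0,U)>0$ for every open $U\ni x_0$. By (b) and Feller continuity there is $n_1$ and a neighborhood $W\ni x_0$ with $Q^{n_1}(\cdot,A)>0$ on $W$; combining with the one-step move guaranteed at $x_0$ one exhibits return cycles to $A$ of two consecutive lengths, so the period of the ($\psi$-irreducible) chain equals $1$, i.e. the chain is aperiodic. With (i), (iii), the petite set $C$ from (ii), and the drift inequality $QV\le\alpha V+b\chi_C$ of (d), the Meyn--Tweedie geometric ergodicity theorem applies and yields the asserted $V$-uniform geometric ergodicity of $Q$.

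I expect the main obstacle to be step (ii): promoting the pointwise topological irreducibility (b) into a minorization uniform over the compact set $C$. The ingredients that make it work are the Feller property---used to pass from positivity of $Q^{m}(\cdot,A)$ at a point to positivity on a whole neighborhood via lower semicontinuity of $z\mapsto Q^m(z,A)$ for open $A$---together with compactness of $C$ to extract a finite subcover, and the device of randomizing the number of steps so that a single sampled kernel $K_a$ dominates $\nu$ simultaneously on all of $C$. Verifying aperiodicity in step (iii) from (c) is comparatively routine but must be threaded through the cycle decomposition of the $\psi$-irreducible chain.
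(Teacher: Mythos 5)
This theorem is imported from \cite{BCZG} (their Theorem~2.3) and carries no proof in the present paper, so there is nothing to compare against line by line; one can only judge your proposal on its own terms. Your overall plan -- reduce to the Meyn--Tweedie criterion for $V$-uniform geometric ergodicity by upgrading the topological hypotheses (a)--(c) and the Feller property into $\psi$-irreducibility, a petite set, and aperiodicity, then feed in the drift condition (d) -- is exactly the natural route, and it is (to my knowledge) the same mechanism used in \cite{BCZG}. Steps (i) and (ii) are correct and essentially complete: in (i) you use the open small set and topological irreducibility together with Chapman--Kolmogorov to produce a nontrivial irreducibility measure $\nu$, and in (ii) you correctly observe that the Feller property gives lower semicontinuity of $z\mapsto Q^m(z,A)$ for open $A$ (as a supremum of the continuous functions $Q^m g_n$ for $g_n\uparrow\chi_A$), extract a finite subcover of the compact drift set $C$, and assemble a sampled kernel $K_a$ minorizing $\nu$ uniformly on $C$, which is precisely $\nu$-petiteness. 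The invocation of the $V$-uniform geometric ergodicity theorem (Meyn--Tweedie~16.0.1) with a petite set in the drift inequality is then the right closing step.

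Step (iii), however, has a genuine gap. You produce $n_1$ with $Q^{n_1}(x_0,A)>0$ and, via Feller and (c), $Q^{n_1+1}(x_0,A)>0$; but these are paths \emph{from $x_0$ to $A$}, not return cycles \emph{from $A$ to $A$}, and $x_0$ need not lie in $A$ or even be $\psi$-typical, so this does not yet pin down the period. To close the loop you must either (a) run the chain from $A$ back into a neighborhood $W$ of $x_0$: apply topological irreducibility and Feller lower semicontinuity at a point of $\mathrm{supp}(\nu)$ to get $\nu(V)>0$ with $Q^{n_2}(\cdot,W)>0$ on $V$, and then chain $Q^{m_0}(a,\cdot)\ge\nu$, $V\to W\to A$ with lengths $n_1$ and $n_1+1$, and one more $m_0$-step minorization, to put two consecutive integers into the minorization-time set $E_A$; or (b) upgrade both positivity statements $Q^{n_1}(\cdot,A)>0$ and $Q^{n_1+1}(\cdot,A)>0$ to hold on a common \emph{open} neighborhood $V\ni x_0$ (this needs one further shrinking step using Feller applied to $Q(\cdot,W)$), note that $\psi(V)>0$ by accessibility, and then invoke the cyclic-class decomposition: since a small set lies $\psi$-a.e.\ in a single cyclic class, a $\psi$-positive set of points reaching $A$ in both $n_1$ and $n_1+1$ steps forces the period to divide $1$. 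Either completion is standard but neither is trivial, and neither is present in your sketch; as written, ``one exhibits return cycles to $A$ of two consecutive lengths'' asserts the conclusion rather than deriving it. Also a small point: the required underlying fact that the period is computed from a small set $A'$ with $\nu'(A')>0$ (and that one can always manufacture such a small set from the given one) should be acknowledged, since your $\nu$ may well have $\nu(A)=0$.
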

    \subsection{Uniform geometric ergodicity of the one-point Markov chain}
    \quad The proof of Theorem \ref{Uniformly geometrically ergodic} using the following results. The first is a version of Theorem 1 in \cite[Chapter 3]{J}, it says that if the Lie bracket condition holds at $x\in M$, then due to the surjectivity of $D_t\psi^m(x,t)$, the random dynamics can locally reach any infinitesimal direction in the tangent space from $x$ in arbitrarily small times. 
    
  	\begin{lem}\label{surjective}
    	Let $\{\psi_{\underline{\tau}}^m\}$ be as above. Assume $\text{Lie}_x(\mathcal{F})=T_xM$ for some $x\in M$. Then for any neighborhood $U$ of $x$ and any $T'>0$ there exists a point $y$ in $U$, an $m$ and a $\tilde{t}=(\tilde{t}_1,\cdots,\tilde{t}_{2m})\in\mathbb{R}_+^{2m}:=(0,\infty)^{2m}$ such that $\sum_{i=1}^{2m}\tilde{t}_i\leq T'$ and $\psi^m(x,\cdot): t\mapsto \psi^m(x,t)$ is a submersion at $t=\tilde{t}$ and $\psi^m(x,\tilde{t})=y$, i.e., $D_t\psi^m(x,\tilde{t}):T_{\tilde{t}}\mathbb{R}_+^{2m}\to T_yM$ is surjective.
    \end{lem}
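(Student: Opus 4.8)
\emph{Proof plan.} This is the classical accessibility/orbit-theorem fact that Lie-bracket generation at a point yields an endpoint map of full rank in arbitrarily short time, and I would reproduce that argument adapted to the fixed alternating word $Y_1,Y_2,Y_1,Y_2,\dots$. First I would reduce the claim to: for every $\varepsilon\in(0,T']$ there exist $m\ge 1$ and $\tilde t\in\mathbb{R}_+^{2m}$ with $\sum_{i=1}^{2m}\tilde t_i<\varepsilon$ at which $D_t\psi^m(x,\cdot)$ is surjective. Indeed, the trajectory issued from $x$ stays within distance $O(\varepsilon)$ of $x$ once the total switching time is below $\varepsilon$, so choosing $\varepsilon$ small enough automatically puts $y:=\psi^m(x,\tilde t)$ in $U$. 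Before anything else I would shrink a relatively compact neighborhood $U'\ni x$ so that $\mathrm{Lie}_z(\mathcal{F})=T_zM$ for all $z\in U'$ — legitimate because $z\mapsto\dim\mathrm{Lie}_z(\mathcal{F})$ is lower semicontinuous and equals $d$ at $x$ — and then shrink $\varepsilon$ so that every trajectory from $x$ of total length $<\varepsilon$ remains in $U'$.

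Next I would set $k^\ast$ to be the maximum of $\mathrm{rank}\,D_t\psi^m(x,t)$ over all admissible $(m,t)$ with $\sum_i t_i<\varepsilon$; this maximum is attained (the rank is integer-valued and $\le d$), say at $(\bar m,\bar t)$, and I put $\bar y:=\psi^{\bar m}(x,\bar t)\in U'$. Since $\bar t$ has strictly positive entries summing to less than $\varepsilon$, on a small neighborhood $W$ of $\bar t$ the rank of $D_t\psi^{\bar m}(x,\cdot)$ is $\le k^\ast$ by maximality and $\ge k^\ast$ by lower semicontinuity, hence constant. The constant rank theorem then produces (after shrinking $W$) an embedded $k^\ast$-dimensional submanifold $\Sigma:=\psi^{\bar m}(x,W)$ through $\bar y$, with $T_z\Sigma=\mathrm{Im}\,D_t\psi^{\bar m}(x,t)$ whenever $z=\psi^{\bar m}(x,t)$, $t\in W$. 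It then remains to show $k^\ast=d$.

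The heart of the argument is to prove that $Y_1$ and $Y_2$ are tangent to $\Sigma$ along $\Sigma$. Fix $z=\psi^{\bar m}(x,t)$ with $t\in W$ and consider $H(t,s_1,s_2):=\psi^{(2)}_{s_2}\bigl(\psi^{(1)}_{s_1}(\psi^{\bar m}(x,t))\bigr)$ for $(t,s_1,s_2)$ near $(t,0,0)$. Whenever $s_1,s_2>0$ and the total time stays $<\varepsilon$, $H$ is of the form $\psi^{\bar m+1}(x,\cdot)$, so $\mathrm{rank}\,DH\le k^\ast$ on that set; this parameter set is dense near $(\bar t,0,0)$ and $\{\mathrm{rank}\,DH\le k^\ast\}$ is closed, so the bound persists at $(t,0,0)$. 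On the other hand, differentiating $H$ at $s_1=s_2=0$ gives $\partial_tH=D_t\psi^{\bar m}(x,t)$, $\partial_{s_1}H=Y_1(z)$ and $\partial_{s_2}H=Y_2(z)$, so $\mathrm{Im}\,DH|_{(t,0,0)}=T_z\Sigma+\mathbb{R}Y_1(z)+\mathbb{R}Y_2(z)$; comparing with the rank bound forces $Y_1(z),Y_2(z)\in T_z\Sigma$. With this in hand, $Y_1|_\Sigma$ and $Y_2|_\Sigma$ are genuine vector fields on $\Sigma$, tangency to an embedded submanifold is stable under Lie brackets, so every iterated bracket is tangent to $\Sigma$; hence $\mathrm{Lie}_z(\mathcal{F})\subseteq T_z\Sigma$ for $z\in\Sigma$ near $\bar y$, and in particular $\dim\mathrm{Lie}_{\bar y}(\mathcal{F})\le k^\ast$. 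Since $\bar y\in U'$ this dimension is $d$, so $k^\ast=d$, $D_t\psi^{\bar m}(x,\bar t)$ is surjective, and $\bar y\in U$ by the choice of $\varepsilon$, which is exactly the assertion.

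I expect the tangency step to be the main obstacle: the rank bound $\mathrm{rank}\,DH\le k^\ast$ is known only for strictly positive switching times, and it must be transported to the degenerate configuration $s_1=s_2=0$ — this is precisely what the density-plus-closedness argument achieves. A secondary technical point is the fixed alternating order of $Y_1$ and $Y_2$, which I handle by appending exactly one full period $(s_1,s_2)$ and reading off both tangent directions $Y_1(z)$ and $Y_2(z)$ from that single period; the lower-semicontinuity facts for matrix rank and for $\dim\mathrm{Lie}$ are routine.
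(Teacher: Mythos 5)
Your proof is correct, but it takes a genuinely different route from the paper's. The paper follows the classical Krener-style constructive argument (the one in Jurdjevic's book, which it cites): pick $Y_{i_1}$ with $Y_{i_1}(x)\neq 0$ to build a one-dimensional orbit piece $M_1$; if $\dim M_1<d$, bracket-closedness of tangency forces some $Y_{i_2}\in\mathcal{F}$ to be non-tangent to $M_1$, so one appends a short flow of $Y_{i_2}$ and enlarges the submanifold; this iterates until all of $\mathcal{F}$ is tangent, at which point the Lie hypothesis forces the dimension to be $d$; finally, since the resulting word in $Y_1,Y_2$ need not alternate, small positive padding times are inserted to fit the fixed alternating pattern $\psi^m$. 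Your argument instead goes top-down: you define $k^\ast$ as the maximal attainable rank of $D_t\psi^m(x,\cdot)$ over all admissible $(m,t)$ with small total time, invoke the constant-rank theorem at a maximizer to get a $k^\ast$-dimensional embedded piece $\Sigma$, and then transport the rank bound from the open strictly-positive-time region to the boundary configuration $s_1=s_2=0$ via density-plus-closedness of $\{\mathrm{rank}\le k^\ast\}$, which lets you read both $Y_1(z)$ and $Y_2(z)$ off a single appended period and conclude tangency of $\mathcal{F}$ to $\Sigma$, hence $k^\ast=d$. The two approaches share the same engine (tangency is closed under Lie brackets, plus the constant-rank theorem), but yours dispenses with the iterative construction and handles the alternating-word constraint intrinsically rather than by post hoc padding; the paper's version is closer to the textbook proof it references and makes the submanifold construction fully explicit at each stage. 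One point worth flagging in your write-up: the paper only assumes $\mathrm{Lie}_x(\mathcal{F})=T_xM$ at the single point $x$, while you use it at $\bar y\neq x$; your lower-semicontinuity remark justifying the shrinkage to $U'$ is exactly the needed supplement, so keep it — it is not a decoration but a load-bearing step.
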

    \begin{proof}
    	Fix any neighborhood $U$ of $x$ and any $T'>0$. Choose $Y_{i_1} \in\mathcal{F}$ such that $Y_{i_1}(x)\neq 0$. Let $\varepsilon_1>0$ be such that $Y_{i_1}(\psi_t^{(i_1)}(x))\neq 0$ for all $t\in(0,\varepsilon_1)$. For sufficiently small $\varepsilon_1<T$, $M_1=\{\psi_t^{(i_1)}(x): t\in(0,\varepsilon_1)\}\subset U$. By the constant rank theorem, $M_1$ is a one-dimensional submanifold of $M$, with $t\mapsto \psi_t^{(i_1)}(x)$ its coordinate map. If $\dim M=\dim M_1$, the proof is finished. Otherwise, since a classical result asserts that if $X$ abd $Y$ are vector fields that are tangent to a submanifold, then their Lie bracket $[X,Y]$ and any linear combination $\alpha X+\beta Y$ are also tangent to this submanifold, there exists a vector field $Y_{i_2}$ in $\mathcal{F}$ such that $Y_{i_2}$ is not tangent to $M_1$.
    	
    	Let $x_1$ be a point in $M_1$ such that $Y_{i_2}(x_1)$ is not colinear with $Y_{i_1}(x_1)$. Denote by $ \hat{t}_{1}$ the point in $(0,\varepsilon_1)$ such that $\psi_{\hat{t}_{1}}^{(i_1)}(x)=x_1$. Then $Y_{i_1}$ and $Y_{i_2}$ are not collinear for all points in a neighborhood $x_1$. Therefore there exist a neighborhood $I_1\in (0,\varepsilon_1)$ of $\hat{t}_1$ and $\varepsilon_2>0$ such that the mapping $F_2$ given by $\psi_{t_2}^{(i_2)}\psi_{t_1}^{(i_1)}(x)$ from $I_1\times (0,\varepsilon_2)$ into $M$ satisfies $\varepsilon_1+\varepsilon_2<T'$, $F_2(I_1\times (0,\varepsilon_2))\subset U$ and the tangent map of $F$ has rank $2$ at each point of $I_1\times (0,\varepsilon_2)$. 
    	
    	Continuing in this way, we obtain a sequence $Y_{i_1}, Y_{i_2},\cdots, Y_{i_m}$ of vector fields in $\mathcal{F}$ with the property that the tangent map of the mapping $F_m(t_1,\cdots,t_m)=\psi_{t_m}^{(i_m)}\circ\cdots\circ\psi_{t_1}^{(i_1)}(x)$ has rank $m$ at a point $\hat{t}=(\hat{t}_{1},\cdots,\hat{t}_{m})\in\mathbb{R}_+^{m}$, which further satisfies $\hat{t}_{1}+\cdots+\hat{t}_{m}<T'$ and $F_m(\hat{t}_{1},\cdots,\hat{t}_{m})\in U$. Again by the constant rank theorem, the image of a neighborhood of $(\hat{t}_{1},\cdots,\hat{t}_{m})$ under the mapping $F_m$ is an $m$-dimensional submanifold $M_m$ of $M\cap U$.
    	
    	The procedure stops precisely when each element of $\mathcal{F}$ is tangent to $M_m$. So there exists a point $\hat{y}=\psi_{\hat{t}_{m}}^{(i_{m})}\circ\cdots\circ\psi_{\hat{t}_{1}}^{(i_1)}(x)$ in $U$, an $m$ and $\hat{t}=(\hat{t}_{1},\cdots,\hat{t}_{m})\in\mathbb{R}_+^{m}$ such that $\hat{t}_{1}+\cdots+\hat{t}_{m}<T'$ and $t\mapsto\psi_{t_{m}}^{(i_{m})}\circ\cdots\circ\psi_{t_{1}}^{(i_1)}(x)$ is a submersion at $\hat{t}$. Then we can take a neighborhood $U_{\hat{y}}\subset U$ of $\hat{y}$ and $\tilde{t}=(\tilde{t}_1,\cdots,\tilde{t}_{2m})\in\mathbb{R}_+^{2m}$ with $\tilde{t}_{2(k-1)+i_k}=\hat{t}_k$ for $k=1,\cdots,m$ and $\tilde{t}_{2(k-1)+j}>0$ for $k=1,\cdots,m$ and $\{1,2\}\ni j\neq i_k$ sufficiently small such that $\sum_{i=1}^{2m}\tilde{t}_i\leq T'$, $t\mapsto\psi^m(x,t)$ is a submersion at $\tilde{t}$ and $y:=\psi^m(x,\tilde{t})\in U_{\hat{y}}\subset U$.
    \end{proof}
 
    Secondly, we present the subsequent lemma, which essentially provides a lower bound on the probability that the image of the random variable lies in a given set, under certain geometric and probabilistic conditions.
    \begin{lem}\label{absolutely continuous}
    	Let $\tau$ be a continuous random variable in $U$ which is an open subset of $\mathbb{R}^m$, with density $\rho$. Let $d\leq m$, and let $\phi:U\times M\to M$, $(t,x)\mapsto \phi_x(t)$ be a $C^1$ map. If for some $(t_0,x_0)\in U\times M$ the map $\phi_{x_0}$ is a submersion at $t_0$ and $\rho$ is bounded below by $c_0>0$ on a neighborhood of $t_0$. Then there exist a constant $c>0$ and neighborhood $U_{x_0}$ of $x_0$ and $U_{\hat{x}}$ of $\hat{x}:=\phi(t_0,x_0)$ such that 
    	\begin{equation}\label{lower bound}
    		\mathbb{P}(\phi(\tau,x)\in A)\geq c{\text{Leb}}_M(A\cap U_{\hat{x}}), \ \ \text{for all}\ \  x\in U_{x_0},\ A\in\mathscr{B}(M).
    	\end{equation}
    \end{lem}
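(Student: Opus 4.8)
The plan is to localize $\phi_{x_0}$ near $t_0$ using the submersion hypothesis, push forward the (lower-bounded) density of $\tau$, and then propagate the estimate to nearby base points $x$ by a continuity/implicit-function argument. First, since $\phi_{x_0}$ is a submersion at $t_0$, the differential $D_t\phi_{x_0}(t_0):\mathbb{R}^m\to T_{\hat x}M$ is surjective; choosing a complementary subspace to its kernel we may, after a linear change of coordinates on $\mathbb{R}^m$, split $t=(s,z)$ with $s\in\mathbb{R}^d$, $z\in\mathbb{R}^{m-d}$, such that $s\mapsto\phi_{x_0}(s,z_0)$ is a diffeomorphism from a neighborhood $W$ of $s_0$ onto a neighborhood $U_{\hat x}$ of $\hat x$, with Jacobian bounded above and below on $\overline W$. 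By the constant rank / inverse function theorem this diffeomorphism and its inverse have derivatives controlled uniformly on a slightly shrunk neighborhood.

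Next I would incorporate the base-point dependence. Since $(t,x)\mapsto\phi_x(t)$ is $C^1$, the map $(s,x)\mapsto\phi_x(s,z_0)$ is $C^1$ and its $s$-differential at $(s_0,x_0)$ is invertible; by the implicit function theorem (with parameters), there is a neighborhood $U_{x_0}$ of $x_0$ and, for each $x\in U_{x_0}$, a $C^1$ map $s\mapsto\phi_x(s,z_0)$ which is a diffeomorphism from a fixed neighborhood onto an open set still containing a fixed ball inside $U_{\hat x}$ (shrink $U_{\hat x}$ if necessary), with Jacobian determinant bounded below by some $J_0>0$ uniformly in $x\in U_{x_0}$. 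Shrinking $U_{x_0}$ further if needed we may also assume that for all $x\in U_{x_0}$ the relevant slice $\{s_0\}\times\{z_0\}$-neighborhood on which $\rho\ge c_0$ is still available, i.e. $\rho\ge c_0$ on $W\times\{z: |z-z_0|<r\}$ for some small $r>0$.

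Then the estimate follows by restricting the expectation to this good event and applying the change of variables formula: for any Borel $A\subseteq M$ and any $x\in U_{x_0}$,
\[
\mathbb{P}(\phi(\tau,x)\in A)=\int_U \chi_A(\phi_x(t))\rho(t)\,dt
\ge c_0\int_{|z-z_0|<r}\int_{W}\chi_A\big(\phi_x(s,z_0)\big)\,ds\,dz,
\]
where I have additionally used continuity in $z$ to replace $\phi_x(s,z)$ by $\phi_x(s,z_0)$ up to harmless shrinking (alternatively integrate over $z$ directly, which only improves the constant). Applying the substitution $w=\phi_x(s,z_0)$ on $W$, whose Jacobian is $\ge J_0$, gives $\int_W\chi_A(\phi_x(s,z_0))\,ds\ge J_0^{-1}\,{\rm Leb}_M(A\cap U_{\hat x})$ after intersecting with the common image $U_{\hat x}$. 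Setting $c:=c_0\,|\{z:|z-z_0|<r\}|\,J_0^{-1}>0$ yields \eqref{lower bound}.

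The main obstacle is making the neighborhoods $U_{x_0}$ and $U_{\hat x}$ genuinely \emph{uniform} in $x$: one must ensure that as $x$ varies over $U_{x_0}$, the image of the fixed domain $W$ under $\phi_x(\cdot,z_0)$ always contains one common open set $U_{\hat x}$ (so that ${\rm Leb}_M(A\cap U_{\hat x})$ is the same for every $x$) while the Jacobian lower bound $J_0$ does not degenerate. This is handled by a compactness argument: take closed balls $\overline W\subset\mathbb{R}^d$ and $\overline{U_{x_0}}$ on which the $C^1$ data and the inverse-function-theorem constants are continuous, hence uniformly bounded, and then shrink $U_{\hat x}$ to a ball contained in $\bigcap_{x\in\overline{U_{x_0}}}\phi_x(W,z_0)$, which is open and nonempty since it contains $\hat x=\phi_{x_0}(s_0,z_0)$ for $U_{x_0}$ small enough by continuity of $x\mapsto\phi_x(s_0,z_0)$. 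Everything else is the routine change-of-variables bookkeeping sketched above.
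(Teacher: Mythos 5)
Your overall plan is the right one and essentially the same as the paper's: split $t=(s,z)$ into coordinates along which $\phi_{x_0}$ is locally a submersion, push the lower bound on $\rho$ through a change of variables, and keep the constants uniform in the base point $x$ by a continuity/compactness argument. The paper packages this by defining $\tilde\phi_x(t^1,t^2)=(\phi_x(t^1,t^2),t^2)$, which is a local diffeomorphism of $\mathbb{R}^m$, and performing a single $m$-dimensional change of variables with $|\det D\tilde\phi_x^{-1}|\geq c_1$ uniformly on $U_{x_0}\times(U_{\hat x}\times U_2)$.

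However, there is a genuine flaw in your main chain of inequalities, namely
\[
\mathbb{P}(\phi(\tau,x)\in A)\ge c_0\int_{|z-z_0|<r}\int_{W}\chi_A\big(\phi_x(s,z_0)\big)\,ds\,dz,
\]
justified by ``using continuity in $z$ to replace $\phi_x(s,z)$ by $\phi_x(s,z_0)$.'' This does not work: $\chi_A$ is only Borel-measurable, not continuous, so continuity of $z\mapsto\phi_x(s,z)$ gives you no control over $\chi_A(\phi_x(s,z))-\chi_A(\phi_x(s,z_0))$. For a general Borel set $A$ the slice $\{s\in W:\phi_x(s,z)\in A\}$ can change drastically with $z$ (consider $A$ a thin set transverse to the $z$-fibers), and the displayed lower bound simply fails. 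Your parenthetical remark --- ``alternatively integrate over $z$ directly, which only improves the constant'' --- is in fact the correct argument and should replace, not supplement, the flawed one: for each $z$ near $z_0$ apply the $d$-dimensional change of variables $w=\phi_x(s,z)$, with the Jacobian of $s\mapsto\phi_x(s,z)$ bounded below and the image $\phi_x(W,z)$ containing a fixed $U_{\hat x}$, both uniformly over $(x,z)$ in a compact neighborhood (not just over $x$), and then integrate in $z$. Carried out this way, your argument becomes a fiberwise rewriting of the paper's single change of variables via $\tilde\phi_x$, so the conclusion holds; but as written the proposal rests on an unjustified continuity step.
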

    \begin{proof}
    	Write $t=(t^1,t^2)\in\mathbb{R}^d\times\mathbb{R}^{m-d}$. Since $\phi_{x_0}$ is a submersion at $t_0$, by the constant rank theorem, we may assume $\frac{\partial \phi_x(t^1,t^2)}{\partial t^1}$ is surjective on some neighborhood $U_0$ of $x_0$ and $V_0$ of $t_0$. Define 
    	\begin{align*}
    		\tilde{\phi}_x: V_0&\to M\times \mathbb{R}^{m-d},\\
    		(t^1,t^2) &\mapsto (\phi_x(t^1,t^2),t^2).
    	\end{align*}
    	Then $D_t\tilde{\phi}_x$ is invertible on $V_0$ for any $x\in U_0$. Note that $\tilde{\phi}_{x_0}(t_0)=(\hat{x},t_0^2)$, we can choose a neighborhood $U_{\hat{x}}$ of $\hat{x}$, $U_2$ of $t^2$ and $U_{x_0}$ of $x_0$ such that $|\det D\tilde{\phi}_x^{-1}(y)|\geq c_1>0$ for any $x\in U_{x_0}$ and any $y\in U_{\hat{x}}\times U_2$.
    	
    	Write the random variable $\tau$ as a couple $(\tau^1,\tau^2)$, and let $B\subset U_{x_0}$ be a Borel set,
    	\begin{align*}
    		\mathbb{P}(\phi(x,\tau)\in B)&\geq \mathbb{P}(\phi(x,\tau)\in B, \tau^2\in U_2)\\
    		&\geq \mathbb{P}(\tilde{\phi}_x(\tau)\in B\times U_2)\\
    		&=\mathbb{P}(\tau\in \tilde{\phi}_x^{-1}(B\times U_2))\\
    		&=\int_{\tilde{\phi}_x^{-1}(B\times U_2)}\rho(t)dt\geq c_0\int_{\tilde{\phi}_x^{-1}(B\times U_2)}dt\\
    		&=c_0\int_{B\times U_2}|\det D\tilde{\phi}_x^{-1}(y)|dy\\
    		&\geq c_0c_1\int_{B\times U_2}dy=c_0c_1\text{Leb}_M(B)\text{Leb}_{\mathbb{R}^{m-d}}(U_2).
    	\end{align*}
    	Therefore $(\ref{lower bound})$ holds with $c=c_0c_1\text{Leb}_{\mathbb{R}^{m-d}}(U_2)$.
    \end{proof}
    We are now prepared to prove the existence of an open small set for $\{\psi_{\underline{\tau}}^m\}$.
  \begin{lem}\label{small set}
  	Let $\{\psi_{\underline{\tau}}^m\}$ and $Q$ be as above. Assume $\text{Lie}_x(\mathcal{F})=T_xM$ for some $x\in M$. Then there exists an open small set for $\{\psi_{\underline{\tau}}^m\}$.
  \end{lem}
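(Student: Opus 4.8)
The plan is to chain together Lemmas \ref{surjective} and \ref{absolutely continuous}. First, I would fix some $T'\in(0,T)$ and invoke Lemma \ref{surjective} with the given point $x$ and this $T'$, producing an integer $m\geq 1$, a vector $\tilde{t}=(\tilde{t}_1,\dots,\tilde{t}_{2m})\in\mathbb{R}_+^{2m}$ with $\sum_{i=1}^{2m}\tilde{t}_i\leq T'<T$, and a point $y\in M$ such that the deterministic composition $t\mapsto\psi^m(x,t)$ is a submersion at $\tilde{t}$ with $\psi^m(x,\tilde{t})=y$. Since each $\tilde{t}_i>0$ and $\sum_i\tilde{t}_i<T$, the vector $\tilde{t}$ lies in the interior of the cube $[0,T]^{2m}$, and the map $(t,z)\mapsto\psi^m(z,t)$ is $C^1$ (indeed smooth) on $\mathbb{R}_+^{2m}\times M$ because the generating vector fields $Y_1,Y_2$ are analytic.

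Next, I would observe that $\psi^m_{\underline{\tau}}$ depends on $\underline{\tau}$ only through its first $2m$ entries $(\tau_1,\dots,\tau_{2m})$, which are i.i.d.\ uniform on $[0,T]$ and hence have joint density $\rho\equiv T^{-2m}$ on the open cube $(0,T)^{2m}$; in particular $\rho\geq c_0:=T^{-2m}>0$ on a neighborhood of $\tilde{t}$. Applying Lemma \ref{absolutely continuous} with the random variable $(\tau_1,\dots,\tau_{2m})$, the $C^1$ map $\phi_z(t):=\psi^m(z,t)$, $d:=\dim M$ (with $d\leq 2m$, which is automatic from the construction in Lemma \ref{surjective}), $t_0:=\tilde{t}$ and $x_0:=x$ — the submersion hypothesis being exactly the conclusion just obtained — yields a constant $c>0$, an open neighborhood $U_{x_0}$ of $x$ and an open neighborhood $U_{\hat{x}}$ of $\hat{x}:=y$ such that
\[
\mathbb{P}\bigl(\psi^m(z,(\tau_1,\dots,\tau_{2m}))\in A\bigr)\geq c\,\text{Leb}_M(A\cap U_{\hat{x}})\qquad\text{for all } z\in U_{x_0},\ A\in\mathscr{B}(M).
\]

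Finally, since the $\tau_i$ are independent, the Markov property identifies the left-hand side with $Q^m(z,A)$, so the displayed inequality reads $Q^m(z,A)\geq c\,\text{Leb}_M(A\cap U_{\hat{x}})$ for every $z\in U_{x_0}$ and $A\in\mathscr{B}(M)$. Taking $m_0:=m$ and defining $\nu(\cdot):=c\,\text{Leb}_M(\cdot\cap U_{\hat{x}})$ — a positive measure because $U_{\hat{x}}$ is a nonempty open subset of the $\dim M$-dimensional manifold $M$ — shows that the open set $U_{x_0}$ is small for $\{\psi^m_{\underline{\tau}}\}$, which is what we want. The argument is essentially bookkeeping once the two preceding lemmas are available; the only point I expect to require care is choosing $T'<T$ strictly, so that the uniform density of the switching times remains bounded below on a neighborhood of $\tilde{t}$, and this poses no real obstacle.
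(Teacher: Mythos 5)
Your proposal is correct and follows exactly the same route as the paper's proof: invoke Lemma \ref{surjective} to get a submersion point $\tilde{t}$ for $\psi^m(x,\cdot)$, then apply Lemma \ref{absolutely continuous} to obtain the minorization $Q^m(z,A)\geq c\,\text{Leb}_M(A\cap U')$ on a neighborhood of $x$, and read off the small set. Your added care in taking $T'<T$ strictly (so that $\tilde{t}$ lies in the interior of $[0,T]^{2m}$ where the uniform density is bounded below) is a sensible explicit remark that the paper leaves implicit.
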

  \begin{proof}
  	It follows from $\text{Lie}_x(\mathcal{F})=T_xM$ for some $x\in M$ and Lemma \ref{surjective} that there exists $\tilde{t}=(\tilde{t}_1,\cdots,\tilde{t}_{2m})\in[0,T]^{2m}$ such that $\psi^m(x,\cdot)$ is a submersion at $\tilde{t}$. Then by Lemma \ref{absolutely continuous}, there exist a constant $c>0$ and neighborhood $U$ of $x$ and $U'$ of $\psi^m(x,\tilde{t})$ such that
  	\begin{equation*}
  		\mathbb{P}(\psi_{\underline{\tau}}^m(x)\in A)\geq c\text{Leb}_M(A\cap U'), \ \ \text{for all}\ \  x\in U,\ A\in\mathscr{B}(M).
  	\end{equation*}
  	We conclude $U$ is a $\nu$-small set with $\nu(A):=c\text{Leb}_M(A\cap U')$.
  \end{proof}
  Next, we prove that the one-point Markov chain is topologically irreducible. We say that a set $A\subseteq M$ is \textbf{reachable} from $B\subseteq M$ for $\{\psi_{\underline{\tau}}^m\}$ if for every point $x\in B$, there exists $m\geq 0$ and $\underline{\tau}=(\tau_1,\tau_2,\cdots,\tau_{2m})\in [0,\infty)^{2m}$ such that $\psi_{\underline{\tau}}^m(x)\in A$. We say that $\{\psi_{\underline{\tau}}^m\}$ is \textbf{exactly controllable} if for any $x, y\in M$, there exists $m\geq 0$ and $\underline{\tau}=(\tau_1,\tau_2,\cdots,\tau_{2m})\in [0,\infty)^{2m}$ such that $\psi_{\underline{\tau}}^m(x)=y$. Clearly, exactly controllability of $\{\psi_{\underline{\tau}}^m\}$ implies that it is topologically irreducible. Finally, we prove that for any $(q,p)\in \mathcal{O}$, there exists $m\geq 0$ and $\underline{\tau}=(\tau_1,\tau_2,\cdots,\tau_{2m})\in [0, T]^{2m}$ such that $\Phi_{\underline{\tau}}^m(q,p)=(q_0,p_0)$. In fact, $\psi_{\underline{\tau}}$ corresponding to the sequence $(\tau_1,\tau_2)$ is the same as $\psi_{\underline{\tau}}^2$ corresponding to the sequence $(\tau_1/2,0,\tau_1/2,\tau_2)$, so by making finitely many such substitutions we can ensure that $\tau_i\leq T$ for all $i$.
  
  \begin{lem}\label{irreducibility}
  	Let $\{{\Phi}_{\underline{\tau}}^m\}$ be as above. Then, $\{{\Phi}_{\underline{\tau}}^m\}$ is exactly controllable.
  \end{lem}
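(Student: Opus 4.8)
The plan is to prove the slightly stronger statement that any two points of $\mathcal{O}$ can be joined by a \emph{short} alternating sequence of shears. Given $x=(q,p)$ and $y=(q_0,p_0)$ in $\mathcal{O}$, I will exhibit at most three shears, alternating between horizontal maps of type $(1)$ and vertical maps of type $(2)$, whose composition sends $x$ to $y$. Since two consecutive shears of the same type compose to a single shear of that type and a shear of parameter $0$ is the identity, such an alternating word can always be rewritten as $\Phi^m_{\underline\tau}$ for some $m$ (here $m\le 2$ before rescaling), and the substitution described in the paragraph preceding this lemma then lets us arrange $\underline\tau\in[0,T]^{2m}$. Two elementary facts are used throughout: first, since each $f_i$ is non-constant and real analytic on $\mathbb{S}^1$, the zero set $C_{f_i}$ is finite, so $\mathbb{S}^1\setminus C_{f_i}$ is open and dense; second, whenever $p\notin C_{f_1}$ the map $\tau\mapsto q+\tau f_1(p)$, read modulo $2\pi$, is surjective onto $\mathbb{S}^1$ already for $\tau$ ranging over a bounded subinterval of $[0,\infty)$, so one type-$(1)$ shear moves the first coordinate to any prescribed value while fixing the second, and symmetrically for type $(2)$ when $q\notin C_{f_2}$. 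One also checks directly that $\varphi^{(1)}_\tau$ and $\varphi^{(2)}_\tau$ map $\mathcal{O}$ into $\mathcal{O}$, so every intermediate point of the trajectory automatically stays in the domain.

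Call a point $(a,b)\in\mathbb{T}^2$ \emph{good} if $a\notin C_{f_2}$ and $b\notin C_{f_1}$; the good points form an open dense subset of $\mathbb{T}^2$ that is contained in $\mathcal{O}$. The first step is to steer $x$ to a good point. Because $x=(q,p)\in\mathcal{O}=\mathbb{T}^2\setminus F$, either $p\notin C_{f_1}$ or $q\notin C_{f_2}$. In the first case I would apply one type-$(1)$ shear to move $q$ to some value outside the finite set $C_{f_2}$, landing at a good point while keeping the already-good second coordinate; in the second case I would apply one type-$(2)$ shear to move $p$ to some value outside $C_{f_1}$, again reaching a good point. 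Either way, one shear produces a good point $g=(\bar q,\bar p)$.

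The second step steers $g$ to $y=(q_0,p_0)\in\mathcal{O}$, using the disjunction forced by $y\notin F$: either $p_0\notin C_{f_1}$, or else $p_0\in C_{f_1}$ and hence $q_0\notin C_{f_2}$. If $p_0\notin C_{f_1}$, apply a type-$(2)$ shear at $g$ (legitimate since $\bar q\notin C_{f_2}$) to bring the second coordinate to $p_0$, landing at $(\bar q,p_0)$, and then a type-$(1)$ shear (legitimate since $p_0\notin C_{f_1}$) to bring the first coordinate to $q_0$, reaching $y$. If instead $p_0\in C_{f_1}$ and $q_0\notin C_{f_2}$, apply a type-$(1)$ shear at $g$ (legitimate since $\bar p\notin C_{f_1}$) to bring the first coordinate to $q_0$, then a type-$(2)$ shear (legitimate since $q_0\notin C_{f_2}$) to bring the second coordinate to $p_0$. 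In all cases two shears suffice. Concatenating the two steps yields an alternating word of length at most three taking $x$ to $y$, which rewrites as $\Phi^m_{\underline\tau}$ with $\underline\tau\in[0,T]^{2m}$; since $x,y$ were arbitrary, $\{\Phi^m_{\underline\tau}\}$ is exactly controllable.

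The only genuine obstacle — and the reason this is not a one-line appeal to surjectivity of shears — is the degenerate locus: a type-$(1)$ shear does nothing on $\{p\in C_{f_1}\}$ and a type-$(2)$ shear does nothing on $\{q\in C_{f_2}\}$, and $F$ is precisely the intersection of these two lines, removed from the phase space. Routing every trajectory through a good point, where both kinds of shear are effective, is what circumvents this, and the finiteness of $C_{f_1}$ and $C_{f_2}$, coming from analyticity, is what guarantees a good target is always within reach of a single shear.
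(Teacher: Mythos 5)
Your proposal is correct and follows essentially the same strategy as the paper: route any trajectory through an intermediate point at which both the horizontal and vertical shears are non-degenerate, and exploit that a single effective shear can place one coordinate at any prescribed value modulo $2\pi$. The paper picks a fixed hub $(q_0,p_0)$ where $|f_1|$ and $|f_2|$ attain their maxima and argues by going from $x$ to the hub and then ``in reverse'' to $y$, whereas you use an adaptive good point and handle the second leg by an explicit case split on whether $p_0\in C_{f_1}$; this is a cleaner replacement for the paper's slightly terse appeal to reversal, but it is the same underlying idea, not a genuinely different route.
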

  \begin{proof}
  	Since $f_i$, $i=1, 2$ are analytic on $\mathbb{S}^1$, let $p_0$ be the point in $\mathbb{S}^1$ where $|f_1|$ attains its maximum value, and $q_0$ be the point in $\mathbb{S}^1$ where $|f_2|$ attains its maximum value. It is evident that  $(q_0, p_0)\in\mathcal{O}$ and $f_1'(p_0)=0$, $f_2'(q_0)=0$. 
  	
  	Next, we prove that for any $(q,p)\in \mathcal{O}$, there exists $m\geq 0$ and $\underline{\tau}=(\tau_1,\tau_2,\cdots,\tau_{2m})\in [0,\infty )^{2m}$ such that $\Phi_{\underline{\tau}}^{m}(q,p)=(q_0,p_0)$. Since $(q,p)\in \mathcal{O}$, then either $f_1(p)\neq 0$ or $f_2(q)\neq 0$. Without loss of generality we may assume $f_1(p)\neq 0$. In fact, if $f_1(p)= 0$, then $f_2(q)\neq 0$, set $\tau_1=0$, and almost every $\tau_2>0$ such that $f_1(p+\tau_2f_2(q))\neq 0$. Choose $k, j\in \mathbb{Z}$ such that $\tau_1=\frac{q_0+2k\pi-q}{f_1(p)}\geq 0$ and $\tau_2=\frac{p_0+2j\pi-p}{f_2(q_0)}\geq 0$, then $\varphi_{\tau_2}^{(2)}\circ\varphi_{\tau_1}^{(1)}(q,p)=(q_0,p_0)$. We conclude by noting that for any two points $(q,p)$ and $(q_1,p_1)$ there exists $\tilde{m}\geq 0$ and $\underline{\tilde{\tau}}=(\tilde{\tau}_1,\tilde{\tau}_2,\cdots,\tilde{\tau}_{2\tilde{m}})$ such that $\Phi_{\underline{\tilde{\tau}}}^{\tilde{m}}(q,p)=(q_1,p_1)$ by first traveling to $(q_0,p_0)$ and then traveling to $(q_1,p_1)$ (using the same arguments as above, but in reverse).
  \end{proof}
  Now we proceed to prove the Lyapunov-Foster drift condition for $\{\Phi_{\underline{\tau}}^m\}$, which is inspired by \cite[Lemma 13]{Co}.
  \begin{pro}\label{Lyapunov-Foster drift condition}
  	Let $\{\Phi_{\underline{\tau}}^m\}$ be as above. Assume $(H1)$ holds. Then there exists a continuous function $V:\mathcal{O}\to[1,\infty)$ which satisfies the Lyapunov-Foster drift condition for $\{\Phi_{\underline{\tau}}^m\}$ .
  \end{pro}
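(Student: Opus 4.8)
\noindent\textit{Strategy.}
Since $f_1,f_2$ are non-constant and real-analytic on $\mathbb{S}^1$, each $C_{f_i}$ is finite, hence $F=C_{f_2}\times C_{f_1}$ is a finite subset of $\mathbb{T}^2$ whose points are mutually separated by some $\rho_0>0$; by $(H1)$ the zeros of each $f_i$ are simple, so at every $z=(q_*,p_*)\in F$ the numbers $a_1:=f_1'(p_*)$ and $a_2:=f_2'(q_*)$ are nonzero. The plan is to take
\[
V(x):=1+d(x,F)^{-\eta},\qquad x\in\mathcal{O},
\]
for a small exponent $\eta\in(0,1)$ fixed at the end; $V$ is continuous on $\mathcal{O}$, is $\ge 1$, is bounded on every compact subset of $\mathcal{O}$, and blows up exactly on $F$. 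The crucial structural fact is that every $z\in F$ is a common fixed point of all the maps $\Phi_{\underline{\tau}}$ (because $\varphi^{(1)}_{\tau}$ and $\varphi^{(2)}_{\tau}$ fix it), so near $z$ one has, uniformly in $\underline{\tau}\in[0,T]^2$ and in the finitely many $z\in F$,
\[
\Phi_{\underline{\tau}}(x)=z+N_{\underline{\tau}}(x-z)+O(|x-z|^2),\qquad
N_{\underline{\tau}}:=D_z\Phi_{\underline{\tau}}=\begin{pmatrix}1&\tau_1 a_1\\ \tau_2 a_2&1+\tau_1\tau_2 a_1 a_2\end{pmatrix}\in\mathrm{SL}_2(\mathbb{R}).
\]

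\noindent\textit{Reduction near $F$.}
Write $x=z+r\hat{v}$ with $r=d(x,F)$ small and $|\hat{v}|=1$. Since $\det N_{\underline{\tau}}=1$ we have $\|N_{\underline{\tau}}^{-1}\|=\|N_{\underline{\tau}}\|$, so $c(T)\le|N_{\underline{\tau}}\hat{v}|\le C(T)$ uniformly with $C(T)=C_0(1+T^2)$ and $c(T)=C(T)^{-1}$; hence for $r$ small $\Phi_{\underline{\tau}}(x)$ stays within $\rho_0/2$ of $z$, its nearest point of $F$ is $z$, and $d(\Phi_{\underline{\tau}}(x),F)=r\,|N_{\underline{\tau}}\hat{v}|\,(1+O(r))$ with $O(r)$ uniform. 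Therefore
\[
PV(x)\le 1+r^{-\eta}(1+O(r))\,\sup_{|\hat{v}|=1}\mathbb{E}\big[|N_{\underline{\tau}}\hat{v}|^{-\eta}\big],
\]
so the drift $PV\le\alpha V$ on a small neighbourhood $\{d(x,F)<\delta_0\}$ of $F$ will follow once $\sup_{|\hat{v}|=1}\mathbb{E}[|N_{\underline{\tau}}\hat{v}|^{-\eta}]<1$. Because $\log|N_{\underline{\tau}}\hat{v}|$ is uniformly bounded, $\mathbb{E}[|N_{\underline{\tau}}\hat{v}|^{-\eta}]\le 1-\eta\,\mathbb{E}[\log|N_{\underline{\tau}}\hat{v}|]+C(T)\eta^2$, so for $\eta$ small this reduces to the key estimate $\inf_{|\hat{v}|=1}\mathbb{E}_{\underline{\tau}}[\log|N_{\underline{\tau}}\hat{v}|]>0$. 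To prove it, I would use the elementary inequality $\tfrac1T\int_0^T\log|sw+\beta|\,ds\ge\log|w|+\log T-\log2-1$ for all $w\ne0$, $\beta\in\mathbb{R}$ (substitute $u=sw+\beta$ and use convexity of $x\mapsto x\log x$). With $\hat{v}=(\cos\theta,\sin\theta)$ one computes $N_{\underline{\tau}}\hat{v}=(A,\ \tau_2 a_2 A+\sin\theta)$, $A=\cos\theta+\tau_1 a_1\sin\theta$, so $\log|N_{\underline{\tau}}\hat{v}|\ge\log|\tau_2 a_2 A+\sin\theta|$ and, averaging first in $\tau_2$ and then in $\tau_1$,
\[
\mathbb{E}_{\underline{\tau}}\big[\log|N_{\underline{\tau}}\hat{v}|\big]\ \ge\ \log|a_2|+\log T-\log2-1+\mathbb{E}_{\tau_1}\big[\log|A|\big].
\]
If $|\sin\theta|\ge T^{-3/2}$, the same elementary bound in $\tau_1$ gives $\mathbb{E}_{\tau_1}[\log|A|]\ge\log|a_1|+\log|\sin\theta|+\log T-\log2-1\ge-\tfrac12\log T+\log|a_1|-\log2-1$; if $|\sin\theta|<T^{-3/2}$ then $|A|\ge\tfrac12$ for all $\tau_1\in[0,T]$ (as $|\tau_1 a_1\sin\theta|\le|a_1|T^{-1/2}<\tfrac14$ for $T$ large), so $\mathbb{E}_{\tau_1}[\log|A|]\ge-\log2$. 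In both cases the right-hand side of the display is $\ge\tfrac12\log T-C_1$, which is $>0$ once $T$ exceeds a threshold depending only on $f_1,f_2$; since this bound is already uniform in $\hat{v}$ and over the finitely many $z\in F$, no compactness argument in $\hat{v}$ is needed. (Assumption $(H1)$, i.e. $a_1,a_2\ne0$, is essential here: if, say, $a_1=0$ then $(0,1)$ would be an invariant, unexpanded direction.) Going back, for $\eta$ small we obtain $\sup_{|\hat{v}|=1}\mathbb{E}[|N_{\underline{\tau}}\hat{v}|^{-\eta}]=:\alpha_0<1$, and then $PV\le\alpha V$ on $\{d(x,F)<\delta_0\}$ for a suitable $\delta_0>0$ and $\alpha\in(\alpha_0,1)$.

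\noindent\textit{The compact piece.}
Put $K:=\{x\in\mathcal{O}:d(x,F)\ge\delta_0\}$, a compact set; by the above $PV\le\alpha V$ off $K$, so it remains to check $\sup_K PV<\infty$. Using $d(x,F)^{-\eta}\le\sum_{z\in F}|x-z|^{-\eta}$ and $|\Phi_{\underline{\tau}}(x)-z|\ge|\Phi^1_{\underline{\tau}}(x)-q_*|$ where $\Phi^1_{\underline{\tau}}(x)=q+\tau_1 f_1(p)$, a one-dimensional change of variables in $\tau_1$ together with the local integrability of $|\cdot|^{-\eta}$ (which forces $\eta<1$) bounds $\mathbb{E}[|\Phi_{\underline{\tau}}(x)-z|^{-\eta}]$ by $C_\eta\,(T|f_1(p)|)^{-\eta}$ wherever $|f_1(p)|$ is bounded below; and wherever $|f_1(p)|$ is small while $x\in K$, the coordinate $q$ is bounded away from $C_{f_2}$, hence so is $\Phi^1_{\underline{\tau}}(x)$ (it moves by at most $T|f_1(p)|$), and therefore $\Phi_{\underline{\tau}}(x)$ stays at distance $\gtrsim\delta_0$ from $F$. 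Combining the two regions yields the Lyapunov--Foster inequality $PV\le\alpha V+b\chi_K$ with $b:=\sup_K PV<\infty$. The hard part of the argument is the uniform-in-direction positivity $\inf_{|\hat{v}|=1}\mathbb{E}[\log|N_{\underline{\tau}}\hat{v}|]>0$ --- in essence a Furstenberg-type statement for the single random $\mathrm{SL}_2$ matrix $N_{\underline{\tau}}$, where largeness of $T$ is used in an essential way (in particular to overcome the elliptic regime $|\mathrm{tr}\,N_{\underline{\tau}}|<2$ that occurs when $a_1a_2<0$ and $\tau_1\tau_2|a_1a_2|<4$); the Taylor expansion, the passage from the logarithmic estimate to the $\eta$-power estimate, and the finiteness of $PV$ on $K$ are all routine once $(H1)$ has secured $a_1,a_2\ne0$.
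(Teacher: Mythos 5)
Your proof is correct, but it takes a genuinely different route from the paper. The paper works with $V(q,p)=\max(d(q,C_{f_2}),d(p,C_{f_1}))^{-\beta}+b$, uses $(H1)$ only through the two-sided comparison $K^{-1}d(z,C_{f_i})\le|f_i(z)|\le K\,d(z,C_{f_i})$, and proves the drift by a direct combinatorial decomposition of the event square $[0,T]^2$ into sets (called $A$, $B$, $C$ there) on which $V(\Phi_{\underline{\tau}}(q,p))/V(q,p)$ is controlled by elementary bounds on $\mathbb{P}(A),\mathbb{P}(B),\ldots$ — there is no linearization and no matrix estimate. You instead exploit that every $z\in F$ is a common fixed point, expand $\Phi_{\underline{\tau}}$ to first order with $D_z\Phi_{\underline{\tau}}=N_{\underline{\tau}}\in\mathrm{SL}_2(\mathbb{R})$, and reduce the local drift to the one-step Furstenberg-type estimate $\inf_{|\hat v|=1}\mathbb{E}_{\underline{\tau}}[\log|N_{\underline{\tau}}\hat v|]>0$, which you verify via the elementary bound $\frac1T\int_0^T\log|sw+\beta|\,ds\ge\log|w|+\log T-\log 2-1$. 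The paper's route avoids any Taylor-remainder bookkeeping and works directly with the nonlinear dynamics; yours gives a cleaner conceptual picture — it isolates precisely why large $T$ is needed (to make the expected log-expansion of the single random $\mathrm{SL}_2$ matrix positive, including the elliptic regime when $a_1a_2<0$) and makes the role of $(H1)$ (i.e. $a_1=f_1'(p_*)\ne0$, $a_2=f_2'(q_*)\ne0$) transparent. Two small points you should spell out: (i) after the linearization you have $PV(x)\le1+\alpha_0' r^{-\eta}$ with $\alpha_0'<1$, so $PV/V\le\alpha_0'+(1-\alpha_0')/V(x)$; to obtain $\alpha<1$ uniformly you must shrink the neighbourhood of $F$ further so that $V$ is large enough to absorb the additive constant, and (ii) the elementary log integral inequality fails when $A=0$, which is a $\tau_1$-null event and therefore harmless under the iterated expectation, but worth noting. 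Both of these are cosmetic; the strategy and the key estimate are sound.
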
	
  \begin{proof}
  	Let $K>1$ such that 
  	\begin{equation*}
  		\frac{1}{K}d(z, C_{f_i})\leq|f_i(z)|\leq K d(z, C_{f_i}), \ i=1, 2
  	\end{equation*}
    for any $z\in\mathbb{S}^1$, such a $K$ exists by $(H1)$. 
  	
  	For any $(q,p)\in \mathcal{O}$, denote
  	\begin{equation*}
  		V(q,p)=\max(d(q,C_{f_2}),d(p,C_{f_1}))^{-\beta}+b
  	\end{equation*}
  	where $\beta>0$ will be chosen to be small and $b>0$ is a constant chosen so that $V(q,p)\geq 1$ for all $(q,p)\in \mathcal{O}$.
  	
  	Fix any $q_0\in C_{f_2}$ and $p_0\in C_{f_1}$. In order to prove that $V$ satisfies the Lyapunov-Foster drift condition for $\{\Phi_{\underline{\tau}}^m\}$, we only need to prove that there exist $\alpha<1$ and $\varepsilon_0>0$ such that 
  	\begin{equation*}
  		PV(q,p)\leq\alpha V(q,p),\quad (q,p)\in B_{\varepsilon_0}(q_0,p_0)\cap \mathcal{O},
  	\end{equation*}
  	where $B_{\varepsilon_0}(q_0,p_0)=\{(q,p)\in \mathbb{T}^2: d(q, q_0)+d(p, p_0)<\varepsilon_0\}$.
  	
  	Fix any $(q,p)\in B_{\varepsilon_0}(q_0,p_0)\cap \mathcal{O}$.
  
  	\begin{itemize}
  		\item[(1)] We first consider the case that $d(q, q_0)\leq d(p, p_0)$. Choose $\varepsilon_0>0$ small enough such that 
  		\[
  		TK\varepsilon_0<\frac{r}{8\pi},
  		\]
  		where
  		\[
  		r:=\min\{d(q_0, C_{f_2}\setminus \{q_0\}), d(p_0, C_{f_1}\setminus \{p_0\})\}.
  		\] 
  		Now we split into three sub-cases:
  		\begin{itemize}
  			\item[(a)] Denote 
  			\[
  			A=\left\{(\tau_1,\tau_2)\in [0,T]^2: d(q+\tau_1 f_1(p), q_0)\leq \frac{d(p, p_0)}{2KT}\right\}.
  			\]
  			It follows from $|f_1(p)|\geq \frac{1}{K}d(p, p_0)$ that 
  			\[
  			\mathbb{P}(A)\leq \frac{1}{T^2}.
  			\]
  			And since $|f_2(q')|\leq Kd(q',q_0)$ for any $|q'-q_0|<\frac{r}{6\pi}$, we see that 
  			\[
  			d(p+\tau_2f_2(q+\tau_1f_1(p)), p_0)\geq \frac{d(p, p_0)}{2},\text{ for all } (\tau_1,\tau_2)\in A
  			\]
  			Thus $V(\Phi_{\underline{\tau}}(q,p))\leq 2^\beta V(q,p)$ for any $(\tau_1,\tau_2)\in A$.
  			\item[(b)] Denote
  			\[
  			B=\left\{(\tau_1,\tau_2)\in [0,T]^2: \frac{d(p, p_0)}{2KT}<d(q+\tau_1 f_1(p), q_0)\leq 2d(p,p_0)\right\}
  			\]
  			Then 
  			\[
  			\mathbb{P}(B)\leq \frac{4K}{T}.
  			\]
  			And $V(\Phi_{\underline{\tau}}(q,p))\leq (2KT)^\beta V(q,p)$ for any $(\tau_1,\tau_2)\in B$.
  			\item[(c)] In the complement of $A\cup B$, we see that $V(\Phi_{\underline{\tau}}(q,p))\leq 2^{-\beta} V(q,p)$.
  		\end{itemize}
  		Based on the above, we conclude:
  		\begin{align}\label{22}
  			PV(q,p):=&\mathbb{E}(V(\Phi_{\underline{\tau}}(q,p)))\nonumber\\
  			\leq&\left(\frac{K 2^\beta}{T^2}+\frac{2^{2+\beta} K^{\beta+1}}{T^{1-\beta}}+2^{-\beta}\right)V(q,p).
  		\end{align}
  		\item[(2)] If $d(q, q_0)> d(p, p_0)$, then we split into four sub-cases:
  		\begin{itemize}
  			\item[(a)] As before, denote
  			\[
  			A=\left\{(\tau_1,\tau_2)\in [0,T]^2: d(q+\tau_1 f_1(p), q_0)\leq \frac{d(p, p_0)}{2KT}\right\}
  			\]
  			Then $\mathbb{P}(A)\leq \frac{1}{T^2}$ and $V(\Phi_{\underline{\tau}}(q,p))\leq 2^\beta d(p,p_0)^{-\beta}$. We only need to consider the case that $\mathbb{P}(A)>0$, this requires $d(q,q_0)-KTd(p,p_0)\leq d(p, p_0)/(2KT)$, and hence $d(p,p_0)\geq d(q,q_0)/(2KT)$ and thus $V(\Phi_{\underline{\tau}}(q,p))\leq (4KT)^\beta V(q,p)$ for any $(\tau_1,\tau_2)\in A$.
  			\item[(b)] Unlike before, we define 
  			\begin{equation*}
  				B=\left\{(\tau_1,\tau_2)\in [0,T]^2: \frac{d(p, p_0)}{2KT}<d(q+\tau_1 f_1(p), q_0)\leq \frac{d(q, q_0)}{\sqrt{T}}\right\}.
  			\end{equation*}
  			Then 
  			\[
  			\mathbb{P}(B)\leq \frac{2Kd(q,q_0)}{T^{3/2}d(p,p_0)}.
  			\]
  			We only need to consider the case that $\mathbb{P}(B)>0$, this requires
  			\[
  			d(q,q_0)-KTd(p,p_0)\leq \frac{d(q, q_0)}{\sqrt{T}},
  			\]
  			and hence $d(p, p_0)\geq \frac{d(q,q_0)}{2KT}$.
  			So it follows that $\mathbb{P}(B)\leq \frac{4K^2}{\sqrt{T}}$ and $V(\Phi_{\underline{\tau}}(q,p))\leq (4K^2T^2)^\beta V(q,p)$ for any $(\tau_1,\tau_2)\in B$.
  			\item[(c)] Denote
  			\[
  			C=\left\{(\tau_1,\tau_2): d(q+\tau_1 f_1(p), q_0)> \frac{d(q, q_0)}{\sqrt{T}}, |\tau_2f_2(q+\tau_1 f_1(p))|\leq 3d(q,q_0)\right\}.
  			\]
  			Then $\mathbb{P}(C)\leq \frac{3K}{\sqrt{T}}$. And $V(\Phi_{\underline{\tau}}(q,p))\leq T^{\beta/2}V(q,p)$ for any $(\tau_1,\tau_2)\in C$.
  			\item[(d)] In the complement of $A\cup B\cup C$, it follows from $d(p+\tau_2f_2(q+\tau_1 f_1(p)),p_0)\geq 2d(q,q_0)$ that $V(\Phi_{\underline{\tau}}(q,p))\leq 2^{-\beta} V(q,p)$.
  		\end{itemize}
  		Based on the above, we conclude:
  		\begin{align}\label{23}
  			PV(q,p):=&\mathbb{E}(V(\Phi_{\underline{\tau}}(q,p)))\nonumber\\
  			\leq&\left(\frac{4^\beta K^\beta}{T^{2-\beta}}+\frac{4^{1+\beta}K^{2+2\beta}}{T^{1/2-2\beta}}+\frac{3K}{T^{(1-\beta)/2}}+2^{-\beta}\right)V(q,p).
  		\end{align}
  	\end{itemize}
    Using equations \ref{22} and \ref{23}, by choosing $0<\beta<1/2$ sufficiently small and $T$ sufficiently large (depending on $\beta$ and $K$), we obtain the desired conclusion.
  \end{proof}

  \begin{proof}[\textbf{Proof of Theorem \ref{Uniformly geometrically ergodic}}]
  	  Observe that $\Phi_{\underline{\tau}}$ is a volume-preserving diffeomorphism of $\mathcal{O}$ for every $\underline{\tau}$. It is evident that the Lebesgue probability measure $\mu$ on $\mathcal{O}$ is a stationary measure for the Markov chain $\{\Phi_{\underline{\tau}}^m\}_{m=0}^{\infty}$. The Lie brackets of $X_1$ and $X_2$ are given by 
  	  \begin{equation*}
  		  [X_1,X_2](x)=DX_2(x)X_1(x)-DX_1(x)X_2(x)=
  		  \begin{pmatrix}
  			  -f_1'(p)f_2(q)\\
  			  f_1(p)f_2'(q)
  		  \end{pmatrix}
  	  \end{equation*}
  	  and the following matrix
  	  \begin{align*}
  		  (X_1(x),X_2(x), [X_1,X_2](x))=
  		  \begin{pmatrix}
  			  f_1(p) & 0 & -f_1'(p)f_2(q)\\
  			  0 & f_2(q) & f_1(p)f_2'(q)
  		  \end{pmatrix}
  	  \end{align*}
  	  has rank 2 for all $x=(q,p)\in\mathcal{O}$. Thus $\text{Lie}_x(\mathcal{X})=T_x\mathcal{O}$ for all $x\in \mathcal{O}$. Strong aperiodicity follows immediately from $\mathbb{P}(\tau<\varepsilon)>0$ for any $\varepsilon>0$ and $\Phi_0(x)=x$. By combining Lemma \ref{small set}, Lemma \ref{irreducibility}, and Proposition \ref{Lyapunov-Foster drift condition}, we establish that the conditions of Theorem \ref{conditions for uniformly geometrically ergodic} are satisfied. Consequently, the transition kernel $P$ of $\{\Phi_{\underline{\tau}}^m\}$ is $V$-uniformly geometrically ergodic, with $\mu$ as the unique stationary measure.
  \end{proof}
 
  \section{Proof of Theorem \ref{Positivity of the top Lyapunov exponent}}
\quad Theorem \ref{Positivity of the top Lyapunov exponent} makes two assertions: firstly, the existence and almost sure constancy of the Lyapunov exponent $\lambda_1$, and secondly, that $\lambda_1$ is strictly positive. We use $|u|$ to represent the norm of a vector $u$ in the tangent space $T_x\mathcal{O}$, and $\|A\|$ for the operator norm of a linear mapping $A: T_x\mathcal{O} \rightarrow T_y\mathcal{O}$. For any positive real number $r$, we define $\log^+(r) = \max(\log r, 0)$. Let $\mu$ be as in Theorem \ref{Uniformly geometrically ergodic}, which is the unique stationary measure on $\mathcal{O}$. Let us first outline how to prove the first one using tools from random dynamical systems theory. Given that $f_i$ belongs to $C^{\omega}(\mathbb{S}^1, \mathbb{R})$ and the set $\mathcal{O}\subset\mathbb{T}^{2}$ is bounded, we can establish the following integrability condition:
\begin{equation*}
	\mathbb{E}\int_{\mathcal{O}} (\log^+\|D_x\Phi_{\underline{\tau}}(x)\|+\log^+\|D_x\Phi_{\underline{\tau}}(x)^{-1}\|)\mu(dx)<\infty.
\end{equation*}
This condition being met, the multiplicative ergodic theorem subsequently assures the existence of the Lyapunov exponent and is constant for $\mu$-a.e. $x\in\mathcal{O}$ and almost every $\underline{\tau}$. Since $\Phi_{\underline{\tau}}^m$ is volume-preserving, $\det(D_x\Phi_{\underline{\tau}}^m)$ is identically equal to 1. Consequently, the sum of the Lyapunov exponents, $\lambda_{\Sigma}$, defined as the limit of $\frac{1}{m}\log |\det(D_x\Phi_{\underline{\tau}}^m)|$ as $m$ approaches infinity, equals zero, i.e.,  $\lambda_{\Sigma}=\lim_{m\to\infty}\frac{1}{m}\log |\det(D_x\Phi_{\underline{\tau}}^m)|=0$. The task ahead involves proving that the Lyapunov exponent, $\lambda_1$, does not degenerate to zero, ensuring its positive value. Let's proceed to discuss the consequences that arise from the condition $2\lambda_1= \lambda_{\Sigma}$. 

\subsection{Consequences of \boldmath $2\lambda_1=\lambda_{\Sigma}$}
\quad For each positive integer $m$, we define the pushforward of $\mu$ by $\Phi_{\underline{\tau}}^m$ as the probability measure $\mu_m$ on $\mathcal{O}$, which is given by
\begin{equation*}
	\mu_m(A):=(\Phi_{\underline{\tau}}^m)_*\mu(A):=\mu((\Phi_{\underline{\tau}}^m)^{-1}(A)).
\end{equation*}  
This construction allows us to track how the measure $\mu$ evolves under the action of $\Phi_{\underline{\tau}}^m$. For probability measure $\mu$ and $\mu_m$ on $\mathcal{O}$, following Donsker and Varadhan \cite[Lemma 2.1]{DV}, define the entropy of $\mu_m$ with respect to $\mu$ by 
\[h(\mu;\mu_m)=
\begin{cases}
	\int\frac{d\mu_m}{d\mu}(x)\log(\frac{d\mu_m}{d\mu}(x))\mu(dx) & \text{if} \ \ \mu_m\ll \mu,\\
	\infty & \text{otherwise},
\end{cases}
\]
where $\mu_m\ll\mu$ means that $\mu_m$ is absolutely continuous with respect to $\mu$ and $d\mu_m/d\mu$ is the Radon-Nikodym derivative. The measure $\mu_m$, which varies depending on $\underline{\tau}$ and is characterized as a random measure, leads us to consider its average relative entropy with $\mu$, symbolized as $\mathbb{E}h(\mu;\mu_m)$. The finiteness of this value is crucial for our subsequent analysis. Specifically, in cases where $\mu_m$ equates to $\mu$ almost surely, as in our situation, it follows that $\mathbb{E}h(\mu;\mu_m)=\mathbb{E}h(\mu;\mu)=0$.

\begin{thm}\label{degeneracy}
	Let $\{\Phi_{\underline{\tau}}^m\}$ be as above. Assume $(H1)$ holds and $2\lambda_1=\lambda_\Sigma$. Then either
	\begin{itemize}
		\item[(a)] there exists a Riemannian structure $\{g_x:x\in \text{Supp}(\mu)\}$ on $\text{Supp}(\mu)$ such that 
		\begin{equation}\label{conformal}
			g_{\Phi_t^m(x)}(D_x\Phi_t^m(x)u,D_x\Phi_t^m(x)v)=\det D_x\Phi_t^m(x) g_x(u,v)
		\end{equation}
		for all $m$, $t\in[0,T]^{2m}$, $x\in \text{Supp}(\mu)$ and $u,v\in T_x\mathcal{O}$; or
		\item[(b)] there exist proper linear subspaces $E_x^1,\cdots,E_x^p$ of $T_x \mathcal{O}$ for all $x\in \text{Supp}(\mu)$ such that 
		$$D_x\Phi_t^m(E_x^i)=E_{\Phi_t^m(x)}^{\sigma(i)}, \ \ 1\leq i\leq p$$
		for all $m$, $t\in[0,T]^{2m}$, $x\in \text{Supp}(\mu)$ and some permutation $\sigma$.
	\end{itemize}
\end{thm}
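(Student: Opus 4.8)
The plan is to establish Theorem~\ref{degeneracy} as a discrete-time, random-composition version of Baxendale's invariance principle \cite{B} (in the spirit of Ledrappier) relating relative entropy and Lyapunov exponents, followed by a regularity step that upgrades the resulting $\mu$-a.e.\ statement to the stated one, valid at every $x\in\text{Supp}(\mu)$ and every admissible parameter vector $t$. Observe first that $\lambda_1\geq\lambda_2$ and $\lambda_1+\lambda_2=\lambda_\Sigma$, so the hypothesis $2\lambda_1=\lambda_\Sigma$ is equivalent to $\lambda_1=\lambda_2$: the linear cocycle $D_x\Phi_{\underline{\tau}}^m$ carries a single Lyapunov exponent $\tfrac12\lambda_\Sigma$ of multiplicity two.

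First I would lift to the projective chain $\{\check{\Phi}_{\underline{\tau}}^m\}$ on the circle bundle $P\mathcal{O}$ and invoke the Furstenberg--Khasminskii description: writing $\check{\Lambda}$ for the one-step logarithmic expansion rate on $P\mathcal{O}$, every ergodic stationary measure $\check{\mu}$ of $\check{P}$ projecting onto $\mu$ has $\int_{P\mathcal{O}}\check{\Lambda}\,d\check{\mu}\in\{\lambda_1,\lambda_2\}$, so when $\lambda_1=\lambda_2$ this integral equals $\tfrac12\lambda_\Sigma$ for \emph{every} such $\check{\mu}$. The main step is the invariance-principle argument. Since $\Phi_{\underline{\tau}}^m$ is volume preserving and $\mu$ is Lebesgue measure we have $\mu_m=\mu$ almost surely, hence $\mathbb{E}h(\mu;\mu_m)=0$; passing to the two-sided canonical extension and disintegrating $\check{\mu}=\int\check{\mu}_x\,\mu(dx)$, a martingale-convergence argument shows that the conditional laws of the fiber structure given the past stabilise, and the identity $\mathbb{E}h(\mu;\mu_m)=0$ together with $2\lambda_1=\lambda_\Sigma$ (which saturates the trivial bound $2\lambda_1\geq\lambda_1+\lambda_2=\lambda_\Sigma$) forces the limiting structure to be a deterministic function of the base point alone. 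This yields the exact fiberwise equivariance
\[
(D_x\Phi_{\underline{\tau}}^m)_*\,\check{\mu}_x=\check{\mu}_{\Phi_{\underline{\tau}}^m(x)}
\qquad\text{for }\mu\text{-a.e.\ }x\ \text{and a.e.\ }\underline{\tau}.
\]

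It then remains to read off the dichotomy and to regularize. If the fiber measure $\check{\mu}_x$ is finitely supported, then for $\mu$-a.e.\ $x$ its atoms of maximal mass form a finite collection of lines $E_x^1,\dots,E_x^p\subset T_x\mathcal{O}$ that the differentials $D_x\Phi_{\underline{\tau}}^m$ permute by some $\sigma$, which is alternative~(b). Otherwise, equivariance of the family $\{\check{\mu}_x\}$ under the rich semigroup of differentials generated by the maps --- its richness a consequence of the exact controllability of $\{\Phi_{\underline{\tau}}^m\}$ (Lemma~\ref{irreducibility}) --- forces each $\check{\mu}_x$ to be the angular measure of a positive-definite inner product $g_x$ on $T_x\mathcal{O}$, determined up to a positive scalar; in these terms the equivariance of $\{\check{\mu}_x\}$ is exactly the conformality relation \eqref{conformal}, which is alternative~(a). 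Finally I would upgrade the $\mu$-a.e.\ and a.s.-in-$\underline{\tau}$ conclusions to the pointwise statement: $\varphi_{\tau}^{(1)},\varphi_{\tau}^{(2)}$ and their differentials depend real-analytically on $(\tau,x)$, $\text{Supp}(\mu)=\mathcal{O}$, and $(H1)$ makes the one-point chain ergodic for $\mu$ (Theorem~\ref{Uniformly geometrically ergodic}), so a set of ``good'' $x$ of positive measure is automatically of full measure; exact controllability transports the invariant object across all of $\mathcal{O}$ and analyticity promotes it to a continuous structure, and, the relations in (a) and (b) being closed conditions in $(x,t)$ that hold on a dense set, they then hold for every $x\in\text{Supp}(\mu)$ and every $t\in[0,T]^{2m}$.

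I expect the main obstacle to be the invariance-principle step: extracting exact fiberwise equivariance of the projective stationary measure from the single scalar identity $2\lambda_1=\lambda_\Sigma$ together with $\mathbb{E}h(\mu;\mu_m)=0$, and then organizing the case analysis so that a non-atomic equivariant family of fiber measures genuinely assembles into a Riemannian metric --- rather than merely into a measurable conformal class supported on a Cantor-type set --- while an atomic family produces finitely many invariant line fields. A secondary difficulty is propagating the regularity upgrade uniformly near the singular set $F$ on which $b_{\underline{\tau}}$ vanishes; this is eased by working throughout on $\mathcal{O}=\mathbb{T}^2\setminus F$, where all the cocycles are smooth.
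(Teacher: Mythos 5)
Your outline follows Baxendale's invariance-principle scheme, which is indeed the engine behind this theorem, but the paper does not reprove it: it cites \cite{B} (Corollary 5.6 for the existence of an invariant conditional distribution $\nu\in\mathscr{M}_\mu(P\mathcal{O})$, and Theorem 6.8 for the resulting dichotomy between a conformal structure and a finite family of permuted line fields), so the only new work the paper needs is the regularity upgrade that makes the conclusion hold at \emph{every} $x\in\text{Supp}(\mu)$ and for \emph{every} $t\in[0,T]^{2m}$. That upgrade is precisely where your proposal has a genuine gap.

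You write that ``analyticity promotes it to a continuous structure, and, the relations in (a) and (b) being closed conditions in $(x,t)$ that hold on a dense set, they then hold for every $x$ and $t$.'' This does not work as stated. The invariant conditional distribution $\{\nu_x\}$ produced by the entropy argument is a priori only a measurable family defined for $\mu$-a.e.\ $x$; neither the analyticity of the vector fields nor exact controllability makes $x\mapsto\nu_x$ continuous, because there is no reason for the disintegration to inherit any pointwise regularity from the flows. The paper's Lemma~\ref{RCPD} is exactly the missing step: for each test function $g\in C(\mathbb{S}^1)$ one shows $G(x):=\int g\,d\nu_x$ is a uniform-on-compacts limit of the continuous functions $Q^n F_{g,n}(x)$, using Lusin's theorem to get nested compacts $K_n$ of measure $\geq 1-1/n$ on which $G$ is continuous, Tietze extension to define $F_{g,n}$, the equivariance identity $\nu_x=\mathbb{E}[(A_{\underline{\tau},x}^n)^{-1}_*\nu_{\Phi_{\underline{\tau}}^n(x)}]$, and crucially the $V$-uniform geometric ergodicity bound $P^n\chi_{K_n^c}(x)\leq \tfrac1n + C V(x)\gamma^n$ from Theorem~\ref{Uniformly geometrically ergodic}. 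Without this quantitative convergence your ``dense set plus closed condition'' argument has no continuous object to propagate. Separately, the non-atomic branch of your dichotomy is slightly off: an equivariant non-atomic family $\{\check{\mu}_x\}$ is not itself the angular measure of an inner product; Baxendale's argument instead shows that the stabilizer of $\check{\mu}_x$ in $GL(2,\mathbb{R})$ modulo scalars is compact and hence conjugate into $O(2)$, and the conjugating structure is what yields $g_x$. These are the two places you would need to either cite \cite{B} directly (as the paper does) or supply the detailed arguments.
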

Theorem \ref{degeneracy} mainly combines results from \cite{B}, leading up to Theorem 6.8 in \cite{B}. Before proving this theorem, we first establish some preliminary notations. Recall the projective chain
\begin{equation*}
	\check{\Phi}_{\underline{\tau}}^m(x,v):=\left(\Phi_{\underline{\tau}}^m(x),\frac{D_x\Phi_{\underline{\tau}}^m(x)v}{|D_x\Phi_{\underline{\tau}}^m(x)v|}\right)
\end{equation*}
on the projective bundle $P\mathcal{O}$ of $\mathcal{O}$, where, by a slight abuse of notation, we use $v$ to denote both an element of the tangent space $T_x\mathcal{O}$ and its equivalence class in $P_x\mathcal{O}$ whenever $v\neq 0$. And denote 
\[
A_{\underline{\tau},x}^m(v):=\frac{D_x\Phi_{\underline{\tau}}^m(x)v}{|D_x\Phi_{\underline{\tau}}^m(x)v|}.
\]
Let $\mathscr{M}(P\mathcal{O})$ denote the space of Borel probability measures on $P\mathcal{O}$, with the weak topology. If $\pi:P\mathcal{O}\to\mathcal{O}$ is the natural bundle map, let $\mathscr{M}_{\mu}(P\mathcal{O})$ denote the set of $\nu\in\mathscr{M}(P\mathcal{O})$ such that $\nu \pi^{-1}=\mu$. Since the bundle has compact fibre, the set $\mathscr{M}_{\mu}(P\mathcal{O})$ is uniformly tight and hence it is a compact subset of $\mathscr{M}(P\mathcal{O})$. For any probability measure $\nu$ on $P\mathcal{O}$ such that $\pi_*\nu=\mu$ (i.e., $\nu\in\mathscr{M}_{\mu}(P\mathcal{O})$), we denote by $\{\nu_x:x\in\mathcal{O}\}$ the regular conditional probability distribution of $\nu$ given $\pi$. The $\nu_x$ are defined for $\mu$-a.e. $x\in\mathcal{O}$ and each $\nu_x$ is a probability measure on $P\mathcal{O}$ such that $\nu_x(P_x\mathcal{O})=1$.

$\nu\in \mathscr{M}_{\mu}(P\mathcal{O})$ is said to be \textbf{invariant conditional distribution} if, for all $m$, $\mu_m\ll \mu$ and $\mu\{x:(A_{\underline{\tau},x}^m)_*\nu_x=\nu_{\Phi_{\underline{\tau}}^m(x)}\}=1$ for almost every $\tau$. By Corollary 5.6 in \cite{B}, assume integrability condition and $\mathbb{E}h(\mu;\mu_m)<\infty$, if $2\lambda_1=\lambda_{\Sigma}$ then there exists $\nu\in\mathscr{M}_{\mu}(P\mathcal{O})$ satisfing invariant conditional distribution. The slight difference between Theorem \ref{degeneracy} and Theorem 6.8 in \cite{B} lies in the fact that, in our case, $V$-uniformly geometrically ergodic of $P$ implies any invariant conditional distribution $\nu$ has a version such that $x\mapsto \nu_x$ is continuous on $\text{Supp}(\mu)$. This permits us to apply the statement and proof of Proposition 6.3 in \cite{B} to our situation, as shown below.

\begin{lem}\label{RCPD}
	 Let $\{\Phi_{\underline{\tau}}^m\}$ be as above. Assume $(H1)$ holds and $2\lambda_1=\lambda_{\Sigma}$, then there exists a version of the regular conditional probability distribution of $\nu$ such that $x\mapsto \nu_x$ is continuous on $\text{Supp}(\mu)$.   
\end{lem}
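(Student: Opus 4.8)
The plan is to construct the continuous version by hand, transporting the single fibre measure attached to one well-chosen base point along submersive controls; the role of $V$-uniform geometric ergodicity here is just to identify $\mu$ with the normalized Lebesgue measure on $\mathcal O$ (Theorem~\ref{Uniformly geometrically ergodic}), whence $\text{Supp}(\mu)=\mathcal O$ and ``$\mu$-a.e.'' and ``Lebesgue-a.e.'' coincide on $\mathcal O$. Recall that, under $2\lambda_1=\lambda_\Sigma$, an invariant conditional distribution $\nu$ exists by Corollary~5.6 in \cite{B}, and that, by definition, for every $m$ the identity $(A_{\underline{\tau},x}^m)_*\nu_x=\nu_{\Phi_{\underline{\tau}}^m(x)}$ holds for $\mu$-a.e.\ $x$ and $\mathbb P$-a.e.\ $\underline{\tau}$ (consistently, since here $\mu_m=\mu$); by Fubini, for $\mu$-a.e.\ fixed $x$ it holds for $\mathbb P$-a.e.\ $\underline{\tau}$. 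I first fix a \emph{good} base point $x_0\in\mathcal O$: one that is good in this sense at every time $m\in\mathbb N$ and at which $\nu_{x_0}$ is defined — a countable intersection of full-measure conditions — and I treat $\nu_{x_0}\in\mathscr M(P_{x_0}\mathcal O)$ as a fixed probability measure on the fibre over $x_0$.

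\textbf{Step 1: reaching every point of $\mathcal O$ from $x_0$ by a submersive control.} Since $(H1)$ gives $\text{Lie}_x(\mathcal X)=T_x\mathcal O$ at every $x\in\mathcal O$ (the rank-$2$ computation in the proof of Theorem~\ref{Uniformly geometrically ergodic}) and $\{\Phi_{\underline{\tau}}^m\}$ is exactly controllable (Lemma~\ref{irreducibility}), I claim that for each $z\in\mathcal O$ there are $M=M(z)\in\mathbb N$ and a control $\underline{\tau}^\ast$ with $\Phi_{\underline{\tau}^\ast}^M(x_0)=z$ at which $\underline{\tau}\mapsto\Phi_{\underline{\tau}}^M(x_0)$ is a submersion; by the constant rank theorem this map then sends a neighbourhood of $\underline{\tau}^\ast$ onto an open neighbourhood $W_z\ni z$. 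To obtain this, steer $x_0$ to $z$ by some control (Lemma~\ref{irreducibility}) and then append at $z$ a ``there-and-back'' loop: Lemma~\ref{surjective} furnishes a submersive leg from $z$ to a nearby point $y^\ast$, with all of its control coordinates in $(0,T)$, and Lemma~\ref{irreducibility} furnishes a further control steering $y^\ast$ back to $z$. Differentiating the composed map only in the coordinates of the submersive leg, and using that each $\varphi_\tau^{(i)}$ is a diffeomorphism of $\mathcal O$, yields a surjection onto $T_z\mathcal O$; concatenating with the initial $x_0\to z$ control establishes the claim.

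\textbf{Step 2: a continuous local version, and globalisation.} Fix $z$ and write $M=M(z)$. On a small neighbourhood $\mathcal N$ of $\underline{\tau}^\ast$ on which $\underline{\tau}\mapsto\Phi_{\underline{\tau}}^M(x_0)$ is a submersion, augment it — as in the proof of Lemma~\ref{absolutely continuous}, choosing the complementary coordinates among the control coordinates so that any coordinates of $\underline{\tau}^\ast$ lying on $\partial[0,T]^{2M}$ are absorbed into them — to a diffeomorphism $\widetilde\Phi\colon\mathcal N\to W_z\times S$, $\underline{\tau}\mapsto(\Phi_{\underline{\tau}}^M(x_0),s(\underline{\tau}))$, with $W_z$ a genuine neighbourhood of $z$. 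Changing variables by $\widetilde\Phi$ in the $\mathbb P$-a.e.\ invariance identity at time $M$ (taken at $x=x_0$) and applying Fubini, for Lebesgue-a.e.\ $y\in W_z$ one has $\nu_y=(A_{\widetilde\Phi^{-1}(y,s),x_0}^M)_*\nu_{x_0}$ for Lebesgue-a.e.\ $s\in S$. The right-hand side is weakly continuous in $s$ (smoothness of $\widetilde\Phi^{-1}$ and of the projective cocycle $A^M$), so, being a.e.\ constant, it is constant in $s$; fixing any $s_\ast\in S$, the map $\bar\nu^z\colon y\mapsto(A_{\widetilde\Phi^{-1}(y,s_\ast),x_0}^M)_*\nu_{x_0}$ is then a genuine weakly continuous section of $\mathscr M(P\mathcal O)$ over $W_z$ with $\bar\nu^z_y=\nu_y$ for $\mu$-a.e.\ $y\in W_z$. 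Finally, extract a countable subcover $\{W_{z_j}\}$ of $\{W_z:z\in\mathcal O\}$ (by second countability), and note that $x_0$ is good at each $M(z_j)$; on every overlap $W_{z_i}\cap W_{z_j}$ the continuous sections $\bar\nu^{z_i}$ and $\bar\nu^{z_j}$ agree $\mu$-a.e., hence everywhere (the overlap is open, $\mu\sim\text{Leb}$, and both are continuous), so they glue to a weakly continuous map $x\mapsto\bar\nu_x$ on $\mathcal O=\text{Supp}(\mu)$ with $\bar\nu_x\in\mathscr M(P_x\mathcal O)$ and $\bar\nu_x=\nu_x$ for $\mu$-a.e.\ $x$. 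Since $\int\bar\nu_x(\cdot)\,\mu(dx)=\nu$, the family $\{\bar\nu_x\}$ is the desired continuous version of the regular conditional probability distribution of $\nu$.

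\textbf{Where the difficulty lies.} The genuine content is the interplay of Steps~1 and~2: the invariance of the disintegration is only an ``almost everywhere in $(\underline{\tau},x)$'' statement, so to extract a truly \emph{continuous} object one must (i) funnel every fibre value through the one fixed measure $\nu_{x_0}$, which forces the assertion that a neighbourhood of \emph{every} $z\in\mathcal O$ is hit by a submersive control issued from $x_0$ — this is where the ``there-and-back'' maneuver is needed and where the pointwise Lie-bracket condition together with exact controllability is used — and (ii) promote the resulting a.e.\ identity to an everywhere identity by differentiating out the auxiliary submersion parameter and invoking continuity. The remaining points — measurability of the disintegration, fullness of the good-point set, and the overlap gluing — are routine.
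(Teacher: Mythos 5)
Your proof is correct, but it takes a genuinely different route from the paper's. The paper proves continuity of $x\mapsto\nu_x$ by the Baxendale--Lusin--Tietze scheme: Lusin's theorem gives compacts $K_n$ with $\mu(K_n)\geq 1-1/n$ on which $x\mapsto\nu_x$ is continuous, Tietze extends $F_g(x,A)=\int g(Av)\,d\nu_x(v)$ off $K_n\times GL_2(\mathbb R)$ to a continuous $F_{g,n}$, and then the invariance $G=Q^nF_g$ together with the \emph{quantitative} rate from $V$-uniform geometric ergodicity (the bound $P^n\chi_{K_n^c}(x)\leq 1/n + CV(x)\gamma^n$) shows $Q^nF_{g,n}\to G$ uniformly on compacts, so $G$ is continuous on a dense full-measure set and extends. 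You instead build the continuous version constructively: fix one good base point $x_0$, use the pointwise H\"ormander condition on $\mathcal O$ plus exact controllability to reach a submersive control from $x_0$ into a neighbourhood of any $z\in\mathcal O$ (the there-and-back maneuver is the right way to upgrade ``some submersive control from $z$'' to ``a submersive control from $x_0$ landing at $z$''), push the single fibre measure $\nu_{x_0}$ forward by the projective cocycle, integrate out the auxiliary submersion parameter via continuity, and glue the resulting local continuous sections. The two arguments use essentially disjoint inputs from the paper: the paper's proof leans on the quantitative geometric ergodicity estimate and does not need controllability, whereas yours leans on controllability and the everywhere Lie-bracket condition and uses Theorem~\ref{Uniformly geometrically ergodic} only to identify $\mu$ with Lebesgue (hence $\operatorname{Supp}(\mu)=\mathcal O$). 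Your version is more elementary where the control structure is rich (as here), but less portable: the paper's route would still work for a uniquely ergodic chain without exact controllability, while yours would not. One minor point worth making explicit when you write this up: when the submersion theorem is invoked to build $\widetilde\Phi$, you should state that the two ``$t^1$'' coordinates are chosen inside the middle leg (all of whose entries lie in $(0,T)$), so that the diffeomorphic box $W_z\times S$ genuinely sits inside the support of $\mathbb P$ even if the steering legs $x_0\to z$ and $y^*\to z$ carry zero entries; you gesture at this but it is the one place where a careless choice would break the Fubini step.
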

\begin{proof}
	First, we prove that there exists an increasing sequence of compact subsets $K_1\subset K_2\subset\cdots$ of $\mathcal{O}$ such that for each $n$, we have that $\mu(K_n)\geq 1-1/n$ and $x\mapsto \nu_x$ is continuous along $x\in K_n$. In fact, since $\mathbb{S}^1$ is a compact space, by Stone-Weierstrass Theorem, there exists $\{g_k\}\subset C(\mathbb{S}^1,\mathbb{R})$ is dense in $C(\mathbb{S}^1,\mathbb{R})$. For any $k$, and any $n$, by Lusin's Theorem, there exists compact subset $C_{n,k}$ of $\mathcal{O}$ such that 
	\[
	\mu(C_{n,k})\geq 1-\frac{1}{n2^k},
	\]
	and the measurable function
	\[
	G_k(x):=\int_{\mathbb{S}^1} g_k(v)d\nu_x(v)
	\]
	is continuous on $C_{n,k}$. Now let
	\[
	C_n:=\bigcap_{k=1}^{\infty}C_{n,k},\quad
    K_n:=\bigcup_{m=1}^n C_m.
	\]
	Then 
	\[
	\mu(K_n)\geq \mu(C_n)\geq 1-\frac{1}{n},
	\]
	and for any $k$, $G_k$ is continuous on $K_n$ for each $n$, and since $\{g_k\}\subset C(\mathbb{S}^1,\mathbb{R})$ is dense in $C(\mathbb{S}^1,\mathbb{R})$, so $x\mapsto \nu_x$ is continuous along $x\in K_n$.
	
	Fixed any $g\in C(\mathbb{S}^1,\mathbb{R})$, denote 
	\[
	F_g(x,A)=\int g(Av)d\nu_x(v).
	\]
	Then $F_g|_{K_n\times GL_2(\mathbb{R})}$ is continuous. By Tietze extension theorem, for each $n$ there exists a function $F_{g,n}: \mathcal{O}\times GL_2(\mathbb{R})\to\mathbb{R}$ such that 
	\[
	F_g|_{K_n\times GL_2(\mathbb{R})}=F_{g,n}|_{K_n\times GL_2(\mathbb{R})},
	\]
	and such that $\|F_{g,n}\|_{L^\infty}\leq\|F_g\|_{L^\infty}$. Lastly, define $\chi_n$ be the indicator function of $K_n\times GL_2(\mathbb{R})$ and observe that $\chi_n F_g=\chi_n F_{g,n}$.
	
	Denote 
	\[
	G(x):=\int_{\mathbb{S}^1} g(v) d\nu_x(v),
	\]
	and for $F: \mathcal{O}\times GL_2(\mathbb{R})\to\mathbb{R}$, denote $Q^n F:\mathcal{O}\to\mathbb{R}$ by
	\[
	Q^nF(x):=\mathbb{E}[F(\Phi_{\underline{\tau}}^n(x), (A_{\underline{\tau},x}^n)^{-1})].
	\]
	By Corollary 5.6 in \cite{B}, there exists $\nu\in\mathscr{M}_{\mu}(P\mathcal{O})$ satisfing invariant conditional distribution.	Let $\mathcal{O}'\subset \mathcal{O}$ be the $\mu$-full measure set such that 
	\[
	\nu_x=\mathbb{E}[(A_{\underline{\tau},x}^n)_*^{-1}\nu_{\Phi_{\underline{\tau}}^n(x)}].
	\]
	Then for any $x\in\mathcal{O}'$, we have 
	\begin{align*}
		|G(x)-Q^nF_{g,n}(x)|&=|Q^nF_g(x)-Q^nF_{g,n}(x)|\\
		&\leq |Q^n[\chi_n F_g](x)-Q^n[\chi_n F_{g,n}](x)|+2\|F_g\|_{L^\infty}|Q^n[1-\chi_n](x)|\\
		&=2\|F_g\|_{L^\infty}|Q^n[1-\chi_n](x)|\\
		&=2\|F_g\|_{L^\infty} P^n\chi_{K_n^c}(x)\\
		&\leq 2\|F_g\|_{L^\infty}\left(\frac{1}{n}+CV(x)\gamma^n\right),
	\end{align*}
    where the last inequality holds because $P$ is $V$-uniformly geometrically ergodic with continuous function $V:\mathcal{O}\to [1,\infty)$ by Theorem \ref{Uniformly geometrically ergodic}. Therefore, for any compact subset $K'$, $G|_{\mathcal{O}'\cap K'}$ is the uniform limit of continuous functions $Q^n F_{g,n}|_{\mathcal{O}'}$.  It then follows from the local compactness of $\mathcal{O}$ that $G|_{\mathcal{O}'}$ is continuous. To prove this Lemma, it sufficies to show that for any $g\in C(\mathbb{S}^1,\mathbb{R})$, there is a continuous function $G':\text{Supp}(\mu)\to\mathbb{R}$ such that $G'=G$ holds $\mu$-a.e.. Since $\mathcal{O}'\subset \text{Supp}(\mu)$ is dense, the proof is complete.
\end{proof}

\begin{proof}[\textbf{Proof of Theorem \ref{degeneracy}}]
    Using Theorem 6.8 in \cite{B}, it suffices to show that there exists a version of the regular conditional probability distribution of $\nu$ such that $x\mapsto \nu_x$ is continuous on $\text{Supp}(\mu)$. By the assumption, this result is a direct consequence of Theorem \ref{Uniformly geometrically ergodic} and Lemma \ref{RCPD}.
\end{proof}

\subsection{Strong Feller}
\quad If Assumption $(H1)$ holds and $2\lambda_1=\lambda_{\Sigma}$, then Theorem \ref{degeneracy} $(a)$ or $(b)$ holds on $\text{Supp}(\mu)$.  In this subsection, we provide some auxiliary results to rule out Theorem \ref{degeneracy} $(a)$ or $(b)$. Let $\{\psi_{\underline{\tau}}^m\}$, $Q$, $M$ and $\mathcal{F}$, etc., be as introduced in Section 2. We say that $Q$ is \textbf{Strong Feller} if $Qg$ is continuous for any bounded, measurable function $g$ on $M$.
\begin{thm}\label{Strong Feller}
	Let $\{\psi_{\underline{\tau}}^m\}$ and $Q$ be as above. Assume $\text{Lie}_{x_0}(\mathcal{F})=T_{x_0} M$ for some $x_0\in M$. Then there exist some $m$ and open neighborhood $U$ of $x_0$ such that the transition kernel $Q^{m}$ is strong Feller on $U$.
\end{thm}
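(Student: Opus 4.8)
The strategy is to prove that on a suitable neighborhood $U$ of $x_0$ the kernel $Q^m(x,\cdot)$ is absolutely continuous with respect to the Lebesgue measure $\mathrm{Leb}_M$ on $M$, with a density $q(x,\cdot)$ that varies continuously with $x$ in $L^1$-norm. Granting this, for every bounded measurable $g$ on $M$ and $x,y\in U$ one has $|Q^m g(x)-Q^m g(y)|\le\|g\|_\infty\|q(x,\cdot)-q(y,\cdot)\|_{L^1}$, so $Q^m g$ is continuous on $U$, i.e.\ $Q^m$ is strong Feller on $U$.

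\textit{Absolute continuity of $Q^m(x,\cdot)$ on $U$.} Using $\mathrm{Lie}_{x_0}(\mathcal F)=T_{x_0}M$ and Lemma \ref{surjective}, fix $m$ and $\tilde t$ in the interior of $[0,T]^{2m}$ so that $t\mapsto\psi^m(x_0,t)$ is a submersion at $\tilde t$. Since $Y_1,Y_2$ are analytic, $(x,t)\mapsto\psi^m(x,t)$ and all entries of $D_t\psi^m(x,t)$ are smooth, and analytic in $t$ for each fixed $x$; as having full rank is an open condition, $D_t\psi^m$ is surjective on a neighborhood $U\times V$ of $(x_0,\tilde t)$. For fixed $x\in U$ the critical set $\mathcal C_x:=\{t\in[0,T]^{2m}:\mathrm{rank}\,D_t\psi^m(x,t)<\dim M\}$ is the common zero set of the maximal minors of $D_t\psi^m(x,\cdot)$, which are real-analytic in $t$ and, since $\mathcal C_x$ misses $V$, not all identically zero; hence $\mathcal C_x$ is closed of Lebesgue measure zero in $[0,T]^{2m}$. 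Writing $Q^m(x,\cdot)=(\psi^m(x,\cdot))_*\lambda$ with $\lambda$ the (uniform) law of $(\tau_1,\dots,\tau_{2m})$, the contribution of $\mathcal C_x$ to the pushforward vanishes, and on $[0,T]^{2m}\setminus\mathcal C_x$ the map $\psi^m(x,\cdot)$ is a submersion, so by the submersion theorem together with the coarea formula $(\psi^m(x,\cdot))_*\lambda=q(x,\cdot)\,d\mathrm{Leb}_M$.

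\textit{$L^1$-continuity of the density.} Fix $x_*\in U$ and $\varepsilon>0$. Choose an open set $\mathcal C^\varepsilon\supseteq\mathcal C_{x_*}$ with $\lambda(\mathcal C^\varepsilon)<\varepsilon$ (possible since $\lambda(\mathcal C_{x_*})=0$) and set $K:=[0,T]^{2m}\setminus\mathcal C^\varepsilon$, a compact set on which $\psi^m(x_*,\cdot)$ is a submersion; by compactness and joint continuity of $D_t\psi^m$, after shrinking $U$ we may assume $\psi^m(x,\cdot)$ is a submersion on $K$ for all $x\in U$. Split $\lambda=\lambda|_K+\lambda|_{K^c}$, giving correspondingly $q(x,\cdot)=q_K(x,\cdot)+q_\varepsilon(x,\cdot)$ (both absolutely continuous, being dominated by $Q^m(x,\cdot)$). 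The remainder satisfies $\|q_\varepsilon(x,\cdot)\|_{L^1}=\lambda(K^c)<\varepsilon$ \emph{uniformly over $x\in U$} — this uniform bound is the crux, and it uses only that a pushforward has the same total mass as its source. For $q_K$, cover $K$ by finitely many boxes $B_i=B_i^1\times B_i^2$ in coordinates $t=(t^1,t^2)$ on which, for every $x\in U$ and $t^2\in B_i^2$, the map $\Theta_{i,x,t^2}\colon t^1\mapsto\psi^m(x,t^1,t^2)$ is a diffeomorphism onto its image; take a partition of unity $\{\chi_i\}$ with $\mathrm{supp}\,\chi_i$ compactly contained in $B_i$, and change variables slicewise (the computation underlying Lemma \ref{absolutely continuous}). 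By the parametrized inverse function theorem the resulting integrand is jointly continuous in $(x,z)$, so $q_K$ is jointly continuous on $U\times M$ with support in a fixed compact set, whence $x\mapsto q_K(x,\cdot)$ is continuous in $L^1$. Combining, $\limsup_{x\to x_*}\|q(x,\cdot)-q(x_*,\cdot)\|_{L^1}\le2\varepsilon$; letting $\varepsilon\downarrow0$ proves the claim, hence the theorem.

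The main obstacle is precisely this last upgrade, from ``$Q^m(x,\cdot)$ is absolutely continuous for each $x\in U$'' to ``$x\mapsto Q^m(x,\cdot)$ is continuous in total variation on $U$'': pointwise absolute continuity plus weak continuity does not suffice in general, and everything rests on the decomposition into the compactly-supported, jointly-continuous part $q_K$ and a remainder whose $L^1$-mass is uniformly small because it equals, on the source side, the $\lambda$-measure of a shrinking open neighborhood of $\mathcal C_{x_*}$ — which is available only because analyticity of $f_1,f_2$ forces the minors of $D_t\psi^m$, and hence $\mathcal C_{x_*}$, to have measure zero.
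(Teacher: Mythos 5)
Your proposal is correct, and the heart of the argument is the same as in the paper: decompose the parameter cube $[0,T]^{2m}$ into a large compact ``good'' set on which $t\mapsto\psi^m_x(t)$ is a submersion with a Jacobian uniformly bounded below for all $x$ near $x_*$, plus a ``bad'' remainder of small $\lambda$-measure; analyticity of the vector fields forces the critical set of $\psi^m_{x_*}$ to have Lebesgue measure zero, and compactness upgrades this to uniformity in $x$. The difference is in the packaging. The paper fixes a bounded measurable $f$, picks the good compact set via Lemma \ref{positivity of det J}, approximates $f$ in $L^1$ by a compactly supported continuous $\tilde f$, and estimates $|Q^m f(x)-Q^m f(x_*)|$ directly through the coarea identity of Lemma \ref{submersion for almost all time}; the final $\delta$ depends on $f$, so the output is exactly the strong Feller property. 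You instead establish total-variation continuity of the kernel $Q^m(x,\cdot)$ on a neighborhood of $x_*$, by showing the density $q(x,\cdot)$ varies continuously in $L^1$: on the good set you get a jointly continuous, compactly supported density via a partition of unity and slicewise change of variables (the computation underlying Lemma \ref{absolutely continuous}), and the bad set contributes uniformly small $L^1$-mass because pushforwards preserve total mass. This is a strictly stronger conclusion than strong Feller, obtained without ever approximating the observable, and it isolates cleanly the point you correctly flag as the crux: the uniform control of the bad set's mass. A couple of small things you are glossing over and should be aware of: (i) ``after shrinking $U$'' should read ``on a smaller neighborhood $U_*\subset U$ of $x_*$''---the shrinking is local to $x_*$, not of the global neighborhood of $x_0$; (ii) the split of the $2m$ time coordinates into $(t^1,t^2)$ with $t^1\in\mathbb R^d$ must be allowed to vary from box to box, and the decomposition $\lambda=\lambda|_K+\lambda|_{K^c}$ is cleaner if rephrased as $\lambda=\sum_i\chi_i\lambda+(1-\sum_i\chi_i)\lambda$ so that the continuous part genuinely has a jointly continuous density (rather than carrying the discontinuous factor $\mathbf 1_K$) while the remainder's mass is still bounded by $\lambda(K^c)$; (iii) the boundary $\partial[0,T]^{2m}$ has measure zero and should either be absorbed into $\mathcal C^\varepsilon$ or avoided by working on the open cube $(0,T)^{2m}$, as the paper does. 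None of these affects the correctness of the argument.
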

In preparation for the proof of this theorem, we will first present a few lemmas. For any $m\in\mathbb{N}$, $x\in M$, consider $\psi_x^m:(0,T)^{2m}\to M$, $\psi_x^m(t):=\psi_t^m(x)$. Let $J{\psi_x^m}(t)$ represent the Jacobian determinant of $\psi_x^m$, then
\begin{equation*}
	|J{\psi_x^m}(t)|=(\det(D\psi_x^m(t)D\psi_x^m(t)^T))^{1/2},
\end{equation*}
where $A^T$ is the transpose of matrix A.
\begin{lem}\label{submersion for almost all time}
	Assume $\text{Lie}_{x_0}(\mathcal{F})=T_{x_0} M$ for some $x_0\in M$, then there exist some $m$ and open neighborhood $U$ of $x_0$ such that for every $x\in U$, the map $t\mapsto \psi^{m}_{x}(t)$ is a submersion for almost every $t\in(0,T)^{2m}$. Moreover, for any bounded, measurable function $f$ on $M$, writting $t=(t_{2m},\cdots,t_1)$ and $dt=dt_{2m}\cdots dt_1$,
	\begin{equation*}
		Q^mf(x)=\int_{M}f(y)\left(\int_{\{t\in(0,T)^{2m}:\psi_{t}^{m}(x)=y\}}\frac{1}{T^{2m}|J{\psi_x^m}(t)|}\mathcal{H}^{2m-d}(dt)\right)\text{Leb}_M(dy),\ \ \text{for any} \ x\in U,
	\end{equation*}
	where $\mathcal{H}^{2m-d}$ is $(2m-d)$-dimensional Hausdorff measure on $(0,T)^{2m}$.
\end{lem}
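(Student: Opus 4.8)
The plan is to combine Lemma \ref{surjective} with Sard's theorem and the coarea formula. First I would invoke Lemma \ref{surjective}: since $\text{Lie}_{x_0}(\mathcal{F})=T_{x_0}M$, there is an integer $m$, a point $\tilde x$ near $x_0$, and a time $\tilde t\in\mathbb{R}_+^{2m}$ with $\psi^m(x_0,\tilde t)=\tilde x$ at which $t\mapsto\psi^m(x_0,t)$ is a submersion; after rescaling we may take $\tilde t\in(0,T)^{2m}$. The set of $(x,t)\in M\times(0,T)^{2m}$ where $D_t\psi^m(x,t)$ is surjective is open (it is the non-vanishing locus of the continuous function $|J\psi_x^m(t)|$), so it contains a product neighborhood $U\times W$ of $(x_0,\tilde t)$. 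For each fixed $x\in U$ the map $\psi_x^m$ is then a submersion at every $t\in W$; to upgrade "at some point" to "at almost every point" I would apply Sard's theorem to $\psi_x^m:(0,T)^{2m}\to M$ (the map is $C^\infty$ since the $f_i$ are analytic), which says the set of critical values has Lebesgue measure zero in $M$. The set of critical points is exactly $\{t:|J\psi_x^m(t)|=0\}$; I need this to have measure zero in parameter space, which does not follow from Sard's theorem alone. This is the point where analyticity is essential: $t\mapsto |J\psi_x^m(t)|^2=\det(D\psi_x^m(t)D\psi_x^m(t)^T)$ is a real-analytic function of $t$ which is not identically zero (it is nonzero on $W$), hence its zero set has zero Lebesgue measure. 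That gives the first assertion.

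For the integral formula I would apply the coarea formula to the smooth map $\psi_x^m:(0,T)^{2m}\to M$ (dimension $2m\ge d$), which for any nonnegative Borel $h$ on $(0,T)^{2m}$ gives
\[
\int_{(0,T)^{2m}} h(t)\,|J\psi_x^m(t)|\,dt=\int_M\left(\int_{(\psi_x^m)^{-1}(y)}h(t)\,\mathcal{H}^{2m-d}(dt)\right)\text{Leb}_M(dy).
\]
Starting from $Q^mf(x)=\mathbb{E}[f(\psi_{\underline\tau}^m(x))]=T^{-2m}\int_{(0,T)^{2m}}f(\psi_x^m(t))\,dt$ (using that the $2m$ durations $\tau_1,\dots,\tau_{2m}$ are i.i.d. uniform on $[0,T]$), I would write $f(\psi_x^m(t))=\big(f(\psi_x^m(t))\,|J\psi_x^m(t)|^{-1}\big)\cdot|J\psi_x^m(t)|$ — legitimate for a.e.\ $t$ since $|J\psi_x^m(t)|>0$ a.e.\ by the first part — and feed $h(t)=T^{-2m}f(\psi_x^m(t))|J\psi_x^m(t)|^{-1}$ into the coarea formula. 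On the fiber $(\psi_x^m)^{-1}(y)$ one has $f(\psi_x^m(t))=f(y)$, so $f(y)$ pulls out of the inner integral, yielding exactly the claimed expression. One should first treat $f\ge 0$ and then split a general bounded measurable $f$ into positive and negative parts; boundedness of $f$ and of $M$ (so $\text{Leb}_M(M)<\infty$) guarantee all integrals are finite and the manipulation is justified.

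The main obstacle is the measure-zero claim for the critical set in parameter space: Sard's theorem controls critical \emph{values} in $M$, not critical \emph{points} in $(0,T)^{2m}$, so one genuinely needs the real-analyticity of $t\mapsto |J\psi_x^m(t)|$ (inherited from analyticity of $f_1,f_2$ and hence of the flow maps $\varphi^{(i)}_\tau$) together with the fact that this function is not identically zero — the latter being precisely what Lemma \ref{surjective} provides. A secondary technical point is checking that the neighborhood $U$ and the integer $m$ can be chosen uniformly, i.e.\ that nonvanishing of the Jacobian at $(x_0,\tilde t)$ persists on an open product neighborhood; this is just continuity of $(x,t)\mapsto|J\psi_x^m(t)|$. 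Everything else is a bookkeeping application of the coarea formula and Fubini/Tonelli.
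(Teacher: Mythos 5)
Your proposal is correct and follows essentially the same route as the paper: invoke Lemma \ref{surjective} to produce $m$ and a parameter $\tilde t\in(0,T)^{2m}$ where $\psi_{x_0}^m$ is a submersion, use continuity of $(x,t)\mapsto |J\psi_x^m(t)|$ to extract an open neighborhood $U$ of $x_0$, then use real-analyticity of $t\mapsto |J\psi_x^m(t)|$ (which is nonzero but not identically zero for each fixed $x\in U$) to conclude the critical set in parameter space has Lebesgue measure zero, and finally the coarea formula for the integral identity. Your explicit observation that Sard's theorem controls critical values in $M$ but not critical points in $(0,T)^{2m}$, and that analyticity is therefore essential, is exactly the right point and matches the paper's use of the zero-set result for real-analytic functions; the paper does also invoke Sard's lemma, but only in the secondary role of discarding the measure-zero set of critical values when applying the coarea formula, which your write-up also handles correctly.
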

\begin{proof}
	It follows from Theorem \ref{surjective} that there exist $m$ and $t\in (0,T)^{2m}$ such that $\psi_{x_0}^{m}: (0,T)^{2m}\to M$ is a submersion at $t$. Define $g:M\times (0,T)^{2m}\to\mathbb{R}$ by 
	\begin{equation*}
		g(x,t):=|J{\psi_x^{m}}(t)|.
	\end{equation*}
	Then $g(x_0,t)>0$. With the function $g_{t}(\cdot) = g(\cdot, t): M\to \mathbb{R}$ being continuous, it naturally follows that the set $U= g_{t}^{-1}((0, \infty))$ defines an open neighborhood around $x_0$ in $M$. Now since the vector fields (and hence their flows) are analytic and analyticity is preserved under addition, multiplication, composition and differentiation, $g_{x}(\cdot):=g(x,\cdot):(0,T)^{2m}\to\mathbb{R}$ is analytic for all $x\in M$. Consequently, based on a result that is widely acknowledged, with instances provided in \cite{M} and \cite[Lemma 5.22]{KP}, it follows that the set $\mathcal{N}(g_{x}):=\{t\in(0,T)^{2m}:g_{x}(t)=0\}$ is a zero Lebesgue measure set for every $x\in U$. Therefore, for every $x\in U$, the map $t\mapsto \psi^{m}_{x}(t)$ is a submersion for almost every $t\in (0,T)^{2m}$. The first part of the lemma now follows as above. 
	
	Next, we proceed to prove the `moreover' part. For any bounded, measurable function $f$ on $M$, $x\in U$, writting $t=(t_{2m},\cdots,t_1)$ and $dt=dt_{2m}\cdots dt_1$,
	\begin{align*}
		Q^mf(x)=&\int_{(0,T)^{2m}}\frac{f(\psi_{t}^{m}(x))}{T^{2m}}dt\\
		=&\int_{M}\left(\int_{\{t\in(0,T)^{2m}:\psi_{t}^m(x)=y\}}\frac{f(\psi_{t}^{m}(x))}{T^{2m}|J{\psi_x^{m}}(t)|}\mathcal{H}^{2m-d}(dt)\right)\text{Leb}_M(dy)\\
		=&\int_{M}f(y)\left(\int_{\{t\in(0,T)^{2m}:\psi_{t}^{m}(x)=y\}}\frac{1}{T^{2m}|J{\psi_x^{m}}(t)|}\mathcal{H}^{2m-d}(dt)\right)\text{Leb}_M(dy), 
	\end{align*}
	where the second equality follows from the coarea formula, as can be found, for example, in \cite{FH}. One caveat in our application of the coarea formula is that our assumption says $J{\psi_x^{m}}(t)$ is nonzero only almost-surely. This is not an issue however since $\{y:\psi_{t}^{m}(x)=y\ \text{and}\ J{\psi_x^{m}}(t)=0\}$ has measure zero in $M$ by Sard's lemma.
\end{proof}
Considering any subset $K$ from $(0,T)^{2m}$, any point $x$ in $M$, we define
\begin{equation*}
	A_{K}(x):=\{t\in K: |J{\psi_x^m}(t)|=0\}.
\end{equation*}
\begin{lem}\label{positivity of det J}
	Assume $K$ is a compact set in $(0,T)^{2m}$, $U$ is an open subset of $M$, and the matrix product $D\psi_x^m(t)D\psi_x^m(t)^T$ is almost surely invertible for every $x$ in $U$. Then for any $x_*\in U$ and $\varepsilon>0$ there exists an open set $V\subset (0,T)^{2m}$ and a $\delta>0$ such that 
	\begin{equation*}
		\mathbb{P}(V)<\varepsilon,\ \ B_\delta(x_*)\subset U_0 \ \ \text{and}\ \ A_{K}(x)\subset V\ \text{for any} \ x\in B_\delta(x_*).
	\end{equation*}
	Moreover, there exists an open neighborhood $W$ of $K\cap V^c$ such that 
	\begin{equation*}
		\inf\{|J{\psi_x^m}(t)|:x\in B_\delta(x_*), t\in W\}>0.
	\end{equation*}
\end{lem}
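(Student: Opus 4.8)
The plan is to study the jointly continuous function $g(x,t):=|J{\psi_x^m}(t)|$ on $U\times(0,T)^{2m}$. Joint continuity follows exactly as in Lemma \ref{submersion for almost all time}: each $\psi_x^m$ is a composition of flows of the analytic vector fields in $\mathcal{F}$, so $(x,t)\mapsto D\psi_x^m(t)$ is jointly smooth and $g=(\det(D\psi_x^m(t)D\psi_x^m(t)^T))^{1/2}$ is jointly continuous. By hypothesis $D\psi_{x_*}^m(t)D\psi_{x_*}^m(t)^T$ is invertible for Lebesgue–a.e.\ $t$, so the set $A_K(x_*)=\{t\in K:g(x_*,t)=0\}$ is compact (being the intersection of the compact set $K$ with the closed zero set of $g(x_*,\cdot)$) and has Lebesgue measure zero. (I read the symbol $U_0$ in the statement as $U$.)

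First I would invoke outer regularity of Lebesgue measure: since $A_K(x_*)$ is null, choose an open set $V\subset(0,T)^{2m}$ with $A_K(x_*)\subset V$ and $\mathbb{P}(V)<\varepsilon$. The set $K\cap V^c$ is then compact and disjoint from the zero set of $g(x_*,\cdot)$, so there is $\eta>0$ with $g(x_*,t)\ge 2\eta$ for every $t\in K\cap V^c$.

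Next I would upgrade this to a neighborhood in $x$ by a tube-lemma argument. For each $t_0\in K\cap V^c$, joint continuity of $g$ at $(x_*,t_0)$ gives $\delta_{t_0}>0$ and an open neighborhood $W_{t_0}\ni t_0$ in $(0,T)^{2m}$ with $g>\eta$ on $B_{\delta_{t_0}}(x_*)\times W_{t_0}$. Cover the compact set $K\cap V^c$ by finitely many $W_{t_1},\dots,W_{t_N}$, put $W:=\bigcup_{j}W_{t_j}$, and let $\delta:=\min_j\delta_{t_j}$, shrunk further if necessary so that $B_\delta(x_*)\subset U$. Then $W$ is an open neighborhood of $K\cap V^c$ and $\inf\{|J{\psi_x^m}(t)|:x\in B_\delta(x_*),\,t\in W\}\ge\eta>0$, which is the ``moreover'' assertion. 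Finally, for $x\in B_\delta(x_*)$ and $t\in A_K(x)$ we have $g(x,t)=0$; if $t\notin V$ then $t\in K\cap V^c\subset W$, forcing $g(x,t)\ge\eta>0$, a contradiction, so $A_K(x)\subset V$, as required.

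The only genuine input is the joint continuity of $g$, and that is routine — smooth dependence of ODE solutions on time and initial data, together with continuity of the determinant and the square root. The rest is outer regularity of Lebesgue measure plus a standard finite-subcover/tube argument, so I do not expect any hidden difficulty; the one point to be slightly careful about is ensuring all the shrinkings of $\delta$ are compatible (finitely many of them, so this is harmless) and that $B_\delta(x_*)$ stays inside $U$.
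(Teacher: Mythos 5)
Your proof is correct, and the underlying ingredients — compactness of $K\cap V^c$, joint continuity of $g(x,t)=|J\psi_x^m(t)|$, outer regularity of the measure, the observation that $A_K(x_*)$ is null and compact — are the same as in the paper. The organization, however, is different and somewhat cleaner: you establish the ``moreover'' bound $\inf g \ge \eta > 0$ on $B_\delta(x_*)\times W$ first by a standard tube-lemma finite-cover argument, and then deduce the inclusion $A_K(x)\subset V$ for $x\in B_\delta(x_*)$ as an immediate corollary (a zero of $g(x,\cdot)$ in $K\cap V^c\subset W$ would contradict the uniform lower bound). The paper instead proves $A_K(x)\subset V$ first, by a contradiction argument extracting a convergent subsequence of zeros $t_n\in K\cap V^c$ from a hypothetical sequence $x_n\to x_*$, and then runs a second, separate argument for the ``moreover'' part: it defines $K'$ as the closure of $\bigcup_{x\in B_{\delta/2}(x_*)}g_x^{-1}((0,c))$ and separates the two closed sets $K\cap V^c$ and $K'$ by open sets. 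Your tube-lemma route gives both conclusions at once and sidesteps the somewhat delicate step of verifying that $K\cap V^c$ and $K'$ are actually disjoint; in that sense it is the tighter of the two arguments. One cosmetic point: you correctly read $U_0$ in the statement as a typo for $U$, and your identification of ``almost surely invertible'' with ``Lebesgue-a.e.\ invertible'' is justified here because $\mathbb{P}$ is uniform on $[0,T]^{2m}$ and hence mutually absolutely continuous with Lebesgue measure on that box.
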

\begin{proof}
	Fix any $x_*\in U$ and any $\varepsilon>0$.  Since the matrix product $D\psi_x^m(t)D\psi_x^m(t)^T$ is almost surely invertible for every $x$ in $U$, $A_{K}(x)$ has Lebesgue measure zero for all $x\in U$. In particular, since $\mathbb{P}$ is absolutely continuous with respect to Lebesgue measure, there exists an open neighborhood $V$ of $A_{K}(x_*)$ such that $\mathbb{P}(V)<\varepsilon$. We will next prove our conclusion using the method of contradiction. Suppose there is a $\delta>0$ such that $B_\delta(x_*)\subset U$ and there exists a sequence $\{x_n\}\subset B_\delta(x_*)$ converging to $x_*$ such that $A_{K}(x_n)$ is not contained in $V$, that is, for each $x_n$ there is a $t_n$ in $K\cap V^c$ satisfying $|J{\Phi_{x_n}^m}(t_n)|=0$. Then since $K\cap V^c$ is compact, there is a subsequence $\{t_{n_k}\}$ which converges to some $t_*\in K\cap V^c$. It follows from the continuous of $(x,t)\mapsto |J{\psi_x^m}(t)|$ that
	\begin{equation*}
		0=|J{\psi_{x_{n_k}}^m}(t_{n_k})|=|J{\psi_{x_*}^m}(t_*)|.
	\end{equation*} 
	But this implies that $t_*$ is in $A_{K}(x_*)$, which contradicts the fact that $A_{K}(x_*)\cap (K\cap V^c)=\emptyset$. 
	
	By the preceding argument and the continuous of $(x,t)\mapsto |J{\psi_x^m}(t)|$, we have
	\begin{equation*}
		\inf\{	|J{\psi_x^m}(t)| : x\in B_{\delta/2}(x_*), \ t\in K\cap V^c\}\geq 2c, 
	\end{equation*}
	for some $c>0$. Set $g_x(t):=|J{\psi_x^m}(t)|$ and let $K'$ be the closure in $\mathbb{R}_+^{mn}$ of 
	\begin{equation*}
		\bigcup_{x\in B_{\delta/2}(x_*)}g_x^{-1}((0,c)).
	\end{equation*}
	Since $K\cap V^c$ and $K'$ are closed and disjoint, they can be seperated by disjoint open sets. By continuity and construction, there exists an open neighborhood $W$ of $K\cap V^c$ satisfies
	\begin{equation*}
		\inf\{|J{\psi_x^m}(t)|:x\in B_{\delta/2}(x_*), t\in W\}\geq c >0.
	\end{equation*}
\end{proof}
\begin{proof}[\textbf{Proof of Theorem \ref{Strong Feller}}]
	Since $\text{Lie}_{x_0}(\mathcal{F})=T_{x_0}M$ for some $x_0\in M$, by the first part of Lemma \ref{submersion for almost all time}, there exist some $m$ and open neighborhood $U$ of $x_0$ such that for every $x\in U$, $D\psi_x^{m}(t)D\psi_x^{m}(t)^T$ is invertible for almost every $t\in(0,T)^{2m}$. Consider a bounded, measurable function $f$ on $M$. The scenario where $f$ is identically zero is straightforward and requires no further discussion. Therefore, let us focus on the alternative case. Specifically, we define the positive constant $C/6$ to be equal to the supremum norm of $f$, denoted as $0 < |f|_{\infty} =: C/6$. Fix any $x_*\in U$ and any $\varepsilon>0$. Since $\mathbb{P}$ is absolutely continuous with respect to Lebesgue measure, there exists a compact subset $K$ of $(0,T)^{2m}$ such that $\mathbb{P}(K^c)<\varepsilon/C$. By Lemma \ref{positivity of det J}, there exists an open set $V\subset (0,T)^{2m}$ and $\delta_1>0$ such that $\mathbb{P}(V)<\varepsilon/C$, $B_{\delta_1}(x_*)\subset U$ and an open neighborhood $W$ of $K\cap V^c$ such that
	\begin{equation}\label{44}
		\inf\{|J\psi_x^{m}(t)| : x\in B_{\delta_1}(x_*), \ t\in W\}=:c>0. 
	\end{equation}
	Since $(0,T)^{2m}=K^c\cup(K\cap V)\cup(K\cap V^c)$, for any $x\in U_0$, we have 
	\begin{align}\label{45}
		Q^{m}f(x)-Q^{m}f(x_*)=&\mathbb{E}\left([f(\psi_{\tau}^{m}(x))-f(\psi_{\tau}^{m}(x_*))]\chi_{K^c}\right)+\mathbb{E}\left([f(\psi_{\tau}^{m}(x))-f(\psi_{\tau}^{m}(x_*))]\chi_{K\cap V}\right)\nonumber\\
		&~~~~~~~+\mathbb{E}\left([f(\psi_{\tau}^{m}(x))-f(\psi_{\tau}^{m}(x_*))]\chi_{K\cap V^c}\right).
	\end{align}
	Due to the selection of $K$, it follows that
	\begin{equation}\label{46}
		|\mathbb{E}\left([f(\psi_{\tau}^{m}(x))-f(\psi_{\tau}^{m}(x_*))]\chi_{K^c}\right)|\leq 2\|f\|_\infty\mathbb{P}(K^c)<\varepsilon/3.
	\end{equation}
	Similarly, the choice of $V$ ensures that
	\begin{equation}\label{47}
		|\mathbb{E}\left([f(\psi_{\tau}^{m}(x))-f(\psi_{\tau}^{m}(x_*))]\chi_{K\cap V}\right)|\leq 2\|f\|_\infty\mathbb{P}(V)<\varepsilon/3.
	\end{equation}
	To handle the term involving $\chi_{K\cap V^c}$, let us define $L^1 := L^1(\text{Leb}_M)$, where $\text{Leb}_M$ denotes the volume form on $M$. We then select a compactly supported, continuous function $\tilde{f}$ on $M$ such that the $L^1$ norm of the difference between $\tilde{f}$ and $f$, $\|\tilde{f} - f\|_{L^1}$, is less than $\sqrt{c}\varepsilon/12$. Note that this can always be done since compactly supported, continuous functions are dense in $L^1$. By adding and subtracting $\tilde{f}\circ\psi_{\tau}^{m}$ appropriately,
	\begin{align}\label{48}
		\mathbb{E}\left([f(\psi_{\tau}^{m}(x))-f(\psi_{\tau}^{m}(x_*))]\chi_{K\cap V^c}\right)=&\mathbb{E}\left([f(\psi_{\tau}^{m}(x))-\tilde{f}(\psi_{\tau}^{m}(x))]\chi_{K\cap V^c}\right)\nonumber\\
		&+\mathbb{E}\left([\tilde{f}(\psi_{\tau}^{m}(x))-\tilde{f}(\psi_{\tau}^{m}(x_*))]\chi_{K\cap V^c}\right)\nonumber\\
		&+\mathbb{E}\left([\tilde{f}(\psi_{\tau}^{m}(x_*))-f(\psi_{\tau}^{m}(x_*))]\chi_{K\cap V^c}\right).
	\end{align} 
	Since $\tilde{f}\circ\psi_{\tau}^{m}$ is continuous, there exists $\delta_2>0$ such that for every $x\in B_{\delta_2}(x_*)$, 
	\begin{equation*}
		|\tilde{f}(\psi_{\tau}^{m}(x))-\tilde{f}(\psi_{\tau}^{m}(x_*))|<\frac{\varepsilon}{6}.
	\end{equation*}
	So for all $x\in B_{\delta_2}(x_*)$, 
	\begin{equation}\label{49}
		\left|\mathbb{E}\left([\tilde{f}(\psi_{\tau}^{m}(x))-\tilde{f}(\psi_{\tau}^{m}(x_*))]\chi_{K\cap V^c}\right)\right|<\frac{\varepsilon}{6}.
	\end{equation}
	Let's focus on the first and third terms on the right side of equation (\ref{48}), writting $t=(t_{2m},\cdots,t_1)$ and $dt=dt_{2m}\cdots dt_1$, it is established that for all $x\in B_{\delta_1}(x_*)$,
	\begin{align}\label{410}
		&\left|\mathbb{E}\left([f(\psi_{\tau}^{m}(x))-\tilde{f}(\psi_{\tau}^{m}(x))]\chi_{K\cap V^c}\right)\right|\nonumber\\
		=&\left|\int_{W} \frac{[f(\psi_{t}^{m}(x))-\tilde{f}(\psi_{t}^{m}(x))]\chi_{K\cap V^c}}{T^{2m}}dt\right|\nonumber\\
		=&\left|\int_{M}\left(\int_{W\cap \{t\in(0,T)^{2m}:\psi_{t}^{m}(x)=\tilde{x}\}}\frac{[f(\psi_{t}^{m}(x))-\tilde{f}(\psi_{t}^{m}(x))]\chi_{K\cap V^c}}{T^{2m}|J{\psi_x^{m}}(t)|}\mathcal{H}^{2m-d}(dt)\right)\text{Leb}_M(d\tilde{x})\right|\nonumber\\
		=&\left|\int_{M}\left(f(\tilde{x})-\tilde{f}(\tilde{x})\right)\left(\int_{W\cap \{t\in(0,T)^{2m}:\psi_{t}^{m}(x)=\tilde{x}\}} \frac{\chi_{K\cap V^c}}{T^{2m} |J{\psi_x^{m}}(t)|}\mathcal{H}^{2m-d}(dt)\right)\text{Leb}_M(d\tilde{x})\right|\nonumber\\
		\leq&\frac{1}{\sqrt{c}}\|f-\tilde{f}\|_{L^1}<\frac{\varepsilon}{12},
	\end{align}
	where the first equality is due to $K\cap V^c$ being a subset of $W$, the second is confirmed by Lemma \ref{submersion for almost all time}, and the first inequality is established through equation (\ref{44}). Subsequently, we set $\delta$ to be the minimum of $\delta_1$ and $\delta_2$, by substituting equations (\ref{49}) and (\ref{410}) into equation (\ref{48}) and considering all $x$ in the ball $B_\delta(x_*)$, we obtain
	\begin{equation}\label{411}
		\left|\mathbb{E}\left([f(\psi_{\tau}^{m}(x))-f(\Phi_{\tau}^{m}(x_*))]\chi_{K\cap V^c}\right)\right|<\frac{\varepsilon}{3}.
	\end{equation}
	Ultimately, substituting Equations (\ref{46}), (\ref{47}) and (\ref{411}) into Equation (\ref{45}) yields,
	\begin{equation*}
		\left|Q^{m}f(x)-Q^{m}f(x_*)\right|<\varepsilon,
	\end{equation*}
	for every $x\in B_\delta(x_*)$. Hence, $Q^mf$ is continuous at every point $x_*$ in $U$, establishing that $Q^{m}$ possesses the strong Feller property on $U$.
\end{proof}

\subsection{Ruling out the degeneracy}
\quad The primary mechanism for ruling out Theorem \ref{degeneracy} $(a)$ is shearing, while Theorem \ref{degeneracy} $(b)$ is ruled out by assumption $(H2)$.
\begin{lem}\label{ruling out a}
	Let $\{\Phi_{\underline{\tau}}^m\}$ be as above. Assume $(H1)$ holds. Then Theorem \ref{degeneracy} $(a)$ does not hold.
\end{lem}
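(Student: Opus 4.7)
The plan is to exploit the unipotent nature of the shear derivatives to manufacture a non-identity unipotent element of the ostensible $g_{x_0}$-isometry group at a carefully chosen base point, contradicting the compactness of the isometry group of any 2-dimensional positive-definite inner product. Because $\Phi_{\underline{\tau}}^m$ is volume-preserving, the factor $\det D_x\Phi_t^m(x)$ in (\ref{conformal}) equals $1$, so the assertion of (a) reduces to $D_x\Phi_t^m$ being a linear isometry $(T_x\mathcal{O}, g_x) \to (T_{\Phi_t^m(x)}\mathcal{O}, g_{\Phi_t^m(x)})$ for every admissible $x, t, m$.

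First I would select a base point $x_0 = (q_0, p_0) \in \text{Supp}(\mu)$ with $f_1(p_0) \neq 0$ and $f_1'(p_0) \neq 0$. Such a point exists because $\mu$ is the Lebesgue probability measure on $\mathcal{O}$ (hence has full support by Theorem \ref{Uniformly geometrically ergodic}), while $f_1$ being analytic and non-constant on $\mathbb{S}^1$ forces $\{p \in \mathbb{S}^1 : f_1(p) f_1'(p) \neq 0\}$ to be open and dense. Taking $m = 1$ and $t = (\tau, 0)$ gives $\Phi_t^1 = \varphi_\tau^{(1)}$, whose derivative at $x_0$ is the unipotent matrix
\[
N_\tau := \begin{pmatrix} 1 & \tau f_1'(p_0) \\ 0 & 1 \end{pmatrix}.
\]
Choosing $T > 2\pi/|f_1(p_0)|$ (permitted by the standing assumption that $T$ be sufficiently large) and picking $\tau_1 \neq \tau_2$ in $[0,T]$ with $\tau_2 - \tau_1 = 2\pi/f_1(p_0)$ arranges $\varphi_{\tau_1}^{(1)}(x_0) = \varphi_{\tau_2}^{(1)}(x_0)$, so that both $N_{\tau_1}$ and $N_{\tau_2}$ are $g$-isometries into the tangent space at the same target point.

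Composing these two isometries, the map $A := N_{\tau_2}^{-1} N_{\tau_1}$ must be a $g_{x_0}$-isometry of $T_{x_0}\mathcal{O}$; a direct computation gives
\[
A = \begin{pmatrix} 1 & (\tau_1 - \tau_2)f_1'(p_0) \\ 0 & 1 \end{pmatrix},
\]
which is non-identity unipotent since $\tau_1 \neq \tau_2$ and $f_1'(p_0) \neq 0$. However, the iterates $A^k$ have operator norm tending to infinity as $|k|\to\infty$, while the isometry group of any positive-definite inner product on $\mathbb{R}^2$ is conjugate to $O(2)$ and hence compact. This contradiction rules out (a). The only conceptual step that requires any care is arranging the orbit-return $\varphi_{\tau_1}^{(1)}(x_0) = \varphi_{\tau_2}^{(1)}(x_0)$ within $[0,T]$, which is precisely why the paper requires $T$ to be chosen sufficiently large; everything else is elementary linear algebra and I do not anticipate further obstacles.
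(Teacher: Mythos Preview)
Your proof is correct and follows essentially the same strategy as the paper: choose a base point with $f_1'(p_0)\neq 0$, use the periodicity of the horizontal shear $\varphi_\tau^{(1)}$ to produce returns to a common target, and extract a contradiction from the resulting unipotent derivatives being $g$-isometries. The only difference is in the endgame: the paper takes \emph{three} return times $t_1,t_2,t_3$ with $\varphi_{t_k}^{(1)}(x_0)=x_0$ and observes that $g_{x_0}(N_{t_k}u,N_{t_k}u)=g_{x_0}(u,u)$ is a nontrivial quadratic in $t_k$ (hence cannot hold for three distinct values), while you take \emph{two} return times and argue that $N_{\tau_2}^{-1}N_{\tau_1}$ is a non-identity unipotent in the compact group of $g_{x_0}$-isometries---an equivalent and arguably cleaner formulation that also requires a slightly smaller lower bound on $T$.
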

\begin{proof} 
	Since $f_1 \in C^{\omega}(\mathbb{S}^1, \mathbb{R})$, $i = 1, 2$ are non-constant, there exists a point $x_0=(q_0, p_0)\in \text{Supp}(\mu)$ such that $f_1'(p_0)\neq 0$. Given the flow $\varphi_t^{(1)}(x)=(q+t f_1(p),p)$, a direct computation yields
	\[ D\varphi_t(x) = \begin{pmatrix} 
		1 & t f_1'(p) \\ 
		0 & 1 
	\end{pmatrix}.	\]
    Now suppose Theorem \ref{degeneracy} $(a)$ holds, then there exists a Riemannian structure $\{g_x:x\in \text{Supp}(\mu)\}$ on $\text{Supp}(\mu)$ of $\mathcal{O}$ such that (\ref{conformal}) holds for all $m$, $\underline{t}\in[0,T]^{2m}$, $x\in \text{Supp}(\mu)$ and $u,v\in T_x\mathcal{O}$. In particular, 
	\begin{equation*}
		g_{\varphi_t^{(1)}(x_0)}(D\varphi_t^{(1)}(x_0)u,D\varphi_t^{(1)}(x_0)v)=g_{x_0}(u,v),
	\end{equation*}
    for all $t\in [0, T]$ and $u,v\in T_{x_0}\mathcal{O}$. For sufficiently large $T$, there exist distinct $t_1, t_2, t_3\in [0,T]$ such that $\varphi_{t_1}^{(1)}(x_0)=\varphi_{t_2}^{(1)}(x_0)=\varphi_{t_3}^{(1)}(x_0)=x_0$. Choose $v=u$ and write $u=\Big(
	\begin{array}{cc}
		u_1\\
		u_2
	\end{array} \Big)$, then for each $k=1, 2, 3$, we have
    \begin{align*}
    	g_{x_0}(u,u)
    	&= g_{\varphi_{t_k}^{(1)}(x_0)}\left(D\varphi_{t_k}^{(1)}(x_0) u,\ D\varphi_{t_k}^{(1)}(x_0) u\right) \\
    	&= g_{x_0}\left(D\varphi_{t_k}^{(1)}(x_0) u,\ D\varphi_{t_k}^{(1)}(x_0) u\right) \\
    	&= g_{x_0}\left( 
    	\begin{pmatrix} 
    		u_1 + t_k f_1'(p_0) u_2 \\ 
    		u_2 
    	\end{pmatrix}
    	,\ 
    	\begin{pmatrix} 
    		u_1 + t_k f_1'(p_0) u_2 \\ 
    		u_2 
    	\end{pmatrix}
    	\right) \\
    	&= t_k^2 g_{x_0}\left( 
    	\begin{pmatrix} 
    		f_1'(p_0) u_2 \\ 
    		0 
    	\end{pmatrix}
    	,\ 
    	\begin{pmatrix} 
    		f_1'(p_0) u_2 \\ 
    		0 
    	\end{pmatrix}
    	\right) + t_k C_1 + C_2,
    \end{align*}
	where $C_1$ and $C_2$ are independent of $t_k$. Since $f_1'(p_0)\neq 0$, we can choose $u$ such that $f_1'(p)u_2\neq 0$. Consequently, the last equation is a nontrivial quadratic equation in $t_k$ with at most two roots, which contradicts the existence of three distinct values $t_1, t_2, t_3$ satisfying the equation.
\end{proof}

\begin{lem}\label{ruling out b}
	Let $\{\Phi_{\underline{\tau}}^m\}$ be as above. Assume $(H1)$ and $(H2)$ hold. Then Theorem \ref{degeneracy} $(b)$ does not hold.
\end{lem}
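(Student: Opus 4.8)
The plan is to rule out Theorem \ref{degeneracy}(b) by the same shearing mechanism used for part (a), now applied fibrewise on the projective bundle $P\mathcal{O}$. Suppose for contradiction that (b) holds. Since $\dim\mathcal{O}=2$, the $E^i_x$ are lines, i.e.\ points of the fibre $P_x\mathcal{O}\cong\mathbb{RP}^1$, defined for every $x\in\mathrm{Supp}(\mu)=\mathcal{O}$, and for every $m$ and every $t\in[0,T]^{2m}$ the differential $D_x\Phi^m_t$ carries the finite set $\Sigma_x:=\{E^1_x,\dots,E^p_x\}$ onto $\Sigma_{\Phi^m_t(x)}$. Inserting zero durations into $t$, exactly as in the reduction preceding Lemma \ref{irreducibility}, one sees that this remains true with $\Phi^m_t$ replaced by a single shear $\varphi^{(1)}_s$ or $\varphi^{(2)}_s$ for an arbitrary $s\ge 0$ (the derivative of a composition of horizontal shears at a fixed $p$ is again a horizontal shear by the sum of the parameters).

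Now fix a point $x_0=(q_0,p_0)\in\mathcal{O}$ at which $f_1(p_0),f_1'(p_0),f_2(q_0),f_2'(q_0)$ are all nonzero; such $x_0$ exists because $f_1,f_2$ are non‑constant analytic, so the zero sets of $f_1,f_2,f_1',f_2'$ are finite while $\mathcal{O}$ is cofinite in $\mathbb{T}^2$. Taking $s=2\pi/f_1(p_0)$ we have $\varphi^{(1)}_s(x_0)=x_0$ in $\mathbb{T}^2$ and
\[
D_{x_0}\varphi^{(1)}_s=\begin{pmatrix}1 & s f_1'(p_0)\\ 0 & 1\end{pmatrix},\qquad s f_1'(p_0)=\frac{2\pi f_1'(p_0)}{f_1(p_0)}\neq 0 .
\]
Thus $\Sigma_{x_0}$ is a nonempty finite subset of $\mathbb{RP}^1$ invariant under a nontrivial unipotent transformation. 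Such a transformation has the horizontal direction $[(1,0)]$ as its only fixed point, and the forward orbit of any other direction is infinite; hence its only finite invariant set is $\{[(1,0)]\}$, so $\Sigma_{x_0}=\{[(1,0)]\}$. Running the identical argument with $\varphi^{(2)}_{2\pi/f_2(q_0)}$, whose differential at $x_0$ is the lower unipotent matrix with lower‑left entry $2\pi f_2'(q_0)/f_2(q_0)\neq 0$, fixing only the vertical direction $[(0,1)]$, gives $\Sigma_{x_0}=\{[(0,1)]\}$. Since $[(1,0)]\neq[(0,1)]$ this is a contradiction, so Theorem \ref{degeneracy}(b) cannot hold.

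The step that genuinely needs care — and where $(H1)$, and implicitly $(H2)$, are consumed — is the passage from Bougerol's dichotomy to the \emph{deterministic, pointwise} invariance statement of Theorem \ref{degeneracy}(b): that $\{E^i_x\}$ is truly invariant under every deterministic composition $\Phi^m_t$ at every $x\in\mathrm{Supp}(\mu)$. This is precisely what the continuous version of $x\mapsto\nu_x$ from Lemma \ref{RCPD} (built on the uniform geometric ergodicity of $P$, hence on $(H1)$) delivers; once it is in hand, the shearing argument above is elementary and uses only the non‑constancy of $f_1,f_2$, in complete parallel with Lemma \ref{ruling out a}. The permutation $\sigma$ and the non‑contractibility of the circles $\{(q,p_0):q\in\mathbb{S}^1\}$ cause no trouble, because a nontrivial translation of $\mathbb{RP}^1$ admits no finite orbit other than its fixed point, so any finite $\sigma$‑invariant fibre is forced onto that fixed point regardless. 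Alternatively, one may phrase the obstruction through the projective chain: in case (b) the set $\Sigma=\bigcup_{x}\{x\}\times\Sigma_x$ would be a closed $\check{P}$‑invariant subset of $P\mathcal{O}$ with empty interior, which is incompatible with a power of $\check{P}$ being strong Feller near the Hörmander point supplied by $(H2)$ (Theorem \ref{Strong Feller}), together with the open small set of Lemma \ref{small set} and the exact controllability of the base chain (Lemma \ref{irreducibility}); the explicit computation above, however, exhibits the contradiction most transparently. Combined with Lemma \ref{ruling out a}, this rules out $2\lambda_1=\lambda_\Sigma$, and since $\lambda_1\ge 0=\lambda_\Sigma/2$ always holds, it forces $\lambda_1>0$.
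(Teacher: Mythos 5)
Your argument is correct, but it is a genuinely different proof from the one in the paper. The paper's proof is soft: by $(H2)$ and Theorem \ref{Strong Feller}, some power $\hat P^{m_0}$ of the lifted chain is strong Feller near the H\"ormander point $\hat x$, while the indicator of $E_x=\bigcup_i E^i_x$ would be a bounded, measurable function fixed by $\hat P^{m_0}$ and yet discontinuous, which is impossible. Your proof is elementary and explicit: returning to a point $x_0$ with $f_1(p_0),f_1'(p_0),f_2(q_0),f_2'(q_0)$ all nonzero via the pure horizontal shear $\varphi^{(1)}_s$ with $s=2\pi/|f_1(p_0)|$ (take the absolute value so that $s\ge 0$, as the zero-insertion trick requires), the differential $D_{x_0}\varphi^{(1)}_s$ is a nontrivial upper unipotent whose only finite invariant subset of the projective line is its fixed point $[1:0]$; hence $\Sigma_{x_0}=\{[1:0]\}$, and the vertical shear likewise forces $\Sigma_{x_0}=\{[0:1]\}$, a contradiction. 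Note that your computation does not in fact use $(H2)$ at all, contrary to your parenthetical remark that it is ``implicitly consumed'': Theorem \ref{degeneracy} assumes only $(H1)$ and $2\lambda_1=\lambda_\Sigma$, and the remainder of your argument needs only that $f_1,f_2$ are nonconstant and analytic, so your route is strictly more economical than the paper's. One naming slip: the dichotomy you quote is Baxendale's (\cite{B}, Theorem 6.8), not Bougerol's. Your parenthetical alternative via the projective chain and strong Feller is essentially the paper's own argument, transported from the tangent bundle to the projective bundle.
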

\begin{proof}
	It follows from the analyticity of the vector fields that the lifted process 
	\[
	\hat{\Phi}_{\underline{\tau}}^m(x,v)=(\Phi_{\underline{\tau}}^m(x),D_x\Phi_{\underline{\tau}}^m(x)v)
	\]
	is also analytic. Therefore, by Theorem \ref{Strong Feller}, there exists an $m_0$ and a neighborhood $\hat{U}$ of $\hat{x}$ such that $\hat{P}^{m_0}|_{\hat{U}}$ is strong Feller. Assume Theorem \ref{degeneracy} $(b)$ holds, there exist proper linear subspaces $E_x^1,\cdots,E_x^p$ of $T_x \mathcal{O}$ for all $x\in \text{Supp}(\mu)$ such that 
	\begin{equation}\label{permutation}
		D_x\Phi_t^m(E_x^i)=E_{\Phi_t^m(x)}^{\sigma(i)}, \ \ 1\leq i\leq p
	\end{equation}
	for all $m$, $t\in[0,T]^{2m}$ and some permutation $\sigma$. Without loss of generality assume the projection $\pi(\hat{U})$ is contained in $\text{Supp}(\mu)$, by shrinking $\hat{U}$ if necessary. For every $x\in\text{Supp}(\mu)$, define $f:\hat{U}\to \mathbb{R}$ by
	\[f(x,v):=\chi_{E_x}(v)=
	\begin{cases}
		1 & \text{if} \ \ v\in E_x,\\
		0 & \text{otherwise},
	\end{cases}
	\]
	where $E_x=\bigcup_{i=1}^p E_x^i$. Then $f$ is well-defined and is bounded, measurable but discontinuous since $E_x^i$ are proper subspaces. By (\ref{permutation}),
	\begin{align*}
		\hat{P}^{m_0}f(x,v)&=\mathbb{E}\big(f(\Phi_{\underline{\tau}}^{m_0}(x),D_x \Phi_{\underline{\tau}}^{m_0}(x)v)\big)\\
		&=\mathbb{E}\big(\chi_{E_{\Phi_{\underline{\tau}}^{m_0}(x)}}(D_x \Phi_{\underline{\tau}}^{m_0}(x)v)\big)\\
		&=\mathbb{E}\big(\chi_{D_x\Phi_{\underline{\tau}}^{m_0} (E_x)}(D_x \Phi_{\underline{\tau}}^{m_0}(x)v)\big)\\
		&=f(x,v),
	\end{align*}
	is discontinuous which is a contradiction with $\hat{P}^{m_0}f(x,v)$ is continuous since $\hat{P}^{m_0}|_{\hat{U}}$ is strong Feller.
\end{proof}
\begin{proof}[\textbf{Proof of Theorem \ref{Positivity of the top Lyapunov exponent}}]
	Note that $\text{supp}(\mu)=\mathcal{O}$ since $\mu$ is the Lebesgue measure on $\mathcal{O}$. Thus, given assumptions $(H1)$ and $(H2)$, it follows from Lemma \ref{ruling out a} and Lemma \ref{ruling out b} that Theorem \ref{degeneracy} $(a)$ and $(b)$ do not hold. Therefore, $2\lambda_1\neq \lambda_\Sigma=0$, leading to the conclusion that $\lambda_1>0$.
\end{proof}
	
\section{Proof of Theorem \ref{Exponential mixing}}
    \quad In this section, we aim to prove Theorem \ref{Exponential mixing}, which establishes exponential mixing for Hamiltonian shear flow. The proof of this theorem requires proving uniform geometric ergodicity for the two-point chain. To establish uniform geometric ergodicity for the two-point chain, we will use Theorem \ref{Positivity of the top Lyapunov exponent} and the uniform geometric ergodicity for the projective chain. Therefore, we first present the result on uniform geometric ergodicity for the projective chain in the following subsection.
	\subsection{Uniform geometric ergodicity for projective chain}
	\quad We will now demonstrate the irreducibility of the projective Markov chain.
	\begin{lem}\label{irreducibility for projective chain}
		Let $\{\check{\Phi}_{\underline{\tau}}^m\}$ be the projective chain as in Section 2.1. Then, $\{\check{\Phi}_{\underline{\tau}}^m\}$ is exactly controllable.
	\end{lem}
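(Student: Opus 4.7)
The plan is to upgrade the exact controllability of the one-point chain (Lemma \ref{irreducibility}) to the projective bundle by adding an explicit direction-adjustment step. The key observation is that at a pivot base point $\hat{x}_0 = (q_0, p_0) \in \mathcal{O}$ with $f_1(p_0) = 0$ (so $f_1'(p_0) \neq 0$ by (H1)) and $f_2(q_0), f_2'(q_0)$ both nonzero -- such points exist since $f_1, f_2$ are non-constant and analytic -- the map $\varphi_\tau^{(1)}$ is a \emph{pure twist}: it fixes $\hat{x}_0$ and acts on $T_{\hat{x}_0}\mathcal{O}$ by the shear $\begin{pmatrix} 1 & \tau f_1'(p_0) \\ 0 & 1 \end{pmatrix}$. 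As $\tau$ varies, these shears form a one-parameter family whose projective action is transitive on $P_{\hat{x}_0}\mathcal{O}$ minus the horizontal direction.

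Having fixed a pivot pair $(\hat{x}_0, \hat{u}_0) \in P\mathcal{O}$ with $\hat{u}_0$ non-horizontal, it suffices to show (i) $(\hat{x}_0, \hat{u}_0)$ is reachable from every $(x, u)$, and (ii) every $(y, w)$ is reachable from $(\hat{x}_0, \hat{u}_0)$; concatenating the two yields exact controllability. For (i), Lemma \ref{irreducibility} produces $\underline{\tau}^{(1)}$ with $\Phi_{\underline{\tau}^{(1)}}^{m_1}(x) = \hat{x}_0$, bringing us to $(\hat{x}_0, u^{(1)})$ for some image direction $u^{(1)}$. If $u^{(1)}$ is non-horizontal, a single pure twist $\varphi_\tau^{(1)}$ sends it to $\hat{u}_0$. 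If $u^{(1)}$ happens to be horizontal, I insert a short excursion $\varphi_{\tau'}^{(2)}$, whose derivative $\begin{pmatrix} 1 & 0 \\ \tau' f_2'(q_0) & 1 \end{pmatrix}$ carries horizontal to non-horizontal since $f_2'(q_0) \neq 0$, then return the base point to $\hat{x}_0$ via a further application of Lemma \ref{irreducibility}, and finish with a pure twist. For (ii), a mirror argument: pre-adjust the direction at $\hat{x}_0$ by twists, then transport the base point to $y$ via Lemma \ref{irreducibility}; since the two shear types generate a subgroup of $PSL_2(\mathbb{R})$ acting transitively on $\mathbb{RP}^1$, the pre-adjustment can be chosen so that the arrival direction under the tangent cocycle is exactly $w$.

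The main technical obstacle is the exceptional-direction issue: the one-parameter twist group $\{\varphi_\tau^{(1)}\}$ at $\hat{x}_0$ fixes the horizontal projective point, so by itself it cannot move horizontal to non-horizontal. The remedy above -- interleaving a transverse $\varphi^{(2)}$-excursion -- works because its derivative is a shear in the complementary direction; the two one-parameter shear families together generate a subgroup of $SL_2(\mathbb{R})$ whose projective action on $\mathbb{RP}^1$ is transitive, which is the precise condition needed to realize every target direction at $\hat{x}_0$. Care must be taken to keep each $\tau_i$ in $[0, T]$, but, as in the remark preceding Lemma \ref{irreducibility}, splitting large $\tau$'s into several smaller steps does not shrink the reachable set.
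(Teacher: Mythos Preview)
Your overall strategy---reduce to the one-point chain and add a direction-adjustment step at a pivot where one shear fixes the base---is reasonable, but there is a genuine gap in the direction-adjustment step.

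At your pivot $\hat{x}_0=(q_0,p_0)$ with $f_1(p_0)=0$, the derivative of $\varphi_\tau^{(1)}$ acts on the slope $s=u_1/u_2$ by $s\mapsto s+\tau f_1'(p_0)$. Since exact controllability requires $\tau\in[0,\infty)$, this reaches only the half-line $\{s+\tau f_1'(p_0):\tau\ge 0\}$, not all of $\mathbb{R}$. So the twist family is \emph{not} transitive on $P_{\hat{x}_0}\mathcal{O}\setminus\{\text{horizontal}\}$; it covers only one arc. Consequently your claim ``a single pure twist $\varphi_\tau^{(1)}$ sends $u^{(1)}$ to $\hat u_0$'' fails whenever $\hat u_0$ lies on the wrong side of $u^{(1)}$, and likewise in part~(ii) the required pre-image direction $v=A^{-1}w$ need not be reachable from $\hat u_0$ by nonnegative twists.

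Your proposed remedy---invoke that the two shear types generate a subgroup of $PSL_2(\mathbb{R})$ transitive on $\mathbb{RP}^1$---does not apply directly: $\varphi^{(2)}_\tau$ does \emph{not} fix $\hat{x}_0$ (you chose $f_2(q_0)\neq 0$ precisely to stay in $\mathcal{O}$), so you cannot compose the two tangent shears at a common base point. The ``excursion and return'' device has the same defect: the return path from Lemma~\ref{irreducibility} has an uncontrolled cocycle, which may well land you back on the horizontal direction or on the wrong arc. Note that you cannot cure this by taking $\hat{x}_0$ with $f_1(p_0)=f_2(q_0)=0$, since such points lie in $F=\mathbb{T}^2\setminus\mathcal{O}$.

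The paper's proof avoids this by never relying on a fixed pivot. Instead it builds an explicit chain of reachable sets and, at the crucial direction-fixing step, exploits the freedom to choose an intermediate height $p_*$: since $f_1'/f_1$ takes both signs near a minimum of $f_1$, one can arrange $\tau_3 f_1'(p_*)$ to equal any prescribed value (thus killing the $u_1$-component) while simultaneously controlling the $q$-coordinate via $\tau_3 f_1(p_*)$. That coupled choice of $(p_*,\tau_3)$ is exactly what substitutes for the ``negative $\tau$'' you are missing. Your approach could be repaired along similar lines---e.g.\ by routing through a second pivot $\hat{x}_0'$ where $f_1'$ has the opposite sign and carefully tracking the transport cocycle---but that is nontrivial additional work, not a detail.
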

	\begin{proof}
		Consider $q_0$ such that $|f_2(q_0)|$ attains its maximum value and then $f_2'(q_0) = 0$. Similarly, let $p_0$ be such that $|f_1(p_0)|$ attains its maximum value with $f_1'(p_0) = 0$. Furthermore, select $q_1$ such that $f_2(q_1) \neq 0$ and $f_2'(q_1) \neq 0$. Likewise, choose $p_1$ such that $f_1(p_1) \neq 0$ and $f_1'(p_1) \neq 0$.
		Define the sets as follows: $A=\{(q, p, u, v)\in P\mathcal{O}: q=q_1\}$, $B=\{(q, p, u_1, u_2)\in P\mathcal{O}: q=q_0, u_2\neq 0\}$, $C=\{(q_1, p_1, 0, 1), (q_1, p_1, 0, -1)\}$, $D=\{(q_1, p_1, 0, 1)\}$. We are now prepared to demonstrate the following:
		\begin{itemize}
			\item[(1)] $A$ is reachable from $P\mathcal{O}$.\\
			Fix any $(q, p, u_1, u_2)\in P\mathcal{O}$. Since either $f_1(p)\neq 0$ or $f_2(q)\neq 0$, we see that, by taking $\tau_1=0$ and almost any $\tau_2>0$, we may assume without loss of generality that $f_1(p)\neq 0$. Then, by choosing $k\in\mathbb{Z}$ such that $\tau_1=\frac{q_1-q+2k\pi}{f_1(p)}>0$ and choosing $\tau_2=0$, to obtain the conclusion.
			\item[(2)] $B$ is reachable from $A$.\\
			Fix any $(q_1, p, u_1, u_2)\in A$. If $u_2=0$, then by setting $\tau_1=0$, selecting $k_2\in\mathbb{Z}$ such that $\tau_2=\frac{p_1-p+2k_2\pi}{f_2(q_1)}>0$, choosing $k_3\in\mathbb{Z}$ such that $\tau_3=\frac{q_0-q_1+2k_3\pi}{f_1(p_1)}>0$, and setting $\tau_4=0$, we obtain the conclusion. If $u_2\neq 0$, then by selecting $k_2'\in\mathbb{Z}$ such that $\tau_2=\frac{p_1-p+2k_2'\pi}{f_2(q_1)}>0$ and ensuring $f_1(p+\tau_2f_2(q_1))(u_2+\tau_2f_2'(q_1)u_1)\neq 0$. Then, by setting $\tau_1=0$, choosing $k_3'\in\mathbb{Z}$ such that $\tau_3=\frac{q_0-q_1+2k_3'\pi}{f_1(p+\tau_2f_2(q_1))}>0$, and setting $\tau_4=0$, we obtain the conclusion.
			\item[(3)] $C$ is reachable from $B$.\\
			Fix any $(q_0, p, u_1, u_2)\in B$. Choose $\tau_3>0$ and $p_*\in [0,2\pi)$ such that $\tau_3 f_1'(p_*)=-\frac{u_1}{u_2}$ and $\tau_3 f_1(p_*)=q_1-q_0-2k\pi$ for sufficiently large $k>0$. In fact, without loss of generality, we assume there exists $p'$ such that $f_1(p')<0$. Then $\min_{p}f_1(p)<0$, and let $f_1(p_0')=\min_{p}f_1(p)$, then $f_1'(p_0')=0$. For sufficiently small $\varepsilon>0$, we have $f_1'(p_0'-\varepsilon)<0$, $f_1'(p_0'+\varepsilon)>0$, and $f_1(p)<0$ for any $p\in [p_0'-\varepsilon, p_0'+\varepsilon]$. Thus, $\frac{f_1'(p)}{f_1(p)}$ is continuous for any $p\in [p_0'-\varepsilon, p_0'+\varepsilon]$. It follows that 
			\begin{equation*}
				\left[\frac{f_1'(p_0'+\varepsilon)}{f_1(p_0'+\varepsilon)}, \frac{f_1'(p_0'-\varepsilon)}{f_1(p_0'-\varepsilon)}\right]\subseteq \left[\min_{p\in [p_0'-\varepsilon, p_0'+\varepsilon]}\frac{f_1'(p)}{f_1(p)}, \max_{p\in [p_0'-\varepsilon, p_0'+\varepsilon]}\frac{f_1'(p)}{f_1(p)}\right].
			\end{equation*}
			Choose $k>0$ sufficiently large such that
			\begin{equation*}
				-\frac{u_1}{u_2(q_1-q_0-2k\pi)}\in \left[\frac{f_1'(p_0'+\varepsilon)}{f_1(p_0'+\varepsilon)}, \frac{f_1'(p_0'-\varepsilon)}{f_1(p_0'-\varepsilon)}\right].
			\end{equation*}
			By the intermediate value theorem, there exists $p_*\in [p_0'-\varepsilon, p_0'+\varepsilon]$ such that 
			\begin{equation*}
				\frac{f_1'(p_*)}{f_1(p_*)}=-\frac{u_1}{u_2(q_1-q_0-2k\pi)}.
			\end{equation*}
			Set $\tau_3=\frac{q_1-q_0-2k\pi}{f_1(p_*)}$ to obtain $\tau_3 f_1'(p_*)=-\frac{u_1}{u_2}$.
			
			Next, set $\tau_1 = 0$. Choose $k_2 \in \mathbb{Z}$ such that $\tau_2 = \frac{p_* - p + 2k_2\pi}{f_2(q_0)} > 0$, and choose $k_4 \in \mathbb{Z}$ such that $\tau_4 = \frac{p_1 - p_* + 2k_4\pi}{f_2(q_1)} > 0$. Thus, the result is established.
			\item[(4)] $D$ is reachable from $C$.\\
			Fix $(q_1, p_1, 0, -1)\in C$. Choose $q_*$ such that $f_1'(p_1)f_2'(q_*)<0$. Choose $k_1\in\mathbb{Z}$ such that $\tau_1=\frac{q_*-q_1+2k_1\pi}{f_1(p_1)}>0$. Clearly, there exists $M>0$ such that 
			\begin{equation*}
				\frac{(q_*-q_1+2k_1\pi)f_1'(p_1)}{f_1(p_1)}\in[-M, M].
			\end{equation*}
			As in case $(3)$, for sufficiently small $\varepsilon>0$, $f_1(p)<0$ for any $p\in [p_0'-\varepsilon,p_0'+\varepsilon]$, where $p_0'$ is as defined in case $(3)$ and there exists $\delta>0$ such that 
			\begin{equation*}
				[-\delta,\delta]\subseteq \left[\min_{p\in [p_0'-\varepsilon, p_0'+\varepsilon]}\frac{f_1'(p)}{f_1(p)}, \max_{p\in [p_0'-\varepsilon, p_0'+\varepsilon]}\frac{f_1'(p)}{f_1(p)}\right].
			\end{equation*}
			Choose $k_3\in\mathbb{Z}$ such that $\frac{q_1-q_*+2k_3\pi}{f_1(p)}>0$ for any $p\in [p_0'-\varepsilon, p_0'+\varepsilon]$. Choose $k_2\in\mathbb{Z}$ such that $\frac{p-p_1+2k_2\pi}{f_2(q_*)}>0$ sufficiently large for any $p\in [p_0'-\varepsilon, p_0'+\varepsilon]$, and 
			\begin{equation*}
				\left|\left(1+\frac{(q_*-q_1+2k_1\pi)(p-p_1+2k_2\pi)f_1'(p_1)f_2'(q_*)}{f_1(p_1)f_2(q_*)}\right)(q_1-q_*+2k_3\pi)\right|\geq \frac{M}{\delta},
			\end{equation*}  
			for any $p\in [p_0'-\varepsilon, p_0'+\varepsilon]$. Then by the intermediate value theorem, there exists $\tilde{p}\in [p_0'-\varepsilon, p_0'+\varepsilon]$ such that
			\begin{align*}
				&\frac{(q_*-q_1+2k_1\pi)f_1'(p_1)}{f_1(p_1)}\\
				=&\frac{-(q_1-q_*+2k_3\pi)f_1'(\tilde{p})}{f_1(\tilde{p})}\left(1+\frac{(q_*-q_1+2k_1\pi)(\tilde{p}-p_1+2k_2\pi)f_1'(p_1)f_2'(q_*)}{f_1(p_1)f_2(q_*)}\right).
			\end{align*}
			Then set $\tau_2=\frac{\tilde{p}-p_1+2k_2\pi}{f_2(q_*)}$ and $\tau_3=\frac{q_1-q_*+2k_3\pi}{f_1(\tilde{p})}$, choose $k_4\in\mathbb{Z}$ such that $\tau_4=\frac{p_1-\tilde{p}+2k_4\pi}{f_2(q_1)}>0$. This finalizes the argument.
		\end{itemize}
		We conclude by noting that any point $(q,p,u_1,u_2)$ is reachable from a point $(\hat{q},\hat{p}, \hat{u}_1, \hat{u}_2)$ by first traveling to $D$ and then traveling to $(q,p,u_1,u_2)$, using the same arguments as above.
	\end{proof}
	\begin{thm}[\textbf{Uniform geometric ergodicity for projective chain}]\label{Uniformly geometrically ergodic for projective chain}
		Let $\{\check{\Phi}_{\underline{\tau}}^m\}$ be as above. Assume $(H1)$ holds. Then there exists a continuous function $\check{V}:P\mathcal{O}\to [1,\infty)$ such that the transition kernel $\check{P}$ of $\{\check{\Phi}_{\underline{\tau}}^m\}$ is $\check{V}$-uniformly geometrically ergodic.
	\end{thm}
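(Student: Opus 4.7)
The plan is to apply Theorem \ref{conditions for uniformly geometrically ergodic} with $M=P\mathcal{O}$, $Q=\check{P}$, $\mathcal{F}=\check{\mathcal{X}}$, mirroring the proof of Theorem \ref{Uniformly geometrically ergodic}. I need to verify the Feller property together with conditions (a)–(d). The Feller property of $\check{P}$ is immediate from the smooth dependence of $\check{\Phi}_{\underline{\tau}}$ on $\underline{\tau}$ and the continuity of the density of $\tau_i$. Condition (b) (topological irreducibility) follows from exact controllability, which is precisely Lemma \ref{irreducibility for projective chain}. Condition (c) (strong aperiodicity) follows from $\check{\Phi}_{\underline{0}}(\check{x})=\check{x}$, continuity of the flow in $\underline{\tau}$, and $\mathbb{P}(\tau<\varepsilon)>0$ for every $\varepsilon>0$.

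For condition (a) (open small set), I will invoke Lemma \ref{small set}, which reduces the question to verifying $\text{Lie}_{\check{x}}(\check{\mathcal{X}})=T_{\check{x}}P\mathcal{O}$ at some $\check{x}\in P\mathcal{O}$. Introducing the angle coordinate $\theta$ on the fibers (so $u=(\cos\theta,\sin\theta)$), the projective fields take the form
\[
\check{X}_1(q,p,\theta)=(f_1(p),\,0,\,-f_1'(p)\sin^2\theta),\qquad \check{X}_2(q,p,\theta)=(0,\,f_2(q),\,f_2'(q)\cos^2\theta),
\]
and a direct computation yields $[\check{X}_1,\check{X}_2]$. Since $f_1,f_2$ are non-constant analytic and (H1) forces $f_i$ and $f_i'$ to have no common zeros, the set of $(q,p)$ at which $f_1(p),f_2(q),f_1'(p),f_2'(q)$ are simultaneously nonzero is open and dense in $\mathcal{O}$; at such a point, with $\theta$ chosen generically (e.g.\ $\theta=\pi/4$), the $3\times 3$ matrix $(\check{X}_1,\check{X}_2,[\check{X}_1,\check{X}_2])$ can be shown to be nonsingular by expanding along the first column and using the linear independence of the $\sin^2\theta$, $\cos^2\theta$, $\sin 2\theta$ contributions in the fiber component.

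For condition (d), I will build the Lyapunov function on $P\mathcal{O}$ by pulling back from the base: set $\check{V}(x,u):=V(x)$, where $V$ is the continuous Lyapunov function produced by Proposition \ref{Lyapunov-Foster drift condition} for $P$. Because the bundle projection $\pi:P\mathcal{O}\to\mathcal{O}$ intertwines $\check{\Phi}_{\underline{\tau}}$ with $\Phi_{\underline{\tau}}$, we have $\check{P}\check{V}(x,u)=PV(x)\leq\alpha V(x)+b\chi_C(x)=\alpha\check{V}(x,u)+b\chi_{\pi^{-1}(C)}(x,u)$, and $\pi^{-1}(C)\subset P\mathcal{O}$ is compact because the fibers of $\pi$ are circles. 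Combining (a)–(d) with Theorem \ref{conditions for uniformly geometrically ergodic} gives $\check{V}$-uniform geometric ergodicity of $\check{P}$.

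The main obstacle I expect is the Lie-bracket verification in condition (a): the projective fields carry the nonlinear correction $-\langle DX_i u,u\rangle u$ in the fiber direction, so one must organize the $\theta$-dependence carefully to conclude that the $3\times 3$ determinant does not vanish identically under (H1). Every other step transfers essentially verbatim from the one-point chain analysis of Section 2.
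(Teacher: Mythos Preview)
Your proposal is correct and follows essentially the same route as the paper: apply Theorem~\ref{conditions for uniformly geometrically ergodic} to the projective chain, pull back the one-point Lyapunov function via $\check{V}(x,u)=V(x)$ (using compactness of the fibre to get a compact $\pi^{-1}(C)$), invoke Lemma~\ref{irreducibility for projective chain} for irreducibility, and verify the Lie-bracket condition to obtain a small set via Lemma~\ref{small set}. The only cosmetic difference is in the Lie-bracket check: the paper works in $(u_1,u_2)$ coordinates and evaluates a $3\times3$ minor at $u=(0,1)$ to obtain the nonvanishing factor $f_2^2(q)\big[(f_1'(p))^2-f_1(p)f_1''(p)\big]$, whereas you use the angle coordinate $\theta$ and the linear independence of $\sin^2\theta,\cos^2\theta,\sin2\theta$ to see that the $\sin2\theta$ coefficient $f_1(p)f_2(q)f_1'(p)f_2'(q)$ is nonzero at a generic base point---either computation suffices and neither actually needs $(H1)$, only that the $f_i$ are non-constant analytic.
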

	\begin{proof}
		The Lie brackets of $\check{X_1}$ and $\check{X_2}$ are given by 
		\begin{align*}
			[\check{X_1},\check{X_2}](\check{x})&=D\check{X_2}(\check{x})\check{X_1}(\check{x})-D\check{X_1}(\check{x})\check{X_2}(\check{x})\\
			&=\begin{pmatrix}
				-f_1'(p)f_2(q)\\
				f_1(p)f_2'(q)\\
				-u_1^2u_2f_1(p)f_2''(q)-2u_1u_2^2f_1'(p)f_2'(q)-u_2^3f_2(q)f_1''(p)\\
				u_1^3f_1(p)f_2''(q)+2u_1^2u_2f_1'(p)f_2'(q)+u_1u_2^2f_2(q)f_1''(p)
			\end{pmatrix}
		\end{align*}
		and the following matrix
		\begin{align*}
			&(\check{X_1}(\check{x}),\check{X_2}(\check{x}), [\check{X_1},\check{X_2}](\check{x}))\\
			&=\begin{pmatrix}
				f_1(p) & 0 & -f_1'(p)f_2(q)\\
				0 & f_2(q) & f_1(p)f_2'(q)\\
				u_2^3f_1'(p) & -u_1^2u_2f_2'(q) & -u_1^2u_2f_1(p)f_2''(q)-2u_1u_2^2f_1'(p)f_2'(q)-u_2^3f_2(q)f_1''(p)\\
				-u_1u_2^2f_1'(p) & u_1^3f_2'(q) & u_1^3f_1(p)f_2''(q)+2u_1^2u_2f_1'(p)f_2'(q)+u_1u_2^2f_2(q)f_1''(p)
			\end{pmatrix}
		\end{align*}
		has rank 3 for almost all $\check{x}=(q,p,u_1,u_2)\in P\mathcal{O}$. In fact, let
		\begin{equation*}
			A(\check{x}):=\begin{pmatrix}
				f_1(p) & 0 & -f_1'(p)f_2(q)\\
				0 & f_2(q) & f_1(p)f_2'(q)\\
				u_2^3f_1'(p) & -u_1^2u_2f_2'(q) & -u_1^2u_2f_1(p)f_2''(q)-2u_1u_2^2f_1'(p)f_2'(q)-u_2^3f_2(q)f_1''(p)
			\end{pmatrix}.
		\end{equation*}
		$\det A(\hat{x})$ is a real analytic function and 
		\begin{equation*}
			\det A(q,p,0, 1)=u_2^3f_2^2(q)\left([f_1'(p)]^2-f_1(p)f_1''(p)\right).
		\end{equation*}
	    Since $f_i \in C^{\omega}(\mathbb{S}^1, \mathbb{R})$, $i=1,2$ are non-constant, there exists $q, p$ such that $f_2(q)\neq 0$, $[f_1'(p)]^2-f_1(p)f_1''(p)\neq 0$.
	
		Denote $\check{V}: P\mathcal{O}\to [1,\infty)$ by
		\begin{equation*}
			\check{V}(q,p,u)=V(q,p).
		\end{equation*}
		Then $\check{V}$ satisfies the Lyapunov-Foster drift condition for $\{\check{\Phi}_{\underline{\tau}}^m\}$. Strong aperiodicity follows immediately from $\mathbb{P}(\tau<\varepsilon)>0$ for any $\varepsilon>0$ and $\check{\Phi}_0(x)=x$. By combining Lemma \ref{small set}, and Lemma \ref{irreducibility for projective chain}, we establish that the conditions of Theorem \ref{conditions for uniformly geometrically ergodic} are satisfied. Consequently, the transition kernel $\check{P}$ of $\{\check{\Phi}_{\underline{\tau}}^m\}$ is $\check{V}$-uniformly geometrically ergodic.
	\end{proof}

    \subsection{Uniform geometric ergodicity for two-point chain}
    \quad Our goal in this subsection is to prove uniform geometric ergodicity for the two-point chain. We start by calculating the invariant set of $\tilde{\Phi}_{\underline{\tau}}$ and introducing some notation. Denote
    \[
    \mathcal{I}_1:=\{(q, p, q', p') \in \mathcal{O} \times \mathcal{O}: f_1(p' + t) \equiv f_1(p + t),\,  f_2(q' + t) \equiv f_2(q + t)\, \text{for all } t \in \mathbb{R} \},
    \]
    \[
    \mathcal{I}_2:=\{(q, p, q', p') \in \mathcal{O} \times \mathcal{O}: f_1(p' - t) \equiv f_1(p + t),\,  f_2(q' + t) \equiv -f_2(q + t)\, \text{for all } t \in \mathbb{R} \},
    \]     
    \[
    \mathcal{I}_3:=\{(q, p, q', p') \in \mathcal{O} \times \mathcal{O}: f_1(p' + t) \equiv -f_1(p + t),\,  f_2(q' - t) \equiv f_2(q + t)\, \text{for all } t \in \mathbb{R} \},
    \]   
    and
    \[
    \mathcal{I}_4:=\{(q, p, q', p') \in \mathcal{O} \times \mathcal{O}: f_1(p' - t) \equiv -f_1(p + t),\,  f_2(q' - t) \equiv -f_2(q + t)\, \text{for all } t \in \mathbb{R} \}.
    \]  
    By direct calculation, $\Delta=\cup_{k=1}^4 \mathcal{I}_k$ is shown to be the invariant set under $\tilde{\Phi}_{\underline{\tau}}$. We first prove the irreducibility of the two-point Markov chain.
    \begin{lem}\label{topologically irreducible for two-point chain}
    	Given $(x_*,x_*')\in \mathcal{O}^{(2)}$ and $\varepsilon>0$, $B_\varepsilon(x_*,x_*')$ is reachable from $\mathcal{O}^{(2)}$. In particular, $\{\tilde{\Phi}_{\underline{\tau}}^m\}$ is topologically irreducible.
    \end{lem}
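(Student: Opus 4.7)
The plan is to mimic the explicit control construction used for the one-point chain in Lemma 2.5 and for the projective chain in Lemma 4.1, but now tracking both trajectories and ensuring that the resulting configurations stay outside the invariant set $\Delta=\bigcup_{k=1}^{4}\mathcal{I}_{k}$. I would proceed in three stages: (i) fix a convenient reference pair $(y_{0},y_{0}')\in\mathcal{O}^{(2)}$, (ii) show that from $(y_{0},y_{0}')$ one can drive the chain into any prescribed ball $B_{\varepsilon}(x_{*},x_{*}')$, and (iii) show that any initial pair $(x,x')\in\mathcal{O}^{(2)}$ can be brought into a neighborhood of $(y_{0},y_{0}')$.

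For stage (i) I would pick $(y_{0},y_{0}')$ so that $f_{1}, f_{1}', f_{2}, f_{2}'$ are all nonzero at the relevant coordinates and so that $f_{1}(p_{0})\neq f_{1}(p_{0}')$ and $f_{2}(q_{0})\neq f_{2}(q_{0}')$. The significance of these inequalities is that the horizontal shear $\varphi_{\tau}^{(1)}$ moves the two points by different amounts in the $q$-direction exactly when $f_{1}(p)\neq f_{1}(p')$, producing one independent degree of freedom in the relative $q$-coordinate per step, and analogously for $\varphi_{\tau}^{(2)}$. This is the mechanism that decouples the motion of the pair. In stage (ii), alternating four shears with parameters $\tau_{1},\tau_{2},\tau_{3},\tau_{4}$, I would solve for these parameters by the intermediate value theorem, in the spirit of case $(4)$ in the proof of Lemma 4.1, expressing the four target coordinates as continuous (indeed analytic) functions of $(\tau_{1},\ldots,\tau_{4})$ whose Jacobian is nonvanishing at a base value thanks to the nondegeneracy conditions above; a continuity/submersion argument then gives that a whole open ball around $(x_{*},x_{*}')$ is in the image.

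For stage (iii) I would apply Lemma 2.5 to $x$ to obtain a sequence $\underline{\tau}$ with $\Phi_{\underline{\tau}}(x)=y_{0}$. This same sequence sends $x'$ to some image $\tilde{x}'$, and by perturbing the $\tau_{i}$'s slightly within the degrees of freedom allowed (and inserting additional preparatory shears of small length if needed to guarantee that the $f_{i}$-values at the two current points differ), we can push $\tilde{x}'$ into an arbitrarily small neighborhood of $y_{0}'$. The key is that the map sending $\underline{\tau}$ to $\tilde{\Phi}_{\underline{\tau}}(x,x')$ is analytic, so the only way it could be constrained to lie on a proper analytic set is if the pair is in $\Delta$ to begin with, which we have excluded.

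The main obstacle is precisely the invariance and structure of $\Delta$: since the shears act diagonally, we do not have literal independent control of the two points, and we must be careful that our intermediate configurations never drift into $\mathcal{I}_{1}\cup\cdots\cup\mathcal{I}_{4}$ where this independence fails completely. Because $\Delta$ is defined by strong real-analytic identities in $f_{1}, f_{2}$ and their translates, it is a closed nowhere-dense subset of $\mathcal{O}\times\mathcal{O}$, so generic parameter choices avoid it; the delicate part is verifying case-by-case, as in Lemma 4.1, that the IVT-based construction in stage (ii) can be carried out without crossing into $\Delta$ and that the openness/submersion argument really yields an open neighborhood of the target rather than merely a lower-dimensional image.
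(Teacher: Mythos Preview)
Your proposal has a genuine gap in stage (ii). A submersion argument for the map $(\tau_{1},\tau_{2},\tau_{3},\tau_{4})\mapsto\tilde{\Phi}_{\underline{\tau}}^{2}(y_{0},y_{0}')$ only yields \emph{local} surjectivity: it shows the image contains an open ball around $\tilde{\Phi}_{\underline{\tau}_{0}}^{2}(y_{0},y_{0}')$ for the base parameter $\underline{\tau}_{0}$, not around an arbitrary prescribed target $(x_{*},x_{*}')$. You never explain how to first bring the pair near $(x_{*},x_{*}')$. The intermediate value theorem, which you invoke, is one-dimensional and does not help you solve a $4\times 4$ system globally on a torus. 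Moreover, your nondegeneracy condition $f_{1}(p_{0})\neq f_{1}(p_{0}')$ is too weak: under the horizontal shear the $(q,q')$-coordinates move along the line $t\mapsto(q_{0}+t f_{1}(p_{0}),\,q_{0}'+t f_{1}(p_{0}'))$ in $\mathbb{T}^{2}$, and if $f_{1}(p_{0})/f_{1}(p_{0}')$ is rational this line is closed and does not come near a generic target $(q_{*},q_{*}')$.

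The paper's proof uses a genuinely different global mechanism that your outline misses. It first drives the pair to a state where $f_{1}(p)$ and $f_{1}(p')$ are \emph{rationally linearly independent}; then the linear flow $t\mapsto(q+t f_{1}(p),\,q'+t f_{1}(p'))$ is dense in $\mathbb{T}^{2}$ by Kronecker's theorem, so one horizontal shear suffices to land $(q,q')$ in $B_{\varepsilon/3}(q_{*})\times B_{\varepsilon/3}(q_{*}')$. A second pass does the same for the $p$-coordinates after arranging rational independence of $f_{2}(q),f_{2}(q')$. The bulk of the paper's argument is the case analysis showing that from any $(x,x')\in\mathcal{O}^{(2)}$ one can reach a configuration with $|f_{1}(p)|\neq|f_{1}(p')|$ (whence rational independence is generic); this is exactly where the exclusion of $\Delta=\bigcup_{k}\mathcal{I}_{k}$ is used, since persistent equality $|f_{1}(p)|=|f_{1}(p')|$ and $|f_{2}(q)|=|f_{2}(q')|$ under all shears forces membership in $\Delta$. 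Your stage (iii) gestures at this but the phrase ``perturbing the $\tau_{i}$'s slightly'' cannot move $\tilde{x}'$ to an arbitrary neighborhood of $y_{0}'$; small perturbations produce small motions, and you again need a density mechanism.
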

    \begin{proof}
    	Let $x=(q,p), x'=(q',p'), x_*=(q_*,p_*), x_*'=(q_*',p_*')$. Define the sets as follows:
    	\[
    	A=\{(x,x')\in \mathcal{O}^{(2)}: f_1(p) \text{ and } f_1(p') \text{ are rationally linearly independent}\},
    	\]
    	\[
    	B=\{(x,x')\in \mathcal{O}^{(2)}: q\in B_{\varepsilon/3}(q_*), q'\in B_{\varepsilon/3}(q'_*)\},
    	\]
    	\[
    	C=\{(x,x')\in \mathcal{O}^{(2)}: (x,x')\in B_{\varepsilon/3}(B), f_2(q) \text{ and } f_2(q') \text{ are rationally linearly independent}\}.
    	\]
    	To establish that $B_\varepsilon(x_*,x_*')$ is reachable from $\mathcal{O}^{(2)}$, it is sufficient to demonstrate the following:
    	\begin{itemize}
    		\item[(1)] $A$ is reachable from $\mathcal{O}^{(2)}$. Fix any $(x,x')\in \mathcal{O}^{(2)}$ and let $\delta>0$ be arbitrary. Define the projection map $\pi: I\to\mathbb{T}^2$ by $\pi(x,x')=(p, p')$. Define $F_{(x,x')}: [0,\delta)^2\to \mathbb{T}^2$ by 
    		\begin{equation*}
    			F_{(x,x')}(\tau_1,\tau_2)=\pi\circ\tilde{\Phi}_{\underline{\tau}}(x,x')
    			=\begin{pmatrix}
    				p+\tau_2 f_2(q+\tau_1f_1(p))\\
    				p'+\tau_2 f_2(q'+\tau_1f_1(p'))
    			\end{pmatrix}
    		\end{equation*} 
    		Then, 
    		\[
    		D_{(\tau_1,\tau_2)}F_{(x,x')}=
    		\begin{pmatrix}
    			\tau_2 f_2'(q+\tau_1f_1(p))f_1(p)& f_2(q+\tau_1f_1(p)) \\
    			\tau_2 f_2'(q'+\tau_1f_1(p'))f_1(p')& f_2(q'+\tau_1f_1(p'))
    		\end{pmatrix}.
    		\]
    		\begin{itemize}
    			\item[(I)] If $|f_1(p)|\neq |f_1(p')|$, there exists $(\tau_1, \tau_2) \in (0, \delta)^2$ such that $\det D_{(\tau_1,\tau_2)}F_{(x,x')} \neq 0$. Thus, by the constant rank theorem, $A$ is reachable from $\mathcal{O}^{(2)}$. We now proceed to prove this assertion. Without loss of generality, we assume that $f_1(p) \neq 0$. By contradiction, suppose that $\det D_{(\tau_1, \tau_2)} F_{(x,x')} \equiv 0$ for all $(\tau_1, \tau_2) \in (0, \delta)^2$. A direct computation yields that $f_2(q + \tau_1 f_1(p)) \equiv C f_2(q' + \tau_1 f_1(p'))$ for all $\tau_1\in (0,\delta)$, where $C\in\mathbb{R}$ is a constant. If $C=0$, it is evident that $f_2(q + \tau_1 f_1(p)) \equiv 0$ for all $\tau_1\in (0,\delta)$ cannot be true. If $C\neq 0$, we consider the following three cases.
    			\begin{itemize}
    				\item[(A)] $C=\pm1$. When $C=1$, for all $t$ in an open interval $I$ of $\mathbb{R}$, we obtain
    				\begin{equation*}
    					f_2(t) \equiv f_2\left(q'-\frac{qf_1(p')}{f_1(p)}+\frac{tf_1(p')}{f_1(p)}\right).
    				\end{equation*}
    				If $f_1(p') = 0$, $f_2(t) \equiv f_2(q')$ for all $t\in I$, which is a contradiction. If $f_1(p')\neq 0$, without loss of generality, we assume $|f_1(p')/f_1(p)|<1$. 
    				Denote $a=q'-qf_1(p')/f_1(p)$, $b=f_1(p')/f_1(p)$. Substituting into the above equation and iterating gives:
    				\[
    				f_2(t)\equiv f_2(a+bt)\equiv\cdots \equiv f_2\left(\frac{1-b^k}{1-b}a+b^k t \right).
    				\]
    				Let $k\to\infty$, we obtain:
    				\[
    				f_2(t) \equiv f_2\left(\frac{a}{1-b}\right),
    				\]
    				for all $t\in I$, which is a contradiction. The case when $C=-1$ is analogous to that for $C=1$. 
    				\item[(B)] $C\neq \pm 1$. Without loss of generality, we assume $0<|C|<1$. Then for all $t$ in an open interval $I$ of $\mathbb{R}$, we obtain
    				\begin{equation*}
    					f_2(t) \equiv C f_2\left(q'-\frac{qf_1(p')}{f_1(p)}+\frac{tf_1(p')}{f_1(p)}\right).
    				\end{equation*} 
    				Denote $a=q'-qf_1(p')/f_1(p)$, $b=f_1(p')/f_1(p)$. Substituting into the above equation and iterating gives:
    				\[
    				f_2(t)\equiv Cf_2(a+bt)\equiv\cdots \equiv C^kf_2\left(\frac{1-b^k}{1-b}a+b^k t \right).
    				\]
    				Therefore, $|f_2(t)|\leq |C|^k\sup_{x \in S}f_2(x)$ for all $t\in I$. Let $k\to\infty$, we obtain $f_2(t)\equiv 0$ for all $t\in I$, which is a contradiction.
    			\end{itemize}
    			\item[(II)] In the case where $|f_1(p)| = |f_1(p')|$, we divide the analysis into two cases.
    			\begin{itemize}
    				\item[(A)] If $|f_2(q)|\neq |f_2(q')|$, we can choose $\tau_1=0$ and $0 < \tau_2<\delta$ such that $|f_1(p+\tau_2f_2(q))|\neq |f_1(p'+\tau_2f_2(q'))|$. We can then repeat the proof of (1)(I).
    				\item[(B)] If $|f_2(q)| = |f_2(q')|$, we split the analysis into the following cases .
    				\begin{itemize}
    					\item[(i)] If $f_1(p) = f_1(p')\neq 0$, and without loss of generality, assume $f_2(q) = f_2(q')=0$. We proceed by considering two cases. First, if there exists $\tau_1\in(0, \delta)$ such that $|f_2(q+\tau_1f_1(p))|\neq |f_2(q'+\tau_1f_1(p))|$, we can then repeat the proof of (1)(II)(A). Alternatively, if for all $\tau_1 \in (0, \delta)$, we have $|f_2(q+\tau_1f_1(p))|\equiv |f_2(q'+\tau_1f_1(p))|$. Since $f_2\in C^\omega(\mathbb{S}^1,\mathbb{R})$, then either $f_2(q+\tau_1f_1(p))\equiv f_2(q'+\tau_1f_1(p))$ or $f_2(q+\tau_1f_1(p))\equiv -f_2(q'+\tau_1f_1(p))$ for all $\tau_1\in [0,\infty)$. 
    					\begin{enumerate}
    						\item[(a)] If $f_2(q+\tau_1f_1(p))\equiv f_2(q'+\tau_1f_1(p))$ for all $\tau_1\in [0,\infty)$, we claim that there exists $\tau_1, \tau_2\in (0,\delta)$ such that $|f_1(p+\tau_2f_2(q+\tau_1f_1(p)))|\neq |f_1(p'+\tau_2f_2(q'+\tau_1f_1(p)))|$, we can then repeat the proof of (1)(I). By controdiction, assume $|f_1(p+\tau_2f_2(q+\tau_1f_1(p)))|\equiv |f_1(p'+\tau_2f_2(q'+\tau_1f_1(p)))|$ for all 
    						$\tau_1, \tau_2\in (0,\delta)$. Since $f_1(p) = f_1(p')\neq 0$, $f_1(p+\tau_2f_2(q+\tau_1f_1(p))) \equiv f_1(p'+\tau_2f_2(q'+\tau_1f_1(p)))$ for all 
    						$\tau_1, \tau_2\in (0,\delta)$, then $(q,p,q',p')\notin \mathcal{O}^{(2)}$, leading to a contradiction with $(q,p,q',p')\in \mathcal{O}^{(2)}$. 
    						\item[(b)] If $f_2(q+\tau_1f_1(p))\equiv -f_2(q'+\tau_1f_1(p))$ for all $\tau_1\in [0,\infty)$, the proof follows similarly to (II)(B)(i)(a).
    					\end{enumerate}
    					\item[(ii)] If $f_1(p') = -f_1(p)$ and $f_1(p)\neq 0$, the proof follows similarly to (II)(B)(i).
    					\item[(iii)] If $f_1(p')= f_1(p)=0$, since $(q,p,q',p')\notin\mathcal{O}^{(2)}$, then $|f_2(q)| = |f_2(q')| \neq 0$. Without loss of generality, assume that $f_2(q) = f_2(q')\neq 0$, and we proceed by considering two cases. First, if there exists $\tau_2\in(0, \delta)$ such that $|f_1(p+\tau_2f_2(q))|\neq |f_1(p'+\tau_2f_2(q'))|$, we can then repeat the proof of (1)(I). Alternatively, if for all $\tau_2 \in (0, \delta)$, we have $|f_1(p + \tau_2 f_2(q))| \equiv |f_1(p' + \tau_2 f_2(q'))|$. Since $f_1\in C^\omega(\mathbb{S}^1,\mathbb{R})$, then either $f_1(p + \tau_2 f_2(q)) \equiv f_1(p' + \tau_2 f_2(q'))$ or $f_1(p + \tau_2 f_2(q)) \equiv -f_1(p' + \tau_2 f_2(q'))$ for all $\tau_2\in [0,\infty)$. 
    					\begin{enumerate}
    						\item[(a)] If $f_1(p + \tau_2 f_2(q)) \equiv f_1(p' + \tau_2 f_2(q'))$ for all $\tau_2\in [0,\infty)$, we claim that there exists $\tau_2, \tau_3\in (0,\delta)$ such that $|f_2(q+\tau_3f_1(p + \tau_2 f_2(q)))|\neq |f_2(q'+\tau_3f_1(p + \tau_2 f_2(q)))|$, we can then repeat the proof of (1)(II)(A). By controdiction, assume $|f_2(q+\tau_3f_1(p + \tau_2 f_2(q)))|\equiv |f_2(q'+\tau_3f_1(p + \tau_2 f_2(q)))|$ for all 
    						$\tau_2, \tau_3\in (0,\delta)$. If $f_2(q+\tau_3f_1(p + \tau_2 f_2(q)))\equiv f_2(q'+\tau_3f_1(p + \tau_2 f_2(q)))$, then $(q,p,q',p')\notin \mathcal{O}^{(2)}$,  leading to a contradiction with $(q,p,q',p')\in \mathcal{O}^{(2)}$. If $f_2(q+\tau_3f_1(p + \tau_2 f_2(q)))\equiv -f_2(q'+\tau_3f_1(p + \tau_2 f_2(q)))$, then $f_2(q)=-f_2(q')$, a controdiction with $f_2(q)=f_2(q')\neq 0$.
    						\item[(b)] If $f_1(p + \tau_2 f_2(q)) \equiv -f_1(p' + \tau_2 f_2(q'))$ for all $\tau_2\in [0,\infty)$, the proof follows similarly to (II)(B)(i)(a).
    					\end{enumerate}
    				\end{itemize}
    			\end{itemize}
    		\end{itemize}
    		\item[(2)] $B$ is reachable from $A$.\\
    		Fix any $(x,x')\in A$. The flow $(q,q')\mapsto (q+t f_1(p), q'+tf_1(p'))$ on $\mathbb{T}^2$ is dense since $f_1(p)$ and $f_1(p')$ are rationally linearly independent. Then we can choose $\tau_1>0$ and $\tau_2=0$ such that $q+\tau_1 f_1(p)\in B_{\varepsilon/3}(q_*)$ and $q'+\tau_1f_1(p')\in B_{\varepsilon/3}(q_*')$, thus $B$ is reachable from $A$.
    		\item[(3)] $C$ is reachable from $B$.\\
    		This proof is similar to case (1), and by choosing 
    		\[\delta< \frac{\varepsilon}{100M},\]
    		where $M:=\sup_{x \in S}\max\{|f_1(x)|, |f_2(x)|\}$, we deduce that $C$ is reachable from $B$.
    		\item[(4)] $B_\varepsilon(x_*,x_*')$ is reachable from $C$.\\
    		Employing a method of proof similar to that used in case (2), we establish that $B_\varepsilon(x_*,x_*')$ is reachable from $C$.
    	\end{itemize}
    \end{proof}
    For any $(q, p, q', p')$ in $\mathcal{I}_1$, $p'-p$ represents a period of the function $f_1$ and $q'-q$ represents a period of $f_2$. Since $f_i\in C^{\omega}(\mathbb{S}^1,\mathbb{R})$, $i=1, 2$ are non-constant functions, there exist finite constants $a_i, a_j'$ such that for any $(q, p, q', p') \in \mathcal{I}_1$, there exist $i$ and $j$ satisfying $q'-q=a_i$ and $p'-p=a_j'$. Consequently, $\mathcal{I}_1$ can be characterized as the union of the following finitely many invariant sub-manifolds
    \[
    \mathcal{I}_{a_i,a_j'}=\{(q, p, q', p') \in \mathcal{O} \times \mathcal{O}: q'-q=a_i,\, p'-p=a_j'\}.
    \]
    For any $(q, p, q', p')$ in $\mathcal{I}_2$, $(p'+p)/2$ represents the axis of symmetry for the function $f_1$ and $2(q'-q)$ represents a period of $f_2$. Since $f_i\in C^{\omega}(\mathbb{S}^1,\mathbb{R})$, $i=1, 2$ are non-constant functions, there exist finite constants $b_i, b_j'$ such that for any $(q, p, q', p') \in \mathcal{I}_2$, there exist $i$ and $j$ satisfying $2(q'-q)=b_i$ and $(p'+p)/2=b_j'$. Consequently, $\mathcal{I}_2$ can be characterized as the union of the following finitely many invariant sub-manifolds
    \[
    \mathcal{I}_{b_i,b_j'}=\{(q, p, q', p') \in \mathcal{O} \times \mathcal{O}: (p'+p)/2=b_j', \, 2(q'-q)=b_i,\, f_2(q' + t) \equiv -f_2(q + t) \, \text{for all } t \in \mathbb{R}\}.
    \]
    For any $(q, p, q', p')$ in $\mathcal{I}_3$, $2(p'-p)$ represents a period of $f_1$ and $(q'+q)/2$ represents the axis of symmetry for the function $f_2$. Since $f_i\in C^{\omega}(\mathbb{S}^1,\mathbb{R})$, $i=1, 2$ are non-constant functions, there exist finite constants $c_i, c_j'$ such that for any $(q, p, q', p') \in \mathcal{I}_3$, there exist $i$ and $j$ satisfying $2(p'-p)=c_j'$ and $(q'+q)/2=c_i$. Consequently, $\mathcal{I}_3$ can be characterized as the union of the following finitely many invariant sub-manifolds
    \[
    \mathcal{I}_{c_i,c_j'}=\{(q, p, q', p') \in \mathcal{O} \times \mathcal{O}: (q'+q)/2=c_i, \, 2(p'-p)=c_j',\, f_1(p' + t) \equiv -f_2(p + t) \, \text{for all } t \in \mathbb{R}\}.
    \]
    For any $(q, p, q', p')$ in $\mathcal{I}_4$, the graph of $f_2$ is symmetric about the point $\left((q'+q)/2,0\right)$ and the graph of $f_1$ is symmetric about the point $\left((p'+p)/2,0\right)$. Since $f_i\in C^{\omega}(\mathbb{S}^1,\mathbb{R})$, $i=1, 2$ are non-constant functions, there exist finite constants $d_i, d_j'$ such that for any $(q, p, q', p') \in \mathcal{I}_4$, there exist $i$ and $j$ satisfying $(q'+q)/2=d_i$ and $(p'+p)/2=d_j'$. Consequently, $\mathcal{I}_4$ can be characterized as the union of the following finitely many invariant sub-manifolds
    \[
    \mathcal{I}_{d_i,d_j'}=\{(q, p, q', p') \in \mathcal{O} \times \mathcal{O}: (q'+q)/2=d_i, \, (p'+p)/2=d_j'\}.
    \]
    Next, we prove the Lyapunov-Foster drift condition for the two-point chain $\{\tilde{\Phi}_{\underline{\tau}}^m\}$. Without loss of generality, we only need to consider $\Delta=\cup_{k=0}^4 I_k$, where $I_0=\{(x,y)\in \mathcal{O} \times \mathcal{O}: y=x\}$, $I_1=\mathcal{I}_{a_1,a_1'}$, $I_2=\mathcal{I}_{b_1,b_1'}$ and $I_3=\mathcal{I}_{c_1,c_1'}$, $I_4=\mathcal{I}_{d_1,d_1'}$. For $s>0$, we define 
    \[
    I_k(s):=\left\{(x,y)\in\mathcal{O}^{(2)}: d(x(k),y)<s\right\},\quad k=0, 1, 2, 3, 4
    \]
    where $x=(q, p)$, $x(0)=x$, $x(1)=(a_1+q, a_1'+p) \mod 2\pi\mathbb{Z}^2$, $x(2)=(b_1/2+q, 2b_1'-p)\mod 2\pi\mathbb{Z}^2$, $x(3)=(2c_1-q, c_1'/2+p)\mod 2\pi\mathbb{Z}^2$ and $x(4)=(2d_1-q, 2d_1'-p)\mod 2\pi\mathbb{Z}^2$. First, we consider the case where $(x,y) \in I_0(s_*)$ with $s^*>0$ to be determined. We need the following results from Proposition 4.5 in \cite{MT}. 
	\begin{lem}\cite[Proposition 4.5]{MT}\label{W}
		Assume that the one-point and projective chains are $V$-uniformly geometrically ergodic and $\check{V}$-uniformly geometrically ergodic, respectively, and furthermore, that the Lyapunov exponent $\lambda_1$ of $\{\Phi_{\underline{\tau}}^m\}$ is positive. Then there exist $h, s_*>0$, $0<\gamma<1$ and a continuous function $w:\mathcal{O}^{(2)}\to (0,\infty)$ satisfying $0<c\leq w\leq C<\infty$, such that $W_0(x,y):=d(x,y)^{-h}w(x,y)$ satisfies the following:
		\begin{equation*}
			P^{(2)}W_0(x,y)<\gamma W_0(x,y),
		\end{equation*} 
	for all $(x,y)\in I_0(s_*)$.
	\end{lem}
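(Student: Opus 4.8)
The plan is to follow \cite[Proposition 4.5]{MT}: near the diagonal the two-point chain is, to leading order, the projective chain twisted by the logarithmic growth cocycle, so the desired local contraction of $W_0$ reduces to producing a positive eigenfunction of a twisted transfer operator on $P\mathcal{O}$ whose leading eigenvalue lies strictly below $1$ precisely because $\lambda_1>0$.

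First I would record the linearization. Since each $\tau_i$ ranges in $[0,T]$ and $f_1,f_2$ are analytic on the compact $\mathbb{S}^1$, there is a constant $L=L(T,f_1,f_2)\ge 1$ with $\|D_x\Phi_{\underline{\tau}}(x)^{\pm 1}\|\le L$ and $\|D_x^2\Phi_{\underline{\tau}}(x)\|\le L$ for all $x$ and all admissible $\underline{\tau}$. Hence, writing $v:=\exp_x^{-1}(y)$ and $\hat v:=v/|v|$ for $(x,y)\in I_0(s)$ with $s$ small, a Taylor expansion together with $|D_x\Phi_{\underline{\tau}}(x)\hat v|\ge L^{-1}$ gives
\[
d\bigl(\Phi_{\underline{\tau}}(x),\Phi_{\underline{\tau}}(y)\bigr)=\bigl|D_x\Phi_{\underline{\tau}}(x)\hat v\bigr|\,d(x,y)\,\bigl(1+O(s)\bigr),
\]
with the $O(s)$ uniform in $x$ and $\underline{\tau}$, and the direction from $\Phi_{\underline{\tau}}(x)$ to $\Phi_{\underline{\tau}}(y)$ agreeing with $\widehat{D_x\Phi_{\underline{\tau}}(x)\hat v}$ (the fibre part of $\check\Phi_{\underline{\tau}}(x,\hat v)$) up to $O(s)$.

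Next I would build the weight $w$. For $z$ near $0$ define the twisted operator on $P\mathcal{O}$ by
\[
\check P_z g(x,v):=\mathbb{E}\Bigl[\bigl|D_x\Phi_{\underline{\tau}}(x)v/|v|\bigr|^{z}\,g\bigl(\check\Phi_{\underline{\tau}}(x,v)\bigr)\Bigr].
\]
Because $\bigl|D_x\Phi_{\underline{\tau}}(x)v/|v|\bigr|\in[L^{-1},L]$ uniformly, $\check P_z$ is a bounded operator on $\mathcal{M}_{\check V}(P\mathcal{O})$, analytic in $z$, with $\|\check P_z-\check P\|_{\mathcal{M}_{\check V}\to\mathcal{M}_{\check V}}=O(z)$, while $\check P=\check P_0$ has a spectral gap on $\mathcal{M}_{\check V}(P\mathcal{O})$ by Theorem \ref{Uniformly geometrically ergodic for projective chain}. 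Analytic perturbation theory then yields, for $|z|$ small, a simple leading eigenvalue $\rho(z)$ (analytic, $\rho(0)=1$) with a positive continuous eigenfunction $\check W_z\in\mathcal{M}_{\check V}(P\mathcal{O})$, $\check W_0\equiv 1$, and the first-order formula
\[
\rho'(0)=\int_{P\mathcal{O}}\mathbb{E}\bigl[\log|D_x\Phi_{\underline{\tau}}(x)v/|v||\bigr]\,d\check\mu(x,v)=\lambda_1>0,
\]
where $\check\mu$ is the unique stationary measure of the projective chain and the last identity is the Furstenberg formula (valid by the integrability condition recorded in Section~3; see \cite{B}). Thus $\rho(-h)<1$ for all small $h>0$; fix such an $h$, set $\gamma:=\rho(-h)<1$ and $\check W:=\check W_{-h}$, and check $0<c\le\check W\le C<\infty$ (using the $\mathcal{M}_{\check V}$-bound, boundedness of $\check V$ on compacta, and a Harnack-type argument along the irreducibility of Lemma \ref{irreducibility for projective chain}). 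Finally let $w(x,y):=\check W\bigl(x,\widehat{\exp_x^{-1}(y)}\bigr)$ near $\Delta$, extended continuously, positively and boundedly to all of $\mathcal{O}^{(2)}$.

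Combining the two ingredients, for $(x,y)\in I_0(s)$,
\[
P^{(2)}W_0(x,y)=\mathbb{E}\Bigl[d(\Phi_{\underline{\tau}}x,\Phi_{\underline{\tau}}y)^{-h}\,w(\Phi_{\underline{\tau}}x,\Phi_{\underline{\tau}}y)\Bigr]=d(x,y)^{-h}\,\check P_{-h}\check W(x,\hat v)\,\bigl(1+O(s)\bigr)=\gamma\,(1+O(s))\,W_0(x,y),
\]
so choosing $s_*>0$ with $\gamma(1+O(s_*))<1$ gives the claim. I expect the main obstacle to be the middle step: making the perturbation argument rigorous on the non-compact bundle $P\mathcal{O}$ — in particular extracting from the spectral gap of $\check P$ on $\mathcal{M}_{\check V}$ a genuine continuous eigenfunction bounded \emph{below} by a positive constant — and ensuring the Taylor remainder in the linearization is uniform up to $F$; both are controlled by the uniform bound $\|D_x\Phi_{\underline{\tau}}^{\pm1}\|\le L$ together with the ergodic inputs already in place, exactly as in \cite{MT,BCZG,Co}.
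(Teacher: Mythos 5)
The paper does not prove Lemma~\ref{W}; it merely cites it (as \textit{Proposition 4.5 in [MT]}, although that reference — Meyn and Tweedie — contains no such statement about Lyapunov exponents and twisted transfer operators, so the intended citation is almost certainly to Blumenthal--Coti Zelati--Gvalani \cite{BCZG} or to Cooperman \cite{Co}, whose proofs your proposal closely mirrors). Your route — linearizing the two-point map near the diagonal to reduce it to the projective chain twisted by the $\log\|D\Phi\|$ cocycle, introducing $\check{P}_z$, invoking the spectral gap of $\check{P}$ on $\mathcal{M}_{\check V}(P\mathcal{O})$ to run analytic perturbation theory, and using the Furstenberg identity $\rho'(0)=\lambda_1>0$ to conclude $\rho(-h)<1$ — is precisely the argument in those sources, so at the level of strategy there is nothing to disagree with.

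The one point you flag as the ``main obstacle'' is, in fact, the genuine gap, and the way you propose to paper over it is too quick. Uniform geometric ergodicity of $\check P$ in $\mathcal{M}_{\check V}$ (with $\check V(q,p,u)=V(q,p)\to\infty$ near $F$) gives a spectral gap only in that weighted norm, so the perturbed eigenfunction $\check W_{-h}$ is \emph{a priori} controlled only up to a factor of $\check V$, i.e.\ it may diverge as $x\to F$, whereas the lemma requires $w$ to satisfy $0<c\le w\le C<\infty$ on all of $\mathcal{O}^{(2)}$. Restricting to $C_b(P\mathcal{O})$ does not help directly either, because the $\mathcal{M}_{\check V}$-gap does not yield a uniform supremum bound on $\check P^m g-\int g\,d\check\mu$ near $F$. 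One must argue, as in \cite{Co}, that $\check P_z$ preserves a suitable space of bounded (or Hölder) observables and that the eigenfunction can be produced by an iteration $\rho(z)^{-m}\check P_z^m\mathbf 1$ whose values stay in $[L^{-2m|z|},L^{2m|z|}]$ combined with the uniform-in-$F$ bound $\|D_x\Phi_{\underline{\tau}}^{\pm1}\|\le L$, or by a Harris/minorization argument on the projective chain that explicitly tracks the multiplicative weight. You sketch a Harnack idea in this direction, but as written it does not establish the required two-sided bound on $\check W_{-h}$ uniformly up to $F$; that step must be carried out rather than appealed to. Apart from this, the linearization estimate and the Furstenberg identity $\rho'(0)=\lambda_1$ are stated correctly, the latter using the unique ergodicity of the projective chain from Theorem~\ref{Uniformly geometrically ergodic for projective chain}.
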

    Let $s_*$ be the value specified in Lemma \ref{W}. For any $(x,y) \in \mathcal{O}^{(2)}$ and for $k=1, 2, 3, 4$, define
    \begin{equation*}
    	W_k(x,y)= W_0(x(k),y). 
    \end{equation*}
    From Lemma \ref{W}, it follows that
    \begin{equation*}
    	P^{(2)}W_k(x,y)=P^{(2)}W_0(x(k),y)< \gamma W_0(x(k),y) =\gamma W_k(x,y),
    \end{equation*}
    for all $(x,y) \in I_k(s_*)$. Define the function $W: \mathcal{O}^{(2)}\to (0,\infty)$ by
    \begin{equation*}
    	W(x,y)=\max_{k}W_k(x,y).
    \end{equation*}
    Set $\Delta(s_*)=\cup_{k=0}^4I_k(s_*)$. Then, for all $(x,y)\in \Delta(s_*)$, we have
    \begin{equation*}
    	P^{(2)}W(x,y)\leq \gamma W(x,y).
    \end{equation*}
    As established in Proposition \ref{Lyapunov-Foster drift condition}, the function $V$ on $\mathcal{O}$ is given by
    \begin{equation*}
    	V(q,p)=\max(d(q,C_{f_2}),d(p,C_{f_1}))^{-\beta}+b
    \end{equation*}
    We now introduce a function $V_1$, defined as $V_1(q,p)=V(q,p)-b$. With the results above, we proceed to prove the Lyapunov-Foster drift condition for the two-point chain, which is inspired by \cite[Proposition 18]{Co}.
    \begin{pro}\label{VV}
    	Let $\{\tilde{\Phi}_{\underline{\tau}}^m\}$ be as above. Assume $(H1)$ and $(H2)$ hold. Then there exists an integrable function $V^{(2)}:\mathcal{O}^{(2)}\to [1,\infty)$ such that the transition kernel $P^{(2)}$ of $\{\tilde{\Phi}_{\underline{\tau}}^m\}$ is $V^{(2)}$-uniformly geometrically ergodic. 
    \end{pro}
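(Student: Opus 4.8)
The plan is to verify the four hypotheses of Theorem \ref{conditions for uniformly geometrically ergodic} for the two-point chain $\{\tilde{\Phi}_{\underline{\tau}}^m\}$ on $M=\mathcal{O}^{(2)}$ with vector fields $\mathcal{F}=\tilde{\mathcal{X}}$. Three of them are quick. The kernel $P^{(2)}$ is Feller because $(x,y,\underline{\tau})\mapsto\tilde{\Phi}_{\underline{\tau}}(x,y)$ is jointly continuous, so dominated convergence applies to $P^{(2)}g$ for bounded continuous $g$. An open small set exists by hypothesis $(H3)$, which is exactly $\text{Lie}_{\tilde{x}}(\tilde{\mathcal{X}})=T_{\tilde{x}}\mathcal{O}^{(2)}$ for some $\tilde{x}$, together with Lemma \ref{small set} applied with $M=\mathcal{O}^{(2)}$, $\mathcal{F}=\tilde{\mathcal{X}}$. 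Topological irreducibility is Lemma \ref{topologically irreducible for two-point chain}. And strong aperiodicity holds at any point of $\mathcal{O}^{(2)}$ since $\tilde{\Phi}_0=I$ and $\mathbb{P}(\tau<\varepsilon)>0$, just as for the one-point and projective chains. Thus the entire proof reduces to producing an integrable function $V^{(2)}:\mathcal{O}^{(2)}\to[1,\infty)$ satisfying a Lyapunov-Foster drift inequality for $P^{(2)}$.

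For the drift function I would take
\[
V^{(2)}(x,y):=W(x,y)+M_0\bigl(V(x)+V(y)\bigr),
\]
where $W=\max_{0\le k\le 4}W_k$ is the function built above from Lemma \ref{W}, $V$ is the one-point Lyapunov function of Proposition \ref{Lyapunov-Foster drift condition}, and $M_0\ge 1$ is a large constant to be fixed. Taking the exponent $h$ in Lemma \ref{W} small enough that $h<2$ makes $W$ integrable with respect to $\mu\otimes\mu$ (the diagonal and the submanifolds $I_1,\dots,I_4$ all have codimension two), and since $V$ is integrable on $\mathcal{O}$ (recall $0<\beta<1/2$), $V^{(2)}$ is integrable; clearly $V^{(2)}\ge 2M_0\ge 1$. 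To verify the drift I split $\mathcal{O}^{(2)}=\Delta(s_*)\cup\bigl(\mathcal{O}^{(2)}\setminus\Delta(s_*)\bigr)$. On $\Delta(s_*)$ we have already recorded $P^{(2)}W\le\gamma W$, while $PV\le\alpha V+b\chi_{C}$ from Proposition \ref{Lyapunov-Foster drift condition} gives $P^{(2)}\bigl(V(x)+V(y)\bigr)\le\alpha\bigl(V(x)+V(y)\bigr)+2b$, so that $P^{(2)}V^{(2)}\le\max(\gamma,\alpha)\,V^{(2)}+2M_0 b$ there. Off $\Delta(s_*)$ one has $W\le C s_*^{-h}$, and I additionally need that $P^{(2)}W$ is bounded there, say $P^{(2)}W\le\bar{W}'$; granting this, and using $M_0\bigl(V(x)+V(y)\bigr)\le V^{(2)}$, one gets $P^{(2)}V^{(2)}\le\alpha\,V^{(2)}+(\bar{W}'+2M_0 b)$. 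In both regimes $P^{(2)}V^{(2)}\le\rho\,V^{(2)}+b_0$ with $\rho:=\max(\gamma,\alpha)<1$ and $b_0:=\bar{W}'+2M_0 b$. Setting $\alpha':=\tfrac12(1+\rho)<1$ and $C^{(2)}:=\{V^{(2)}\le 2b_0/(1-\rho)\}$, the set $C^{(2)}$ is a compact subset of $\mathcal{O}^{(2)}$ — it is closed, it stays uniformly away from every $I_k$ because $W$ is bounded on it, and both coordinates stay away from $F$ because $V(x)+V(y)$ is bounded on it — and $P^{(2)}V^{(2)}\le\alpha' V^{(2)}+b'\chi_{C^{(2)}}$ with $b':=\sup_{C^{(2)}}P^{(2)}V^{(2)}<\infty$. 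Theorem \ref{conditions for uniformly geometrically ergodic} then yields that $P^{(2)}$ is $V^{(2)}$-uniformly geometrically ergodic, with stationary measure $\mu\otimes\mu$.

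The main obstacle is the drift estimate, and inside it the behaviour of $W$. The estimate near $\Delta$, namely $P^{(2)}W\le\gamma W$ on $\Delta(s_*)$, is the deep input: it is Lemma \ref{W} together with its transfer along each of the finitely many invariant submanifolds via the $\max_k W_k$ construction (each $W_k$ being $W_0$ pulled back through the $\Phi_{\underline{\tau}}$-equivariant map $x\mapsto x(k)$), and this is precisely where the positivity of the top Lyapunov exponent (Theorem \ref{Positivity of the top Lyapunov exponent}) and the uniform geometric ergodicity of the projective chain (Theorem \ref{Uniformly geometrically ergodic for projective chain}) enter — as does the explicit description of the invariant set $\Delta$ as a \emph{finite} union of submanifolds of codimension two, over which the near-diagonal estimate can be transported. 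The remaining genuinely new point is the boundedness of $P^{(2)}W$ away from $\Delta(s_*)$ together with the integrability of $W$: this calls for a coarea computation for the two-parameter map $\underline{\tau}\mapsto\tilde{\Phi}_{\underline{\tau}}(x,y)$ of the type carried out in Lemmas \ref{absolutely continuous} and \ref{submersion for almost all time} — off $\Delta(s_*)$ the surface it sweeps out meets each $I_k$ only in a lower-dimensional set, so for $h$ small the singular part of $W$ integrates to something finite and uniformly bounded. Once these inputs are in hand, fixing $M_0$ large and assembling the two regions is a routine constant chase.
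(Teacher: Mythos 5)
Your strategy is sound and, modulo one ingredient you explicitly flag as needed but do not supply, it goes through. The main structural difference from the paper's proof is the shape of the Lyapunov function and the region split. You take $V^{(2)}=W+M_0\bigl(V(x)+V(y)\bigr)$ globally, verify $P^{(2)}V^{(2)}\le \rho V^{(2)}+b_0$ everywhere with a single constant $b_0$, and then take the compact set to be a sublevel set of $V^{(2)}$; the paper instead defines $V^{(2)}$ to equal a constant $c_0$ on a compact set $C$ (both coordinates far from $F$ and away from $\Delta$), equal to $W+a\bigl(V_1(x)+V_1(y)\bigr)$ off $C$ with a \emph{small} coupling constant $a$, and verifies the multiplicative drift $P^{(2)}V^{(2)}\le\tilde{\gamma}V^{(2)}$ only on $\mathcal{O}^{(2)}\setminus C$ by a four-case analysis keyed to the distances of $x$ and $y$ from $F$. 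Your version is cleaner to state (no indicator functions, a simple two-region split) and avoids the paper's delicate choice of the parameters $\varepsilon,\delta,a$; its cost is that it requires one uniform bound that the paper's case-by-case bookkeeping replaces by the tailored constants $d_0,d_2$.

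The ingredient you need — and which you only justify heuristically — is that $P^{(2)}W$ is bounded uniformly on $\mathcal{O}^{(2)}\setminus\Delta(s_*)$. Your suggested route via the coarea formula is the wrong tool here and would be considerably harder than necessary, since it would require uniform transversality of the two-parameter sweep $\underline{\tau}\mapsto\tilde{\Phi}_{\underline{\tau}}(x,y)$ to each $I_k$, with constants uniform in $(x,y)$. What is actually true is the much stronger pointwise statement: the quantity
\[
d_2:=\inf\bigl\{d\bigl(\Phi_{\underline{\tau}}(x(k)),\Phi_{\underline{\tau}}(y)\bigr): 0\le k\le 4,\ \underline{\tau}\in[0,T]^{2},\ (x,y)\notin\Delta(s_*)\bigr\}
\]
is strictly positive. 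This follows by compactness: if it were zero, take a minimizing sequence $(\underline{\tau}_n,x_n,y_n,k)$ with limits $\underline{\tau},x,y$ in $[0,T]^2\times\mathbb{T}^2\times\mathbb{T}^2$; by continuity of the flow and of $z\mapsto z(k)$, $\Phi_{\underline{\tau}}(x(k))=\Phi_{\underline{\tau}}(y)$, hence $x(k)=y$ (using that $\Phi_{\underline{\tau}}$ is a homeomorphism of $\mathbb{T}^2$, and that $x(k)\in F$ whenever $x\in F$ and $F$ is fixed pointwise), contradicting $d(x_n(k),y_n)\ge s_*$ for all $n$. Once $d_2>0$ is established, $W(\tilde{\Phi}_{\underline{\tau}}(x,y))\le d_2^{-h}\max w$ pointwise for all $(x,y)\notin\Delta(s_*)$ and all $\underline{\tau}$, which is exactly your $\bar{W}'$ and also what makes the paper's constant $d_2$ well-defined. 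With that in place your argument closes: off $\Delta(s_*)$ you get $P^{(2)}V^{(2)}\le\alpha V^{(2)}+\bar{W}'+2M_0 b$, on $\Delta(s_*)$ you get $P^{(2)}V^{(2)}\le\max(\gamma,\alpha)V^{(2)}+2M_0 b$, and the sublevel set $C^{(2)}$ is compact for the reasons you give (bounded $W$ keeps you away from every $I_k$; bounded $V(x)+V(y)$ keeps both coordinates away from $F$). Two small additional remarks: since $h$ is handed to you by Lemma \ref{W} rather than chosen, if it happens to exceed $2$ you should first replace $W_0$ by $W_0^{h'/h}$ for some $h'<2$ (Jensen's inequality preserves the drift with rate $\gamma^{h'/h}<1$) to secure integrability; and $M_0$ does not actually need to be large — $M_0\ge 1$ already makes the argument run, since unlike the paper's $a$ you never need to absorb the $V$-term into the slack $(\tilde{\gamma}-\gamma)W$.
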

    \begin{proof}
    	Let $h$, $s_*$, $w$ and $\gamma$ be given by Lemma \ref{W}. Without relabeling, let $\gamma<1$ be large enough to satisfy the drift condition inequality for $V$, so $\gamma\geq \alpha$, as in Proposition \ref{Lyapunov-Foster drift condition}. Let $\varepsilon_0$ be as in the proof of Proposition \ref{Lyapunov-Foster drift condition}. Define
    	\[
    	C:=\{(x,y)\in \mathcal{O}^{(2)}: (x,y)\notin \Delta(s_*)\text{ and } \text{dist}(\{x,y\}, F)\geq \varepsilon\},
    	\]
    	where $F:=\{(q,p): f_1(p)=0 \text{ and } f_2(q)=0\}$ and $\varepsilon>0$ will be chosen later in the proof. For sufficiently small $a>0$, define 
    	\begin{equation*}
    		V^{(2)}(x,y)= \chi_{\mathcal{O}^{(2)}\setminus C}(x,y)(W(x,y)+a(V_1(x)+V_1(y))) + c_0\chi_C(x,y),
    	\end{equation*}
    	where $\chi_D$ is a characteristic function on $D$ and $c_0\geq 1$ is a constant.

    	To start, let $\delta\in (0,\varepsilon_0)$ be choosen small enough such that $\text{dist}(\Phi_{\underline{\tau}}(x), F)<\delta$ implies $\text{dist}(x, F)<s_*/2$ for any $\underline{\tau}\in [0,T]^2$, and let $\varepsilon\leq \delta$. We need to show that there exists $0<\tilde{\gamma}<1$ such that
    	\[
    	\mathbb{E}[V^{(2)}(\tilde{\Phi}_{\underline{\tau}}(x,y))]\leq \tilde{\gamma} V^{(2)}(x,y),
    	\]
    	for any $(x,y)\in {\mathcal{O}^2\setminus C}$. To establish this result, we analyze the following three cases.
    	\begin{itemize}
    		\item[(a)] $\text{dist}(x,F)<\delta$ and $\text{dist}(y, F)<\delta$, then $d(x,y)<s_*$. Denote 
    		\[
    		\tilde{\gamma}:=\frac{\gamma+1}{2}.
    		\]
    		Then, we have
    		\[
    		\mathbb{E}[W(\tilde{\Phi}_{\underline{\tau}}(x,y))+a(V_1(\Phi_{\underline{\tau}}(x))+V_1(\Phi_{\underline{\tau}}(y)))]\leq \tilde{\gamma} (W(x,y)+a(V_1(x)+V_1(y))).
    		\]
    		\item[(b)] $\text{dist}(x,F)\geq \delta$ and $\text{dist}(y, F)\geq\delta$, since $(x,y)\in {\mathcal{O}^2\setminus C}$, then $(x,y)\in \Delta(s_*)$.
    		Denote
    		\[
    		d_0:=\min\{\text{dist}(\Phi_{\underline{\tau}}(z),F): \underline{\tau}\in [0,T]^2 \text{ and } \text{dist}(z,F)\geq \delta\}.
    		\]
    		Clearly, $0<d_0\leq \delta$.
    		Denote
    		\[
    		a:=\frac{\tilde{\gamma}-\gamma}{2}d_0^\beta s_*^{-h}\min w,
    		\]
    		where $\beta$ is given in the proof of  Proposition \ref{Lyapunov-Foster drift condition}.  Then we have 
    		\begin{align*}
    			&\mathbb{E}[W(\tilde{\Phi}_{\underline{\tau}}(x,y))+a(V_1(\Phi_{\underline{\tau}}(x))+V_1(\Phi_{\underline{\tau}}(y)))]\\
    			\leq&\gamma W(x,y)+2ad_0^{-\beta}\\
    			\leq &\tilde{\gamma}W(x,y)+(\gamma-\tilde{\gamma})W(x,y)+2ad_0^{-\beta}\\
    			\leq &\tilde{\gamma}W(x,y)+\tilde{\gamma} aV_1(x)+\tilde{\gamma} aV_1(y).
    		\end{align*}
    	    \item[(c)] Without loss of generality, we consider $\text{dist}(x,F)<\delta$, $\text{dist}(y, F)\geq\delta$. Now we split into two sub-cases:
    	    \begin{itemize}
    	    	\item[(i)] $(x,y)\in \Delta(s_*)$. Then, we have 
    	    	\begin{align*}
    	    		&\mathbb{E}[W(\tilde{\Phi}_{\underline{\tau}}(x,y))+a(V_1(\Phi_{\underline{\tau}}(x))+V_1(\Phi_{\underline{\tau}}(y)))]\\
    	    		\leq&\gamma W(x,y)+\gamma a V_1(x)+ad_0^{-\beta}\\
    	    		\leq &\tilde{\gamma}W(x,y)+\gamma a V_1(x)+(\gamma-\tilde{\gamma})W(x,y)+ad_0^{-\beta}\\
    	    		\leq &\tilde{\gamma}W(x,y)+\tilde{\gamma} aV_1(x)+\tilde{\gamma} aV_1(y).
    	    	\end{align*}
    	    	\item[(ii)] $(x,y)\notin \Delta(s_*)$, since $(x,y)\in {\mathcal{O}^2\setminus C}$, then $\text{dist}(x,F)<\varepsilon$. Let $\Phi_{\underline{\tau}}(x) = (q_1, p_1)$. We define 
    	    	\[
    	    	\Phi_{\underline{\tau}}(x)(0) = \Phi_{\underline{\tau}}(x),
    	    	\]
    	    	\[
    	    	\Phi_{\underline{\tau}}(x)(1) = (a_1 + q_1, a_1' + p_1) \mod 2\pi \mathbb{Z}^2,
    	    	\]
    	    	\[
    	    	\Phi_{\underline{\tau}}(x)(2) = \left( \frac{b_1}{2} + q_1, 2b_1' - p_1 \right) \mod 2\pi \mathbb{Z}^2,
    	    	\]
    	    	\[
    	    	\Phi_{\underline{\tau}}(x)(3) = \left(2c_1 - q_1, \frac{c_1'}{2} + p_1 \right) \mod 2\pi \mathbb{Z}^2,
    	    	\]
    	    	\[
    	    	\Phi_{\underline{\tau}}(x)(4) = (2d_1 - q_1, 2d_1' - p_1) \mod 2\pi \mathbb{Z}^2.
    	    	\]
    	    	Define the quantity $d_2$ as 
    	    	\[
    	    	d_2:=\min\{d(\Phi_{\underline{\tau}}(x)(k),\Phi_{\underline{\tau}}(y)): 0\leq k\leq 4, \ \underline{\tau}\in [0,T]^2 \text{ and } (x,y)\notin \Delta(s_*)\}.
    	    	\]
    	       Choose
    	    	\[
    	    	\varepsilon:=\left(\frac{(\tilde{\gamma}-\gamma)a}{d_2^{-h}\max w+ad_0^{-\beta}}\right)^{\frac{1}{\beta}}.
    	    	\]
    	    	Then, we have
    	    	\begin{align*}
    	    		&\mathbb{E}[W(\tilde{\Phi}_{\underline{\tau}}(x,y))+a(V_1(\Phi_{\underline{\tau}}(x))+V_1(\Phi_{\underline{\tau}}(y)))]\\
    	    		\leq&\mathbb{E}[W(\tilde{\Phi}_{\underline{\tau}}(x,y))+\gamma a V_1(x)+a\mathbb{E}(V_1(\Phi_{\underline{\tau}}(y)))\\
    	    		\leq &d_2^{-h}\max w+\tilde{\gamma} aV_1(x)+(\gamma-\tilde{\gamma})aV_1(x)+ad_0^{-\beta}\\
    	    		\leq &\tilde{\gamma} aV_1(x)+(\tilde{\gamma}-\gamma)a \varepsilon^{-\beta}+(\gamma-\tilde{\gamma})aV_1(x)\\
    	    		\leq &\tilde{\gamma}W(x,y)+\tilde{\gamma} aV_1(x)+\tilde{\gamma} aV_1(y).
    	    	\end{align*}
    	    \end{itemize}
    	\end{itemize}
    \end{proof}
    \begin{thm}[\textbf{Uniform geometric ergodicity for two-point chain}]\label{Uniformly geometrically ergodic for two-point chain}
    	Let $\{\tilde{\Phi}_{\underline{\tau}}^m\}$ be as above. Assume $(H1)$, $(H2)$ and $(H3)$ hold. Then there exists a continuous function $V^{(2)}:\mathcal{O}^{(2)}\to [1,\infty)$ such that the transition kernel $P^{(2)}$ of $\{\tilde{\Phi}_{\underline{\tau}}^m\}$ is $V^{(2)}$-uniformly geometrically ergodic.
    \end{thm}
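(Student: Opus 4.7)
[\textbf{Proof proposal for Theorem \ref{Uniformly geometrically ergodic for two-point chain}}]
The plan is to verify the four hypotheses of Theorem \ref{conditions for uniformly geometrically ergodic} for the two-point chain $\{\tilde{\Phi}_{\underline{\tau}}^m\}$ on the state space $M=\mathcal{O}^{(2)}$ with transition kernel $Q=P^{(2)}$ and driving vector fields $\mathcal{F}=\tilde{\mathcal{X}}=\{\tilde{X}_1,\tilde{X}_2\}$, and then invoke that theorem. The Feller property is immediate: because each $\varphi_{\tau}^{(i)}$ is a smooth flow of an analytic vector field depending continuously on $(\tau,x)$, the map $(x,y)\mapsto \tilde{\Phi}_{\underline{\tau}}(x,y)$ is continuous on $\mathcal{O}^{(2)}$ for every $\underline{\tau}\in[0,T]^{2}$, so $P^{(2)} g$ is continuous whenever $g$ is bounded and continuous by dominated convergence.

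Next, I would check conditions (a)--(d) one by one. For (a), assumption $(H3)$ asserts that $\text{Lie}_{\tilde{x}}(\tilde{\mathcal{X}})=T_{\tilde{x}}\mathcal{O}^{(2)}$ for some $\tilde{x}\in \mathcal{O}^{(2)}$; this is exactly the hypothesis of the abstract Lemma \ref{small set}, which then produces an open $\nu$-small set for $\{\tilde{\Phi}_{\underline{\tau}}^m\}$ near $\tilde{x}$. For (b), Lemma \ref{topologically irreducible for two-point chain} establishes that for every $(x_{*},x'_{*})\in\mathcal{O}^{(2)}$ and every $\varepsilon>0$, the ball $B_{\varepsilon}(x_{*},x'_{*})$ is reachable from any starting point; by the same substitution trick (splitting $(\tau_1,\tau_2)$ into $(\tau_1/2,0,\tau_1/2,\tau_2)$) used after Lemma \ref{irreducibility} to cap the individual times at $T$, this yields a positive-probability event and hence topological irreducibility. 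For (c), strong aperiodicity holds at any point because $\tilde{\Phi}_{\underline{\tau}}=\mathrm{Id}$ when $\underline{\tau}=0$ and $\mathbb{P}(\tau_i<\varepsilon)>0$ for every $\varepsilon>0$, so open neighborhoods of any point have positive one-step transition probability. For (d), Proposition \ref{VV} supplies the required Lyapunov--Foster function $V^{(2)}:\mathcal{O}^{(2)}\to[1,\infty)$; reading its proof, one sees that the constructed drift inequality $\mathbb{E}\,V^{(2)}(\tilde{\Phi}_{\underline{\tau}}(x,y))\leq \tilde{\gamma}V^{(2)}(x,y)$ on $\mathcal{O}^{(2)}\setminus C$ combined with the uniform bound on the compact set $C$ gives the required inequality $P^{(2)}V^{(2)}\leq \alpha V^{(2)}+b\chi_{C}$ with $\alpha=\tilde{\gamma}\in(0,1)$.

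Having verified all four hypotheses together with the Feller property, Theorem \ref{conditions for uniformly geometrically ergodic} immediately yields that $P^{(2)}$ admits a unique stationary measure $\mu^{(2)}$ on $\mathcal{O}^{(2)}$ and is $V^{(2)}$-uniformly geometrically ergodic. The main obstacle I anticipate is the continuity claim on $V^{(2)}$: the function built in Proposition \ref{VV} is assembled using characteristic functions of $\mathcal{O}^{(2)}\setminus C$ and of $C$, so a priori it is only measurable. To promote it to a continuous function without losing the drift inequality, the step I would carry out is to replace the indicator functions by a smooth partition of unity subordinate to a slight enlargement/shrinkage of $C$, choosing the transition region narrow enough that the drift inequality (which holds with strict slack $\tilde{\gamma}<1$ away from $C$) is preserved after convolution, while $W(x,y)$ remains comparable to $d(x(k),y)^{-h}$ near each invariant component $I_k$. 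Since $W_0(x,y)=d(x,y)^{-h}w(x,y)$ is continuous on $\mathcal{O}^{(2)}$ by Lemma \ref{W} and $V_1$ inherits continuity from $V$, the maximum $W=\max_k W_k$ is continuous on $\mathcal{O}^{(2)}$, and a standard smooth cutoff argument then produces the desired continuous $V^{(2)}$.
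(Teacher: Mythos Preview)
Your proposal is correct and follows essentially the same route as the paper: verify (a)--(d) of Theorem \ref{conditions for uniformly geometrically ergodic} via $(H3)$ and Lemma \ref{small set}, Lemma \ref{topologically irreducible for two-point chain}, the identity $\tilde{\Phi}_0=\mathrm{Id}$ together with $\mathbb{P}(\tau<\varepsilon)>0$, and Proposition \ref{VV}, then invoke Theorem \ref{conditions for uniformly geometrically ergodic}. Your closing remark about the continuity of $V^{(2)}$ is in fact more careful than the paper, which states ``continuous'' in the theorem but only constructs an ``integrable'' $V^{(2)}$ with indicator cutoffs in Proposition \ref{VV} and does not address the mollification you outline.
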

    \begin{proof}
	    Strong aperiodicity follows immediately from $\mathbb{P}(\tau<\varepsilon)>0$ for any $\varepsilon>0$ and $\tilde{\Phi}_0(x)=x$. By combining Lemma \ref{small set}, Lemma \ref{topologically irreducible for two-point chain}, and Proposition \ref{VV}, we establish that the conditions of Theorem \ref{conditions for uniformly geometrically ergodic} are satisfied. Consequently, the transition kernel $P^{(2)}$ of $\{\tilde{\Phi}_{\underline{\tau}}^m\}$ is $V^{(2)}$-uniformly geometrically ergodic.
    \end{proof}

  \subsection{Exponential mixing}
	\quad Our goal in this subsection is to prove Theorem \ref{Exponential mixing} assuming $(H1)$, $(H2)$ and $(H3)$. The proof of Theorem \ref{Exponential mixing} uses the following result, which is inspired by \cite[Proposition 5]{Co}.
	\begin{pro}\label{sup mix time}
		Let $\{\Phi_{\underline{\tau}}^m\}$ be as above. Assume $(H1)$, $(H2)$ and $(H3)$ hold. Then for every $g\in L^\infty (\mathbb{T}^{2})$ with mean zero, there exists a random variable $\eta>0$ which is finite almost surely, such that for every ball $B\subseteq \mathbb{T}^{2}$, we have
		\[
		\sup\left\{m\in\mathbb{N}: \left|\frac{1}{|B|}\int_B g(\Phi_{\underline{\tau}}^m(x))dx\right|>1\right\}\leq \eta(\underline{\tau})|\log(r(B))|.
		\]
		for every $\underline{\tau}$.
	\end{pro}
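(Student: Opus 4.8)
The plan is to deduce the estimate from a second--moment bound for the two--point chain, combined with a Borel--Cantelli argument over a countable family of balls, applying the statement to $g$ in place of $u$. Since every $\Phi_{\underline{\tau}}^m$ preserves the Lebesgue probability measure $\mu$ on $\mathcal O$, the product measure $\mu\otimes\mu$ is stationary for $P^{(2)}$ (for a product set $A\times B$, integrating $\mathbb P(\Phi_{\underline{\tau}}(x)\in A,\Phi_{\underline{\tau}}(y)\in B)$ against $\mu\otimes\mu$ and using that $\Phi_{\underline{\tau}}$ preserves $\mu$ gives $\mathbb E_{\underline{\tau}}[\mu(A)\mu(B)]=\mu(A)\mu(B)$); it charges no mass on $\Delta$ (a finite union of lower--dimensional submanifolds), so by the uniqueness part of Theorem~\ref{Uniformly geometrically ergodic for two-point chain} it is \emph{the} stationary measure, and therefore $\int g\otimes g\,d(\mu\otimes\mu)=(\int g\,d\mu)^2=0$ because $g$ has mean zero. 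Using $\mathbb E[g(\Phi_{\underline{\tau}}^m(x))g(\Phi_{\underline{\tau}}^m(y))]=(P^{(2)})^m(g\otimes g)(x,y)$, the $V^{(2)}$--uniform geometric ergodicity of Theorem~\ref{Uniformly geometrically ergodic for two-point chain}, and $\|g\otimes g\|_{V^{(2)}}\le\|g\|_\infty^2$ (since $V^{(2)}\ge1$), I get for every ball $B\subseteq\mathbb T^2$
\begin{align*}
\mathbb E\!\left[\left(\frac1{|B|}\int_B g(\Phi_{\underline{\tau}}^m(x))\,dx\right)^{\!2}\right]
&=\frac1{|B|^2}\int_{B\times B}(P^{(2)})^m(g\otimes g)(x,y)\,dx\,dy\\
&\le\frac{C\|g\|_\infty^2\,\gamma^m}{|B|^2}\int_{B\times B}V^{(2)} .
\end{align*}
The crucial point is that $V^{(2)}$, though unbounded near $F$ and near $\Delta$, is \emph{integrable} (Proposition~\ref{VV}), so $\int_{B\times B}V^{(2)}\le\|V^{(2)}\|_{L^1}$ uniformly in $B$; since $|B|=\pi r(B)^2$, this gives $\mathbb E[(\frac1{|B|}\int_B g\circ\Phi_{\underline{\tau}}^m)^2]\le C_g\,\gamma^m\,r(B)^{-4}$ with $C_g$ depending only on $g$.

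Next I would fix a large integer $K$ (depending only on $\gamma$) and, for $j\in\mathbb N$, let $\mathcal D_j$ be the balls of radius $2^{-j}$ centred at the points of a fixed $c_g2^{-j}$--net of $\mathbb T^2$, so $\#\mathcal D_j\lesssim_g 2^{2j}$; put $\mathcal D=\bigcup_j\mathcal D_j$. By Chebyshev, $\mathbb P(|\frac1{|B|}\int_B g\circ\Phi_{\underline{\tau}}^m|>\tfrac12)\le4C_g\gamma^m r(B)^{-4}$, hence
\[
\sum_{j}\ \sum_{B\in\mathcal D_j}\ \sum_{m\ge Kj}\mathbb P\!\left(\left|\frac1{|B|}\int_B g\circ\Phi_{\underline{\tau}}^m\right|>\tfrac12\right)\ \lesssim_g\ \sum_j 2^{6j}\gamma^{Kj}\ <\ \infty
\]
for $K$ large. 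By Borel--Cantelli, for a.e.\ $\underline{\tau}$ only finitely many triples $(j,B,m)$ with $B\in\mathcal D_j$, $m\ge Kj$ and $|\frac1{|B|}\int_B g\circ\Phi_{\underline{\tau}}^m|>\tfrac12$ occur; taking $\eta_0(\underline{\tau})$ larger than $K/\log2$ and than every ratio $m/|\log r(B)|$ arising from such a triple (and $\eta_0=\infty$ on the null set), we obtain $\sup\{m:\ |\frac1{|B|}\int_B g\circ\Phi_{\underline{\tau}}^m|>\tfrac12\}\le\eta_0(\underline{\tau})\,|\log r(B)|$ for all $B\in\mathcal D$.

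It remains to pass from $\mathcal D$ to an arbitrary ball $B=B(x_0,r)$, say with $2^{-n-1}\le r<2^{-n}$, by a smoothing argument. With $s=2^{-n-L}$ for a fixed $L=L(\|g\|_\infty)$ and $\phi=|B_s|^{-1}\chi_{B(0,s)}$, one has $\chi_B*\phi=\chi_B$ outside an annulus of area $\lesssim s\,r$, and
\[
\int_{\mathbb T^2}(g\circ\Phi_{\underline{\tau}}^m)\,(\chi_B*\phi)=\int_B\left(\frac1{|B_s|}\int_{B(y,s)}g(\Phi_{\underline{\tau}}^m(z))\,dz\right)dy .
\]
Choosing $L$ large enough (so that, after dividing by $|B|\ge\pi2^{-2n-2}$, the annulus term is at most $\tfrac14$) and $c_g$ small enough (so that each $s$--ball average is within $\tfrac14$ of some $\mathcal D_{n+L}$--ball average), one concludes that if every $B'\in\mathcal D_{n+L}$ has $|\frac1{|B'|}\int_{B'}g\circ\Phi_{\underline{\tau}}^m|\le\tfrac12$, then $|\frac1{|B|}\int_B g\circ\Phi_{\underline{\tau}}^m|\le1$. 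Since $\#\mathcal D_{n+L}\lesssim_g2^{2n}$ the Borel--Cantelli series is unaffected, and combining with the previous step and $n\le|\log r|/\log2$ yields $\sup\{m:\ |\frac1{|B|}\int_B g\circ\Phi_{\underline{\tau}}^m|>1\}\le\eta(\underline{\tau})|\log r(B)|$ for all $B$ with $r(B)$ small, with $\eta$ a fixed multiple of $\eta_0$; the bounded remaining range of scales only adds a further finite random constant, absorbed into $\eta$.

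\textbf{Main difficulty.} Modulo Theorem~\ref{Uniformly geometrically ergodic for two-point chain} (which already consumes $(H1)$--$(H3)$, the positivity of $\lambda_1$ and the projective chain), the one delicate point is that the two--point Lyapunov function $V^{(2)}$ is only $L^1$ rather than bounded, so the second moment carries the weight $|B|^{-2}\int_{B\times B}V^{(2)}$; this is harmless precisely because $\int_{B\times B}V^{(2)}\le\|V^{(2)}\|_{L^1}$ does not depend on $B$. The scale--comparison in the last step is routine but must be carried out with constants depending on $g$ yet independent of the scale, so as not to break the summability needed for Borel--Cantelli.
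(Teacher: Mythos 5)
Your proposal is correct and follows essentially the same route as the paper: the second‑moment estimate from the $V^{(2)}$‑uniform geometric ergodicity of the two‑point chain (Theorem~\ref{Uniformly geometrically ergodic for two-point chain}), Chebyshev, integrability of $V^{(2)}$ from Proposition~\ref{VV}, and a Borel--Cantelli argument over dyadic scales; you also correctly fill in the implicit fact that $\mu\otimes\mu$ is the stationary measure for $P^{(2)}$ so that $\int g\otimes g\,d\mu^{(2)}=0$. The only genuine difference is the scale‑comparison step: the paper compresses it into a single union bound over an $O(r^{-2})$ net of balls of radius $r/2$, while you carry it out by mollifying $\chi_B$ and comparing to a $\|g\|_\infty$‑dependent dyadic net---a more explicit realization of the same idea, which makes the dependence of all constants on $g$ visible without affecting summability.
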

	\begin{proof}
		It follows from Theorem \ref{Uniformly geometrically ergodic for two-point chain} that $P^{(2)}$ is $V^{(2)}$-uniformly geometrically ergodic, $P^{(2)}$ admits a unique stationary measure $\mu^{(2)}$ on $\mathcal{O}^{(2)}$, and there exist constants $C>0$ and $\lambda\in (0,1)$ such that the bound 
		\begin{equation*}
			\left |\left(P^{(2)}\right)^m g^{(2)}(x,y)-\int g^{(2)} d\mu^{(2)}\right |\leq CV^{(2)}(x,y)\|g^{(2)}\|_{V^{(2)}}\lambda^m,
		\end{equation*}
		holds for all $(x,y)\in \mathcal{O}^{(2)}$ and $g^{(2)}\in \mathcal{M}_{V^{(2)}}(\mathcal{O}^{(2)})$. Given any $g\in L^\infty (\mathbb{T}^{2})$ with mean zero, take $g^{(2)}(x,y)=g(x)g(y)$, we have
		\begin{equation*}
			\left|\mathbb{E}g(\Phi_{\underline{\tau}}^m(x))g(\Phi_{\underline{\tau}}^m(y))\right|\leq C V^{(2)}(x,y)\|g\|_{L^\infty}^2\lambda^m.
		\end{equation*}
		By Chebyshev's inequality and Fubini's theorem, for any ball $B\subseteq \mathbb{T}^{2}$, we have 
		\begin{align*}
			\mathbb{P}\left\{\left|\frac{1}{|B|}\int_B g(\Phi_{\underline{\tau}}^m(x))dx\right|>\frac{1}{3}\right\}&\leq \frac{9}{|B|^2}\mathbb{E}\left|\int_B g(\Phi_{\underline{\tau}}^m(x))dx\right|^2\\
			&\leq \frac{C\|g\|_{L^\infty}^2\lambda^m}{|B|^2}\int_B\int_B V^{(2)}(x,y)dxdy.
		\end{align*}
		Notationally, we write $C>0$ to denote a constant which may change from line to line. Covering $\mathbb{T}^{2}$ by at most $Cr^{-2}$ many balls of radius $r/2$, we conclude from the union bound that 
		\begin{equation*}
			\mathbb{P}\left\{\exists B, r(B)>\frac{r}{2}\text{ and } \left|\frac{1}{|B|}\int_B g(\Phi_{\underline{\tau}}^m(x))dx\right|>\frac{1}{2}\right\}\leq \frac{C\|g\|_{L^\infty}^2\lambda^m}{r^{6}}\int_{\mathcal{O}^{(2)}}V^{(2)}(x,y)dxdy.
		\end{equation*}
		Using the union bound over all $r=1/i$ for $i\geq 2$, we have
		\begin{align*}
			&\mathbb{P}\left\{\exists B,\ \exists m\geq k|\log(r(B))| \text{ and } \left|\frac{1}{|B|}\int_B g(\Phi_{\underline{\tau}}^m(x))dx\right|>1\right\}\\
			\leq&\sum_{i=2}^{\infty}\sum_{m\geq k\log i}\mathbb{P}\left\{\exists B, r(B)>\frac{1}{i} \text{ and } \left|\frac{1}{|B|}\int_B g(\Phi_{\underline{\tau}}^m(x))dx\right|>\frac{1}{2}\right\}\\
			\leq & C\sum_{i=2}^{\infty}\sum_{m\geq k\log i} i^{6}\lambda^m\leq \frac{C}{1-\lambda}(-7-k\log\lambda)^{-1}.
		\end{align*}
		Define
		\begin{equation*}
			\eta(\underline{\tau}):=\sup_{m, B}\left\{\frac{m}{|\log(r(B))|}: \left|\frac{1}{|B|}\int_B g(\Phi_{\underline{\tau}}^m(x))dx\right|>1\right\}.
		\end{equation*}
		Then we have 
		\begin{align*}
			\mathbb{P}\left\{\eta(\underline{\tau})=\infty\right\}&=\lim_{k\to\infty}\mathbb{P}\left\{\eta\geq k\right\}\\
			&= \lim_{k\to\infty}\mathbb{P}\left\{\exists B,\ \exists m\geq k|\log(r(B))| \text{ and } \left|\frac{1}{|B|}\int_B g(\Phi_{\underline{\tau}}^m(x))dx\right|>1\right\}\\
			&\leq \lim_{k\to\infty}\frac{C}{1-\lambda}(-7-k\log\lambda)^{-1}=0,
		\end{align*}
		so $\eta>0$ is finite almost surely.
	\end{proof}
	\begin{proof}[\textbf{Proof of Theorem \ref{Exponential mixing}}]
		For any initial data $u_0\in L^\infty(\mathbb{T}^{2})$ with mean zero, by Proposition \ref{sup mix time}, there is a random variable $\eta>0$ which is finite almost surely and satisfies 
		\begin{equation*}
			\sup\left\{m\in\mathbb{N}: \frac{1}{|B|}\left|\int_B u_{\underline{\tau}}(m,x)dx\right|>1\right\}\leq \eta(\underline{\tau})|\log (r(B))|,
		\end{equation*}
		for all balls $B\subseteq \mathbb{T}^{2}$ and almost every $\underline{\tau}$. For any integer $\varrho>0$, by the definition of $\text{mix}(b_{\underline{\tau}}^\varrho)$, there exist a sequence of $B_j\subseteq \mathbb{T}^{2}$ such that $r(B_j)\geq \text{mix}(b_{\underline{\tau}}^\varrho)-1/j$ and 
		\[
		\frac{1}{|B_j|}\left|\int_{B_j}u_{\underline{\tau}}(\varrho,x)dx\right|>1.
		\]
		Thus we have $\eta(\underline{\tau})|\log (r(B_j))|\geq \varrho$. Taking the limit on both sides with respect to $j$, we obtain 
		\[\eta(\underline{\tau})\left|\log \text{mix}(b_{\underline{\tau}}^\varrho)\right|\geq\varrho.\]
		And since $\|D_xb_{\underline{\tau}}^\varrho\|_{L^1([0,1]\times \mathbb{T}^{2})}\leq CT\varrho$ for almost every $\underline{\tau}$, it follows that 
		\begin{equation*}
			\left|\log \text{mix}(b_{\underline{\tau}}^\varrho)\right|\geq \xi(\underline{\tau})\|D_xb_{\underline{\tau}}^\varrho\|_{L^1([0,1]\times \mathbb{T}^{2})},
		\end{equation*}
		where $\xi^{-1}(\underline{\tau})=CT\eta(\underline{\tau})$.

	\end{proof}

	\section{Proofs of Corollary \ref{The Pierrehumbert model with randomized time} and Corollary \ref{Analog model to the Chirikov standard map}}
	\quad In this section, we aim to prove Corollary \ref{The Pierrehumbert model with randomized time}, concerning exponential mixing for the Pierrehumbert model with randomized time, as well as Corollary \ref{Analog model to the Chirikov standard map}, which demonstrates exponential mixing for an analogous model to the Chirikov standard map.
	\begin{proof}[\textbf{Proof of Corollary \ref{The Pierrehumbert model with randomized time}}]
	 Let $f_1(p)=\sin p$ and $f_2(q)=\sin q$ be as defined in Section 1, we have $\mathcal{O}=\mathbb{T}^2\setminus\{0,\pi\}^2$. It is also clear that $C_{f_i}\cap C_{f_i'}=\emptyset$, so assumption $(H1)$ holds. Consequently, Theorem \ref{Uniformly geometrically ergodic} applies to this model.
	
	By a straightforward computation, the matrix 
	\begin{equation*}
		A(\hat{x}):=(\hat{X}_1(\hat{x}),\hat{X}_2(\hat{x}), [\hat{X}_1,\hat{X}_2](\hat{x}),[\hat{X}_1,[\hat{X}_1,\hat{X}_2]](\hat{x}) )
	\end{equation*}
	of the lifted vector fields 
	\begin{equation*}
		\hat{X}_1(\hat{x}):=(\sin p, 0, v\cos p, 0),\ \ \hat{X}_2(\hat{x}):=(0, \sin q, 0, u\cos q), 
	\end{equation*}
	and its Lie bracket 
	\begin{equation*}
		[\hat{X}_1,\hat{X}_2](\hat{x})=D\hat{X}_2(\hat{x})\hat{X}_1(\hat{x})-D\hat{X}_1(\hat{x})\hat{X}_2(\hat{x})=\begin{pmatrix}
			-\cos p\sin q,\\
			\cos q \sin p\\
			-u\cos p \cos q+v\sin p \sin q\\
			v\cos p\cos q- u\sin p\sin q 
		\end{pmatrix},
	\end{equation*}
	\begin{equation*}
		[\hat{X}_1,[\hat{X}_1,\hat{X}_2]](\hat{x})=\begin{pmatrix}
			-2 \cos p \cos q \sin p\\
			-\sin^2 p\sin q\\
			-2v\cos^2 p \cos q+2v\cos q \sin^2 p+u\sin 2p\sin q\\
			-\sin p(u\cos q\sin p+2v\cos p\sin q) 
		\end{pmatrix}
	\end{equation*}
	has rank $4$ for almost all $\hat{x}=(q,p,u,v)$ since $\det A(\hat{x})$ is a real analytic function and 
	\begin{equation*}
		\det A(\pi/3, \pi/3, \sqrt{2}/2, \sqrt{2}/2)=\frac{27}{128}\neq 0.
	\end{equation*}
	Then assumption $(H2)$ is satisfied. Consequently, Theorem \ref{Positivity of the top Lyapunov exponent} applies to this model.
	
	In this model, $\mathcal{O}^{(2)}:=\mathcal{O}\times \mathcal{O}\setminus \Delta$, where
	\[
	\Delta= \left\{ (q,p,q',p') \in \mathcal{O}\times \mathcal{O} \left| 
	\begin{aligned}
		(q', p') &= (q, p), \\
		\text{or }  (q', p')&= (\pi + q, \pi - p) \mod 2\pi\mathbb{Z}^2, \\
		\text{or } (q', p') &= (\pi - q, \pi + p) \mod 2\pi\mathbb{Z}^2, \\
		\text{or } (q', p') &= (2\pi - q, 2\pi - p) \mod 2\pi\mathbb{Z}^2
	\end{aligned}
	\right.
	\right\}.
	\]
	For any $(x,y)=(q,p,q_1,p_1)\in\mathcal{O}^{(2)}$,
	\begin{equation*}
		\tilde{X}_1(\hat{x}):=(\sin p, 0, \sin p_1, 0),\ \ \hat{X}_2(\hat{x}):=(0, \sin q, 0, \sin q_1), 
	\end{equation*}
    and its Lie bracket 
    \begin{align*}
    	[\tilde{X}_1,\tilde{X}_2](x,y)&=D\tilde{X}_2(x,y)\tilde{X}_1(x,y)-D\tilde{X}_1(x,y)\tilde{X}_2(x,y)\\
    	&=\begin{pmatrix}
    		-\sin q \cos p\\
    		\cos q\sin p\\
    		-\sin q_1 \cos p_1\\
    		\cos q_1\sin p_1
    	\end{pmatrix},
    \end{align*}
    \begin{equation*}
    	[[\tilde{X}_1,\tilde{X}_2],\tilde{X}_1](x,y)=\begin{pmatrix}
    		\cos q \sin(2p)\\
    		\sin q\sin^2 p\\
    		\cos q_1 \sin(2p_1)\\
    		\sin q_1\sin^2 p_1
    	\end{pmatrix}.                                                                      
    \end{equation*}
    Then, the matrix 
    \begin{equation*}
    	B(x,y):=(\tilde{X}_1(x,y),\tilde{X}_2(x,y), [\tilde{X}_1,\tilde{X}_2](x,y),[[\tilde{X}_1,\tilde{X}_2],\tilde{X}_1](x,y) )
    \end{equation*}
    has rank $4$ for almost all $(x,y)$ since $\det B(x,y)$ is a real analytic function and 
    \begin{equation*}
    	\det B(\pi/3, \pi/2, \pi/2, \pi/3)=\frac{\sqrt{3}}{2}\neq 0.
    \end{equation*}
    Then assumption $(H3)$ is satisfied. Consequently, Theorem \ref{Exponential mixing} applies to this model.
	\end{proof}

	\begin{proof}[\textbf{Proof of Corollary \ref{Analog model to the Chirikov standard map}}]
		Let $f_1(p)=\sin p$ and $f_2(q)=q$ be as defined in Section 1, we have $\mathcal{O}=\mathbb{T}^2\setminus \{(0,0), (0,\pi)\}$. It is also clear that $C_{f_i}\cap C_{f_i'}=\emptyset$, so assumption $(H1)$ holds. Consequently, Theorem \ref{Uniformly geometrically ergodic} applies to this model.
		
		By a straightforward computation, the matrix 
		\begin{equation*}
			A(\hat{x}):=(\hat{X}_1(\hat{x}),\hat{X}_2(\hat{x}), [\hat{X}_1,\hat{X}_2](\hat{x}),[\hat{X}_1,[\hat{X}_1,\hat{X}_2]](\hat{x}) )
		\end{equation*}
		of the lifted splitting vector fields 
		\begin{equation*}
			\hat{X}_1(\hat{x}):=(\sin p, 0, v\cos p, 0),\ \ \hat{X}_2(\hat{x}):=(0, q, 0, u), 
		\end{equation*}
		and its Lie bracket 
		\begin{equation*}
			[\hat{X}_1,\hat{X}_2](\hat{x})=\begin{pmatrix}
				-q\cos p\\
				\sin p\\
				-u\cos p+q v\sin p\\
				v\cos p
			\end{pmatrix},\quad
			[\hat{X}_1,[\hat{X}_1,\hat{X}_2]](\hat{x})=\begin{pmatrix}
				-2\cos p\sin p\\
				0\\
				-2v\cos(2p)\\
				0
			\end{pmatrix}
		\end{equation*}
		has rank $4$ for almost all $\hat{x}=(q,p,u,v)$ since $\det A(\hat{x})$ is a real analytic function and 
		\begin{equation*}
			\det A(\pi/3, \pi/3, \sqrt{2}/2, \sqrt{2}/2)=\frac{9-\sqrt{3}\pi}{16}\neq 0.
		\end{equation*}
		Then assumption $(H2)$ is satisfied. Consequently, Theorem \ref{Positivity of the top Lyapunov exponent} applies to this model.
		
		In this model, $\mathcal{O}^{(2)}:=\mathcal{O}\times \mathcal{O}\setminus \{(x,x):x\in\mathcal{O}\}$. For any $(x,y)=(q,p,q_1,p_1)\in\mathcal{O}^{(2)}$,
		\begin{equation*}
			\tilde{X}_1(\hat{x}):=(\sin p, 0, \sin p_1, 0),\ \ \hat{X}_2(\hat{x}):=(0, q, 0, q_1), 
		\end{equation*}
		and its Lie bracket 
		\begin{equation*}
			[\tilde{X}_1,\tilde{X}_2](x,y)=\begin{pmatrix}
				-q \cos p\\
				\sin p\\
				-q_1 \cos p_1\\
				\sin p_1
			\end{pmatrix},\quad
			[\tilde{X}_1,[\tilde{X}_1,\tilde{X}_2]](x,y)=\begin{pmatrix}
				-\sin(2p)\\
				0\\
				-\sin(2p_1)\\
				0
			\end{pmatrix}.                                                                      
		\end{equation*}
		Then, the matrix 
		\begin{equation*}
			B(x,y):=(\tilde{X}_1(x,y),\tilde{X}_2(x,y), [\tilde{X}_1(x,y),\tilde{X}_2(x,y)],[\tilde{X}_1,[\tilde{X}_1,\tilde{X}_2]](x,y))
		\end{equation*}
		has rank $4$ for almost all $(x,y)$ since $\det B(x,y)$ is a real analytic function and 
		\begin{equation*}
			\det B(\pi, \pi/2, \pi/2, \pi/3)=\frac{(3-\sqrt{3})\pi}{4}\neq 0.
		\end{equation*}
		Then assumption $(H3)$ is satisfied. Consequently, Theorem \ref{Exponential mixing} applies to this model.
	\end{proof}


\begin{thebibliography}{1234} 
		\bibitem{ACM} Alberti, G., Crippa, G. and Mazzucato, A.. Exponential self-similar mixing by incompressible flows. Journal of the American Mathematical Society, 2019, 32(2): 445-490.
		
		\bibitem{AMM} Agazzi, A., Mattingly, J. C. and Melikechi, O.. Random splitting of fluid models: unique ergodicity and convergence. Commun. Math. Phys., 2023, 401: 497–549.
		
		\bibitem{AMM2} Agazzi, A., Mattingly, J. C. and Melikechi, O.. Random splitting of fluid models: positive Lyapunov exponents. arXiv e-prints.
		
		\bibitem{AV} Avila, A., and Viana, M.. Extremal Lyapunov exponents: an invariance principle and applications.  Invent. Math., 2010, 181(1): 115-178.
		
		\bibitem{B} Baxendale, P. H.. Lyapunov exponents and relative entropy for a stochastic flow of diffeomorphisms. Probab. Th. Rel. Fields., 1989, 81: 521-554. 
		
		\bibitem{BBP2021} Bedrossian, J., Blumenthal, A. and Punshon-Smith, S.. Almost-sure enhanced dissipation and uniform-in-diffusivity exponential mixing for advection–diffusion by stochastic Navier–Stokes. Probability Theory and Related Fields, 2021, 179(3): 777-834.
		
		\bibitem{BBP2022} Bedrossian, J., Blumenthal, A. and Punshon-Smith, S.. Almost-sure exponential mixing of passive scalars by the stochastic Navier–Stokes equations. The Annals of Probability, 2022, 50(1): 241-303.
		
		\bibitem{BBP} Bedrossian, J., Blumenthal, A. and Punshon-Smith, S.. Lagrangian chaos and scalar advection in stochastic fluid mechanics. Journal of the European Mathematical Society, 2022, 24(6): 1893-1990.
		
		\bibitem{BBP22} Bedrossian, J., Blumenthal, A. and Punshon-Smith, S.. A regularity method for lower bounds on the Lyapunov exponent for stochastic differential equations. Invent. Math., 2022, 227(2): 429-516.
		
		\bibitem{BCZG} Blumenthal, A., Coti Zelati, M. and Gvalani, R. S.. Exponential mixing for random dynamical systems and an example of Pierrehumbert. Ann. Probab., 2023, 51(4): 1559-1601.
		
		\bibitem{BP} Bedrossian, J., and Punshon-Smith, S.. Chaos in Stochastic 2d Galerkin-Navier–Stokes. Commun. Math. Phys., 2024, 405(4), 107.
		
		\bibitem{BXY} Blumenthal, A., Xue, J. and Young, L. -S.. Lyapunov exponents for random perturbations of some area-preserving maps including the standard map. Ann. of Math. (2), 2017, 185(1): 285-310.
		
		\bibitem{Bo} Bohr, H.. Zur theorie der fast periodischen funktionen: I. Eine verallgemeinerung der theorie der fourierreihen. Acta Mathematica, 1925, 45(1): 29-127.
		
		\bibitem{Br} Bressan, A.. A lemma and a conjecture on the cost of rearrangements. Rendiconti del Seminario Matematico della Universita di Padova, 2003, 110: 97-102.
		
		\bibitem{BL} Bougerol, P., and Lacroix, J.. Products of Random Matrices with Applications to Schr\"odinger Operators. Progress in probability and statistics 8. Birkh\"auser, Boston, MA (1985).  
		
		\bibitem{C} Carverhill. A.. Furstenberg's theorem for nonlinear stochastic systems. Probab. Th. Rel. Fields, 1987, 74: 529-534. 
		
		\bibitem{CRWZ} Cheng, L. T., Rajasekaran, F., Wong, K. Y. and Zlato\v{s}, A.. Numerical evidence of exponential mixing by alternating shear flows. Communications in Mathematical Sciences, 2023, 21(2).
		
		\bibitem{Co} Cooperman, W.. Exponential mixing by shear flows. SIAM J. Math. Anal., 2023, 55(6):7513--7528.
		
		\bibitem{CZDE} Coti Zelati, C., Delgadino, M. G. and Elgindi, T. M.. On the relation between enhanced dissipation timescales and mixing rates. Communications on Pure and Applied Mathematics, 2020, 73(6): 1205-1244.
		
		\bibitem{CDL} Crippa, G. and De Lellis, C.. Estimates and regularity results for the DiPerna-Lions flow. J. Reine Angew. Math., 2008, 616: 15-46.
		
		\bibitem{CLS} Crippa, G., Luc\`a, R. and Schulze, C.. Polynomial mixing under a certain stationary Euler flow. Physica D: Nonlinear Phenomena, 2019, 394: 44-55.
		
		\bibitem{DZ} Da Prato, G. and Zabczyk, J.. Stochastic Equations in Infinite Dimensions. Cambridge university press (2014).
		
		\bibitem{DV} Donsker, M. D. and Varadhan, S. R. S.. Asymptotic evaluation of certain Markov process expectations for large time, I. Comm. Pure Appl. Math, 1975, 28(1): 1–47. 
		
		\bibitem{EZ} Elgindi, T. M. and Zlato\v{s}, A.. Universal mixers in all dimensions. Advances in Mathematics, 2019, 356: 106807.
		
		\bibitem{FI} Feng, Y. and Iyer, G.. Dissipation enhancement by mixing. Nonlinearity, 2019, 32(5): 1810.
		
		\bibitem{FH} Federer, H.. Geometric measure theory. Springer, Berlin(1996). 
		
		\bibitem{F} Furstenberg, H.. Noncommuting random products. Trans. Am. Math. Soc., 1963, 108: 377–428. 
		
		\bibitem{IKX} Iyer, G., Kiselev, A. and Xu, X.. Lower bounds on the mix norm of passive scalars advected by incompressible enstrophy-constrained flows. Nonlinearity, 2014, 27(5): 973.
		
		\bibitem{J} Jurdjevic, V.. Geometry control theory. Cambridge Studies in Advanced Mathematices. Cambridge University Press, Cambridge (1996).
		
		\bibitem{K} Kato, T.: Perturbation Theory for Linear Operators. Classics in Mathematics. Springer, Berlin (1995).
		
		\bibitem{KP} Kuchment, P.. An overview of periodic elliptic operators. Bulletin of the American Mathematical Society, 2016, 53(3): 343-414.
		
		\bibitem{L} Ledrappier, F.. Positivity of the exponent for stationary sequences of matrices. In Lyapunov Exponents (Bremen, 1984). Lecture Notes in Math., 1186: 56-73. Springer, Berlin (1986). 
		
		\bibitem{M} Mityagin, B. S.. The zero set of a real analytic function. Mathematical Notes, 2020, 107(3-4), 529-530.
		
		\bibitem{LTD} Lin, Z., Thiffeault, J. L. and Doering, C. R.. Optimal stirring strategies for passive scalar mixing. Journal of Fluid Mechanics, 2011, 675: 465-476.
		
		\bibitem{MT} Meyn, S. and Tweedie, R. L.: Markov Chains and Stochastic Stability, 2nd ed. Cambridge university press (2009).
		
		\bibitem{PC} Pesin, Y. and Climenhaga, V.. Open problems in the theory of non-uniform hyperbolicity. Discrete Contin. Dyn. Syst, 2010, 27(2): 589-607.
		
		\bibitem{P} Pierrehumbert, R.. Tracer microstructure in the large-eddy dominated regime. Chaos solitons fractals, 1994, 4(6): 1091-1110.
		
		\bibitem{PY} Plasting, S. C. and Young, W. R.. A bound on scalar variance for the advection–diffusion equation. Journal of Fluid Mechanics, 2006, 552: 289-298.
		
		\bibitem{P99} Provenzale, A.. Transport by coherent barotropic vortices. Annu. Rev. Fluid Mech., 1999, 31(1): 55-93.
		
		\bibitem{R} Royer, G.. Croissance exponentielle de produits markoviens de matrices al\'eatoires. Ann. Inst. Henri Poincar\'e Sect. B (N.S.), 1980, 16(1): 49-62.
		
		\bibitem{SC} Seis, C.. Maximal mixing by incompressible fluid flows. Nonlinearity, 2013, 26(12): 3279.
		
		\bibitem{SS} Shraiman, B. I. and Siggia, E. D.. Scalar turbulence. Nature, 2000, 405(6787): 639-646.
		
		\bibitem{T} Thiffeault, J. L.. Using multiscale norms to quantify mixing and transport. Nonlinearity, 2012, 25(2): R1.
		
		\bibitem{TDG} Thiffeault, J. L., Doering, C. R. and Gibbon, J. D.. A bound on mixing efficiency for the advection–diffusion equation. Journal of Fluid Mechanics, 2004, 521: 105-114.
		
		\bibitem{V} Vircer, A. D.. Matrix and operator random products. Teor. Veroyatn. Primen., 1979, 24: 361-370.
		
		\bibitem{W00} Warhaft, Z.. Passive scalars in turbulent flows. Annu. Rev. Fluid Mech., 2000, 32(1): 203-240.
		
		\bibitem{W} Wilkinson, A.. What are Lyapunov exponents, and why are they interesting? Bull. Am. Math. Soc., 2017, 54(1): 79-105.
		
		\bibitem{WF} Wunsch, C. and Ferrari, R.. Vertical mixing, energy, and the general circulation of the oceans. Annu. Rev. Fluid Mech., 2004, 36: 281-314.
		
		\bibitem{Y} Young, L.-S.. Mathematical theory of Lyapunov exponents. J. Phys. A: Math. Theor., 2013, 46(25): 254001.
		
		\bibitem{ZY} Zlatoš, A. and Yao, Y.. Mixing and un-mixing by incompressible flows. Journal of the European Mathematical Society, 2017, 19(7): 1911-1948.
		
	\end{thebibliography}
\end{document}